\newtheorem{theorem}{Theorem}[section]
\newtheorem{proposition}[theorem]{Proposition}
\newtheorem{lemma}[theorem]{Lemma}
\newtheorem{remark}[theorem]{Remark}
\newtheorem{corollary}[theorem]{Corollary}
\newcommand{\eqdef}{\overset{\mbox{\tiny{def}}}{=}}
\newcommand{\pel}{p}
\newcommand{\PL}{V}
\newcommand{\pZ}{\pel_0}
\newcommand{\vZ}{\PL_0}
\def\vh {\hat{\pel}}
\def\ph {\hat{\PL}}
\def\sing {(1+\hat{\pel}\cdot\omega)}
\def\ls {\lesssim}
\def\om {\omega}
\def\th {\theta}
\def\rd {\partial}
\def\Rt {\mathbb R^3}
\def\ep {\epsilon}
\def\nab {\nabla}
\newcommand{\nuD}{D}
\newcommand{\Dinit}{\mathcal{E}_{0,\nuD}}
\newcommand{\enerD}{\mathcal{E}_{T,\nuD}}
\newcommand{\iterD}{\tilde{\mathcal{E}}_{T,\nuD}^{(n)}}
\newcommand{\fowC}{\mathcal{F}}
\newcommand{\bakC}{\mathcal{B}}
\newcommand{\ba}{\begin{equation}}
\newcommand{\ea}{\end{equation}}
\newcommand{\bea}{\begin{eqnarray}}
\newcommand{\eea}{\end{eqnarray}}
\def\beaa{\begin{eqnarray*}}
\def\eeaa{\end{eqnarray*}}
\title[Strichartz estimates and moment bounds II: the 3D case]{Strichartz estimates and moment bounds for the relativistic Vlasov-Maxwell system II. Continuation criteria in the 3D case}
\author{Jonathan Luk}
\address{Department of Mathematics, MIT, Cambridge, MA 02139 and Department of Mathematics, University of Pennsylvania, Philadelphia, PA 19104}
\email{jluk@math.mit.edu}
\thanks{J.L. was partially supported by the NSF Postdoctoral Fellowship DMS-1204493.}
\author{Robert M. Strain}
\address{Department of Mathematics, University of Pennsylvania, Philadelphia, PA 19104}
\email{strain@math.upenn.edu}
\thanks{R.M.S. was partially supported by the NSF grant DMS-1200747.}
\begin{document}

\begin{abstract}
We consider the $3$-dimensional relativistic Vlasov-Maxwell system with data without compact support in momentum space. We prove two continuation criteria for solutions to this system. First, we show that a regular solution can be continued if the integral of the electromagnetic field along any characteristic is assumed to be bounded. This can be viewed as a generalization of the classical result of Glassey-Strauss \cite{GS86} to data with non-compact momentum support. Second, we extend the methods in our companion paper \cite{LSSE} to show that a regular solution can be extended as long as $\|\pel_0^{\th} f \|_{L^q_xL^1_{\pel}}$ remains bounded for $\th>\frac 2q$, $2<q\leq \infty$. This improves previous results of Pallard \cite{Pallard}.
\end{abstract}

\maketitle
%\tableofcontents

\section{Introduction}

We consider the initial value problem for the relativistic Vlasov-Maxwell system. Let the particle density $f:\mathbb R_t \times \mathbb R_x^3\times \mathbb R_\pel^3 \to \mathbb R_+$ be a non-negative function of time $t\in \mathbb R$, position $x\in \mathbb R^3$ and momentum $\pel\in\mathbb R^3$. Then $E,B:\mathbb R_t\times \mathbb R^3_x \to \mathbb R^3$ are time-dependent vector fields on the position space $\mathbb R^3$.

The relativistic Vlasov-Maxwell system is then given by
\bea
& &\rd_t f+\vh\cdot \nabla_x f+ (E+\vh\times B)\cdot \nabla_\pel f = 0,\label{vlasov}\\
& &\rd_t E= \nabla_x \times B- j,\quad \rd_t B=-\nabla_x\times E,\label{maxwell}\\
& &\nabla_x\cdot E=\rho,\quad \nabla_x \cdot B=0.\label{constraints}
\eea
where the charge is
$$\rho(t,x) \eqdef 4\pi\int_{\Rt} f(t,x,\pel) d\pel,$$
and the current is given by
$$
j_i(t,x) \eqdef  4\pi \int_{\Rt} \vh_i f(t,x,\pel) d\pel, \quad i=1,..., 3.
$$
We use the following definitions
\bea
\vh=\frac{\pel}{\pel_0}, \quad \pel_0=\sqrt{1+|\pel|^2}.\label{vh.def}
\eea
Notice that given smooth initial data $f_0$, $E_0$, $B_0$ which satisfy the constraint equations \eqref{constraints}, then as long as the solution remains regular, the solutions to the evolution equations \eqref{vlasov} and \eqref{maxwell} will obey the constraint equations \eqref{constraints}.

The Vlasov equation \eqref{vlasov} implies that the particle density $f$ is constant along the characteristics $(X,V)$, which satisfy the following ordinary differential equations:
\bea\label{char1}
\frac{d {X}}{ds}(s;t,x,\pel)=\hat{V}(s;t,x,\pel),
\eea
\bea\label{char2}
\frac{dV}{ds}(s;t,x,\pel)= E(s,X(s;t,x,\pel))+\hat{V}(s;t,x,\pel)\times B(s,X(s;t,x,\pel)),
\eea
together with the conditions  
\bea
X(t;t,x,\pel)=x,\quad V(t;t,x,\pel)=\pel.\label{char.data}
\eea
We will estimate solutions along the characteristics.  Next we define the notation.

\subsection{Notation}\label{sec.notation.3D}
Define $\nab_x\eqdef (\frac{\partial}{\partial x^1}, \frac{\partial}{\partial x^2}, \frac{\partial}{\partial x^3})$ and 
$\nab_\pel \eqdef (\frac{\partial}{\partial \pel^1}, \frac{\partial}{\partial \pel^2}, \frac{\partial}{\partial \pel^3})$. For a scalar function $g$, we utilize the notation
$$
|\nab_x g|^2\eqdef \left(\frac{\partial g}{\partial x^1}\right)^2+\left(\frac{\partial g}{\partial x^2}\right)^2
+
\left(\frac{\partial g}{\partial x^3}\right)^2.
$$
We similarly define $|\nab_{\pel} g|$. 
For an integer $k$, we will use the notation $\nab_{x,\pel}^k$ schematically to denote 
$$
\nab_{x,\pel}^k g\eqdef
\left(
\partial_{x^1}^{\alpha_1}\partial_{x^2}^{\alpha_2}\partial_{x^3}^{\alpha_3}
\partial_{\pel^1}^{\beta_1}\partial_{\pel^2}^{\beta_2}\partial_{\pel^3}^{\beta_3} g
\right)_{|\alpha |+|\beta |=k},
$$
where $\alpha = (\alpha_1,\alpha_2,\alpha_3)$ and $\beta =(\beta_1,\beta_2,\beta_3)$ are standard multi-indicies.    
This notation above denotes a vector which contains all components that satisfy the condition $|\alpha |+|\beta |=k$.  
Then $\nab_{x}^k$ and $\nab_{\pel}^k$ are defined similarly with only the $x$ or $p$ derivatives respectively.  We further use $|\nab_{x,\pel}^k g|^2$ to denote the square sum of all $k$-th order derivatives:
$$
|\nab_{x,\pel}^k g|^2\eqdef \sum_{|\alpha |+|\beta |=k}
\left( \partial_{x^1}^{\alpha_1}\partial_{x^2}^{\alpha_2}\partial_{x^3}^{\alpha_3}
\partial_{\pel^1}^{\beta_1}\partial_{\pel^2}^{\beta_2}\partial_{\pel^3}^{\beta_3} g\right)^2.
$$
Again $|\nab_{x}^k g|$ and $|\nab_{\pel}^k g|$ are defined similarly.

We then define the Lebesgue spaces for scalar functions $g$ by
$$
\|g\|_{L^s([0,T);L^q_x L^r_{\pel})}\eqdef 
\left(\int_0^T \big(\int_{\mathbb R^3} \big(\int_{\mathbb R^{3}} |g|^r \,d\pel\big)^{\frac qr}\,dx\big)^{\frac sq} \,dt\right)^{\frac 1s}, 
$$
with the obvious standard modifications when $s$, $q$ or $r=\infty$. In addition, for a vector valued function $G=(G_1, \ldots,G_m)$, we  define the Lebesgue spaces in exactly the same manner except now
$
| G |^2  \eqdef \sum_{i=1}^m \left| G_i \right|^2
$
in the above definition. We also define the Sobolev spaces for both scalar valued and vector valued functions for
$H^D_x = H^D(dx)= H^D(\mathbb{R}^3_x)$ by
$$\|g\|_{H^D_x}^2\eqdef \sum_{0\leq k\leq D}\int_{\mathbb R^3_x} |\nab_{x}^k g|^2\, dx.$$
In particular, for a vector-valued function $G=(G_1, \ldots,G_m)$  we use the convention  that $\nab^k G$ is itself a vector that contains all derivatives of order $k$ of all components of the vector $G$.

We will use the following notation for the momentum weight
\begin{equation}\label{weight.notation.3D}
w_3(\pel) \eqdef \pel_0^{3/2}\log(1+\pel_0).
\end{equation}
The weight allows us to define the weighted Sobolev space $H^D(w_{3}(\pel)^2 \,d\pel\,dx) = H^D(w_{3}(\pel)^2 \mathbb{R}^3_x \times \mathbb{R}^3_\pel)$ by
$$\|g\|_{H^D(w_{3}(\pel)^2\, d\pel\, dx)}^2\eqdef \sum_{0\leq k\leq D}\int_{\mathbb R^3_x}\int_{\mathbb R^{3}_{\pel}} |\nab_{x,\pel}^k g|^2 w_{3}(p)^2\, d\pel\, dx.$$
The space $L^\infty([0,T);H^D(w_{3}(\pel)^2\, d\pel\, dx)$ is then defined by making suitable standard modifications.

We will use the notation $K\eqdef (E,B)$ for the electromagnetic  fields,
$\tilde{K}\eqdef E+\vh\times B$ for the Lorentz force,
 and $K_0\eqdef (E_0,B_0)$ for the initial data of the fields.

We also use the notation $A \ls B$ to mean that $A\le CB$ where the implicit non-negative constant, $C$, may depend on any of the conserved quantities in the conservation laws Section \ref{sec.cons.law}, on the initial data and it can also depend upon the time $T_*>0$.  We may slightly alter this notation at the beginning of some sections below to be used in a precise way within certain sections.

\subsection{Main Results}\label{sec.main.results.3D}

The global existence of classical solutions given sufficiently regular finite energy initial data for the $3$-dimensional relativistic Vlasov-Maxwell system remains an open problem. An outstanding result of Glassey-Strauss \cite{GS86} provides a continuation criterion for solutions arising from initial data with compact momentum support. It shows that the solution remains $C^1$ as long as the momentum support of $f$ remains bounded: 

\begin{theorem}[Glassey-Strauss \cite{GS86}]\label{GS.theorem}
Consider initial data $(f_0(x,\pel),E_0(x),B_0(x))$ which satisfies the constraints \eqref{constraints} such that $f_0\in C^1_c(\mathbb R^3_x\times \mathbb R^3_\pel)$, $E_0, B_0 \in C^2(\mathbb R^3_x)$. 
Assume that there exists a continuous function $\kappa(t):[0,\infty)\to\mathbb R_+$ such that $f$ obeys
\bea\label{GScriterion.0}
f(t,x,\pel)=0\quad\mbox{for }|\pel|\geq \kappa(t), \quad \forall x \in \mathbb{R}^3
\eea
and all approximations $f^{(n)}(t,x,\pel)$ satisfy the same bounds (see the definition for $f^{(n)}$ below in equations \eqref{vlasov.l}-\eqref{maxwell.l.2}).
Then, there exists a unique $C^1$ global solution to the relativistic Vlasov-Maxwell system.
\end{theorem}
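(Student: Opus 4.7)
The plan is to implement the classical Glassey-Strauss strategy on the iterative approximation scheme $(f^{(n)}, E^{(n)}, B^{(n)})$ defined in \eqref{vlasov.l}--\eqref{maxwell.l.2}: since the support bound \eqref{GScriterion.0} is assumed to hold uniformly in $n$, it suffices to propagate $C^1$ regularity uniformly on any $[0,T]$ and pass to the limit. The argument splits into three stages---bounding $\|K^{(n)}\|_{L^\infty}$, bounding $\|\nab_{x,\pel} f^{(n)}\|_{L^\infty}$, and finally taking a limit.

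For the first stage, transport along characteristics gives $\|f^{(n)}(t)\|_{L^\infty} \le \|f_0\|_{L^\infty}$, and then \eqref{GScriterion.0} yields $\|\rho^{(n)}(t)\|_{L^\infty} + \|j^{(n)}(t)\|_{L^\infty} \ls (1+\kappa(t))^3$. Each component of $K^{(n+1)}$ satisfies a linear wave equation with source linear in $(\rho^{(n)}, j^{(n)})$ and its first derivatives, obtained from \eqref{maxwell} and \eqref{constraints}. Writing $K^{(n+1)}(t,x)$ via the Kirchhoff formula as an integral over the truncated backward light cone $|y-x|\le t$ and estimating in $L^\infty$ gives $\|K^{(n+1)}(t)\|_{L^\infty} \ls 1 + \int_0^t (1+\kappa(s))^{A}\, ds$ with an explicit exponent $A$, uniformly in $n$.

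The second stage is the heart of the proof and the main obstacle. Since $\nab_{x,\pel} f^{(n)}$ along characteristics is driven by $\nab_x K^{(n)}$, I need a bound on $\|\nab_x K^{(n+1)}\|_{L^\infty}$ in terms of $\sup_{s\le t}\|\nab_{x,\pel} f^{(n)}(s)\|_{L^\infty}$ that closes by Gr\"onwall. The naive differentiation of the Kirchhoff representation produces kernels of size $|x-y|^{-2}$ integrated against $\nab_{x,\pel} f^{(n)}$, which is logarithmically borderline and formally loses a momentum derivative. The Glassey-Strauss miracle is the algebraic splitting of the kernel gradient into a piece tangential to the Vlasov characteristic field $\rd_t + \vh\cdot\nab_x$ (the ``T'' piece) plus a piece transverse to the light cone (the ``S'' piece), exploiting that the denominator factor $\sing$ with $\om = (y-x)/|y-x|$ stays bounded below by a positive multiple of $\pel_0^{-2}$ on the support of $f^{(n)}$. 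Integrating the tangential derivatives by parts via \eqref{vlasov} converts the offending $\nab_\pel f^{(n)}$ into time and angular derivatives together with boundary and lower-order contributions, producing
\ba\label{GS.sketch.K}
\|\nab_x K^{(n+1)}(t)\|_{L^\infty} \ls 1 + \int_0^t (1+\kappa(s))^{A'} \log\bigl(e + \|\nab_{x,\pel} f^{(n)}(s)\|_{L^\infty}\bigr)\, ds.
\ea
Combined with the characteristic ODEs \eqref{char1}--\eqref{char.data}, a Gr\"onwall argument applied inductively in $n$ closes uniform-in-$n$ $C^1$ bounds on $(f^{(n)}, K^{(n)})$ throughout $[0,T]$.

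Armed with these uniform $C^1$ bounds, I would finish by showing that $(f^{(n)}, K^{(n)})$ is Cauchy in $L^\infty$ through the same Kirchhoff/characteristic representations applied to differences of consecutive iterates, pass to the limit to obtain a $C^1$ solution of \eqref{vlasov}--\eqref{constraints}, and conclude uniqueness by a direct characteristic-based comparison and Gr\"onwall. The delicate point is entirely the Glassey-Strauss kernel decomposition: its structural cancellations (and the nonvanishing of $\sing$ on the support of $f$) are precisely what allow \eqref{GS.sketch.K} to close at the $C^1$ level without demanding additional regularity of the data.
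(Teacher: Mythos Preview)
Your outline follows the Glassey--Strauss strategy sketched in the paper (Theorem~\ref{GS.theorem} itself is only cited from \cite{GS86}, not proved here). Two points need correction. First, your stage one does not close as written: the wave-equation source for $K$ is $-\nabla_x\rho-\partial_t j$ (and the curl analogue for $B$), and a raw Kirchhoff estimate in $L^\infty$ would require control of $\nabla_x f$, which is unavailable at this point. The Glassey--Strauss integration by parts is needed \emph{already here} to obtain the representation $K=(K)_0+K_T+K_S$ of Section~\ref{sec.dec.K}; this gives (cf.\ \eqref{GS.apriori.2}) a bound of the form $\|K^{(n+1)}(t)\|_{L^\infty}\ls 1+\int_0^t(1+\|K^{(n)}(s)\|_{L^\infty})(1+\kappa(s))^3\,ds$, which then closes by induction in $n$ and Gronwall.

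Second, in your stage two the roles of $S$ and $T$ are reversed relative to \cite{GS86} and Section~\ref{sec.derv.decomp.K}: it is $S=\partial_t+\vh\cdot\nabla_y$ that is the Vlasov characteristic direction (so $Sf^{(n+1)}=-\tilde{K}^{(n)}\cdot\nabla_\pel f^{(n+1)}$, after which one integrates by parts in $\pel$), while the vectors $T_i=-\omega_i\partial_t+\partial_{y^i}$ are tangent to the light cone and are integrated by parts along the cone. With these two corrections your outline matches the paper's sketch \eqref{GS.apriori.1}--\eqref{GS.apriori.6} and the original argument of \cite{GS86}.
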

We can reformulate this as the following continuation criterion:
\begin{theorem}[Glassey-Strauss]
Assume the same conditions as Theorem \ref{GS.theorem}. Let $(f,E,B)$ be the unique solution to \eqref{vlasov}-\eqref{constraints} in $[0,T_*)$. Assume that there exists a bounded continuous function $\kappa(t):[0,T_*)\to\mathbb R_+$ such that $f$ obeys \eqref{GScriterion.0} and all approximations $f^{(n)}(t,x,\pel)$ satisfy the same bound \eqref{GScriterion.0} (see the definition for $f^{(n)}$ below in equations \eqref{vlasov.l}-\eqref{maxwell.l.2}).
Then, there exists $\epsilon>0$ such that the solution extends uniquely in $C^1$ beyond $T_*$ to an interval $[0,T_*+\epsilon]$.
\end{theorem}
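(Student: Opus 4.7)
The plan is to convert the hypothesis \eqref{GScriterion.0} on the momentum support into uniform $C^1$ estimates for $(f, E, B)$ on the entire interval $[0, T_*)$, and then to apply the standard local existence/iteration theory (in which the approximations $f^{(n)}$ are controlled by the assumed support bound) to extend the solution past $T_*$.

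Since $f$ is transported along the characteristics \eqref{char1}--\eqref{char.data} by a divergence-free flow on $\mathbb R^3_x\times\mathbb R^3_\pel$, we have $\|f(t,\cdot,\cdot)\|_{L^\infty}=\|f_0\|_{L^\infty}$, so the support bound \eqref{GScriterion.0} immediately yields
\[
|\rho(t,x)|+|j(t,x)|\lesssim \int_{|\pel|\le \kappa(t)} f(t,x,\pel)\,d\pel \lesssim \kappa(t)^3\|f_0\|_{L^\infty}\lesssim 1
\]
uniformly on $[0, T_*)\times\mathbb R^3_x$. The Glassey-Strauss representation for $K=(E,B)$ writes the fields along a backward light cone as an integral of $f$ against a kernel with at most a $|x-y|^{-2}$ singularity, which, combined with the support bound on $\pel$, produces $\|K\|_{L^\infty([0,T_*)\times\mathbb R^3_x)}\lesssim 1$ and hence $\|\tK\|_{L^\infty}\lesssim 1$.

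The main obstacle, and the heart of the original Glassey-Strauss argument, is to promote this $L^\infty$ bound on $K$ to an $L^\infty$ bound on $\nab_x K$. Differentiating the representation formula naively costs a full $x$-derivative on $f$, and setting up a Gronwall inequality for $\nab_{x,\pel} f$ obtained from differentiating \eqref{vlasov} then demands an a priori bound on $\nab_x K$ that is precisely what one is trying to prove. The Glassey-Strauss resolution is to rewrite the dangerous source term: using \eqref{vlasov} itself, derivatives of $f$ falling on the kernel can be re-expressed as tangential derivatives along the characteristic flow, which upon integration by parts trade in for lower-order terms multiplied by the (already bounded) $K$ and for angular derivatives of the kernel. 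Within the compactly supported momentum region $\{|\pel|\le \kappa(t)\}$ this trade produces only a logarithmic factor, and closing a Gronwall estimate along characteristics yields uniform bounds on $\nab_{x,\pel} f$, $\nab_x K$, and $\nab_x \tK$ on $[0, T_*)$.

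Once $(f, E, B)$ is bounded in $C^1$ uniformly on $[0, T_*)$, the equations \eqref{vlasov}--\eqref{maxwell} also supply uniform bounds on the time derivatives, so for any sequence $t_n\uparrow T_*$ the triples $(f(t_n), E(t_n), B(t_n))$ form a Cauchy sequence in $C^1_c(\mathbb R^3_x\times\mathbb R^3_\pel)\times C^1(\mathbb R^3_x)^2$, with limit $(f_*, E_*, B_*)$ that still satisfies the constraints \eqref{constraints} (since these are propagated by \eqref{maxwell}) and remains compactly supported in $\pel$ by the uniform bound $\kappa$. Feeding this triple into the standard local existence theorem, whose iterative approximations are controlled by the hypothesized bound \eqref{GScriterion.0}, produces a $C^1$ solution on $[T_*, T_* + \epsilon]$ for some $\epsilon>0$; gluing to the existing solution and invoking the standard uniqueness result for $C^1$ solutions to \eqref{vlasov}--\eqref{constraints} gives the unique extension claimed.
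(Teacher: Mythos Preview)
Your outline follows the same Glassey--Strauss strategy that the paper sketches in \S1.3.1: use the compact momentum support to bound the moment integrals, apply the Glassey--Strauss representation and the $S/T$ integration-by-parts decomposition to close a logarithmic Gronwall loop between $\nab_x K$ and $\nab_{x,\pel} f$, and then extend.  Two points are glossed over in a way that matters.

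First, your claim that the representation immediately yields $\|K\|_{L^\infty}\lesssim 1$ is incomplete: the $K_S$ piece of the Glassey--Strauss decomposition involves $\tilde K f$, not $f$ alone, so one actually has an inequality of the form
\[
\|K(t)\|_{L^\infty_x}\lesssim \mbox{Data}+\int_0^t (1+\|K(s)\|_{L^\infty_x})\Big\|\int \pel_0^3 f(s)\,d\pel\Big\|_{L^\infty_x}\,ds,
\]
and the $L^\infty$ bound on $K$ only follows after a Gronwall step (compare \eqref{GS.apriori.2}--\eqref{GS.apriori.4} in the paper).

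Second, and more substantively, your extension mechanism differs from the one in \cite{GS86} and, as written, has a gap.  You propose to take a limit $(f_*,E_*,B_*)$ at $t=T_*$ in $C^1$ and restart the local existence theorem there.  But the Glassey--Strauss local theory (Theorem~\ref{GS.theorem}) requires the initial fields to lie in $C^2$, not merely $C^1$; your argument only produces $C^1$ bounds on $K$ up to $T_*$.  Moreover, the hypothesis that the approximations $f^{(n)}$ obey \eqref{GScriterion.0} refers to the iteration launched from $t=0$, not from $T_*$, so it does not directly control a restarted iteration.  The way \cite{GS86} actually closes the argument---and the reason the assumption on the $f^{(n)}$ is stated as it is---is not by restarting but by showing that the \emph{same} $C^1$ estimates hold uniformly in $n$ for the full iteration on $[0,T_*]$, whence the sequence converges in $C^1$ to a solution on the closed interval (and a bit beyond).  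Your sketch should either follow that route or supply the missing $C^2$ control on $K$ at $T_*$.
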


\begin{remark}
Different proofs of Theorem \ref{GS.theorem} have subsequently been given in \cite{BGP} and \cite{KS}. We note that \cite{KS} introduced a new approach to the problem based on Fourier analysis. See additionally \cite{LS} for an improvement of Theorem \ref{GS.theorem}. For special regimes where the Glassey-Strauss condition can be guaranteed and that global existence is known, we refer the readers to \cite{GSnearneutral}, \cite{GS2.5D}, \cite{GS2D1}, \cite{GS2D2}, \cite{GSdilute}, \cite{Rein}, \cite{S}. See also our companion paper \cite{LSSE}.
Also for the non-relativistic Vlasov-Poisson system global regularity has been established 
in \cite{Pfaffelmoser,LP,Schaeffer}.
\end{remark}

For initial data compactly supported in momentum space, it can easily be seen from \eqref{char2} that the Glassey-Strauss criterion is implied\footnote{In fact, as noted in \cite{KS}, these two criteria are equivalent as a result of the Glassey-Strauss Theorem.} by
\begin{equation}
\sup_{t\in [0,T_*),x,\pel\in\mathbb R^3} \int_0^{T_*} ds  \left(|E(s,X(s;t,x,\pel))|+|B(s,X(s;t,x,\pel))|\right) <\infty.\label{GScriterion.K}
\end{equation}
This latter condition \eqref{GScriterion.K} makes sense also when $f$ is not compactly supported in momentum space. Our first main theorem just below shows that this is sufficient to guarantee that the solution can be continued \emph{even when the initial momentum support is not compact}.

\begin{theorem}\label{main.theorem.1}
Let $(f_0(x,\pel),E_0(x),B_0(x))$ be an initial data set which satisfies the constraints \eqref{constraints} and such that $f_0\in H^4(w_3(\pel)^2\mathbb R^3_x\times\mathbb R^3_\pel)$ is non-negative and obeys the bounds
\bea\label{ini.bd.2.5}
\sum_{0\leq k\leq 4}\| \left( \nab^k_{x,\pel} f_0 \right) w_{3}\|_{L^2_x L^2_\pel}<\infty,
\eea
\bea\label{ini.bd.3}
\|\int_{\mathbb R^3} \sup\{f_0(x+y,\pel+w) \pel_0^3:\, |y|+|w|\leq R\}\, d\pel\|_{L^\infty_x} \leq C_R,
\eea
\bea\label{ini.bd.4}
\|\int_{\mathbb R^3} \sup\{|\nabla_{x,\pel} f_0|(x+y,\pel+w) \pel_0^3:\, |y|+|w|\leq R\}\, d\pel\|_{L^\infty_x} \leq C_R,
\eea
\begin{equation}\label{ini.bd.5}
\|\int_{\mathbb R^3}\sup\{|\nabla_{x,\pel} f_0|^2(x+y,\pel+w) w_{3}^2 :\,|y|+|w|\leq R\}\, d\pel\|_{L^\infty_x}
  \leq C_R^2,
\end{equation}
and
\bea\label{ini.bd.5.5}
\|\int_{\mathbb R^3}\sup\{|\nabla_{x,\pel}^2 f_0|(x+y,\pel+w) \pel_0:\,|y|+|w|\leq R\} d\pel\|_{L^\infty_x}  \leq C_R,
\eea
for some different constants $C_R <\infty$ for every $R>0$; and the initial electromagnetic fields $E_0, B_0 \in H^4(\mathbb R^3_x)$ obey the bounds
\bea\label{ini.bd.6}
\sum_{0\leq k\leq 4}(\|\nab_x^k E_0 \|_{L^2_x}+\|\nab_x^k B_0 \|_{L^2_x})<\infty.
\eea
Given this initial data set, there exists a unique local solution $(f,E,B)$ on $[0,T_{loc}]$ such that $E,B\in L^\infty([0,T_{loc}];H^4(\mathbb R^3_x))$ and  $f\in L^\infty([0,T_{loc}];H^4(w_3(\pel)^2\mathbb R^3_x\times\mathbb R^3_{\pel}))$. Moreover, if a solution exists in the time interval $[0,T_*)$ and the bound \eqref{GScriterion.K} holds, then the solution can be extended uniquely to $[0,T_*+\ep]$ for some $\ep>0$ such that $E,B\in L^\infty([0,T_*+\ep];H^4(\mathbb R^3_x))$ and  $f\in L^\infty([0,T_*+\ep];H^4(w_3(\pel)^2\mathbb R^3_x\times\mathbb R^3_{\pel}))$.
\end{theorem}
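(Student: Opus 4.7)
The plan is to establish the theorem in two stages: a standard local existence theorem in the weighted space $H^4(w_3(\pel)^2\,d\pel\,dx)\times H^4_x$, and an a priori estimate up to time $T_*$ under the assumption \eqref{GScriterion.K} strong enough to allow the local theory to extend the solution past $T_*$. For local existence I would run a Picard iteration $(f^{(n+1)},E^{(n+1)},B^{(n+1)})$ in which $f^{(n+1)}$ solves the linear Vlasov equation along the characteristics of $K^{(n)}=(E^{(n)},B^{(n)})$ and $(E^{(n+1)},B^{(n+1)})$ solves the inhomogeneous Maxwell system with sources $(\rho^{(n+1)},j^{(n+1)})$ built from $f^{(n+1)}$. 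The weight $w_3(\pel)=\pel_0^{3/2}\log(1+\pel_0)$ is chosen so that the momentum integrations defining $\rho$ and $j$ converge by Cauchy--Schwarz against the weighted $L^2_\pel$ norm, and higher regularity of the sources follows from commuting up to four derivatives with \eqref{vlasov}. The hardest commutators in the resulting energy estimates, of the form $\nab^k K\cdot \nab_{\pel}f$, are handled via H\"older combined with the Sobolev embedding $H^2_x\hookrightarrow L^\infty_x$ valid in three dimensions. Contraction of the iteration in the lower-order norm $H^3(w_3^2\,d\pel\,dx)\times H^3_x$ yields a unique fixed point on a short interval $[0,T_{loc}]$.

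For the continuation part, set
\[
M\eqdef \sup_{t\in[0,T_*),\,x,\pel}\int_0^{T_*}\!|K|(s,X(s;t,x,\pel))\,ds,
\]
which is finite by hypothesis. Integrating \eqref{char2} along any characteristic immediately gives $|V(s;t,x,\pel)-\pel|\le M$ for all $s\in[0,T_*)$, so the conservation law $f(t,x,\pel)=f_0(X(0;t,x,\pel),V(0;t,x,\pel))$ together with the assumptions \eqref{ini.bd.3}--\eqref{ini.bd.5.5}, which are stated as suprema over balls of radius $R$, transfer directly to the time-$t$ bounds
\[
\left\|\int_{\Rt}\sup_{|y|+|w|\le R}f(t,x+y,\pel+w)\pel_0^3\,d\pel\right\|_{L^\infty_x}\lesssim C_{R+M},
\]
and the analogous inequalities for $\nab_{x,\pel}f$ and $\nab_{x,\pel}^2 f$. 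In particular the charge and current $\rho,j$ and their first two spatial derivatives are bounded in $L^\infty_{t,x}$ on $[0,T_*)\times\Rt$.

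The heart of the argument is to upgrade \eqref{GScriterion.K}, which only controls a time integral along characteristics, to a pointwise bound on $K$ itself. For this I would use the Glassey--Strauss representation writing $E$ and $B$ on the backward light cone as an integral of $f$ and $\nab_{x,\pel}f$ against a kernel carrying the singular factor $\sing^{-1}$ with $\om=(y-x)/|y-x|$. Splitting the $\pel$-integral into the region $1+\vh\cdot\om\lesssim \pel_0^{-2}$ and its complement, applying the angular integration-by-parts of \cite{GS86} to trade a factor of $\sing^{-1}$ for a derivative of $f$, and inserting the pointwise bounds just established (where the $\pel_0^3$ weight exactly matches the volume factor from the kernel expansion near $\sing=0$) leads to $\|K\|_{L^\infty_t L^\infty_x}<\infty$ and, after one more derivative, $\|\nab_x K\|_{L^\infty_{t,x}}<\infty$; the $\log(1+\pel_0)$ in $w_3$ is precisely what absorbs the logarithmic endpoint that arises in the $\pel$-integral. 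With $\|K\|_{W^{1,\infty}_{t,x}}$ under control, a Gronwall argument on $\|f\|_{H^4(w_3^2\,d\pel\,dx)}^2+\|K\|_{H^4_x}^2$ using the equations commuted with up to four derivatives closes the top-order estimate, and the local existence theorem applied at a time slightly less than $T_*$ extends the solution.

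The main obstacle is the third step above. In \cite{GS86} compact momentum support kills the $\pel$-integral outright, but in the present non-compact setting one has to match the $\pel_0^3$ decay provided by \eqref{ini.bd.3}--\eqref{ini.bd.5.5} to the precise singular behavior of the Glassey--Strauss kernel, with no room to spare. This is the quantitative reason the theorem uses the specific weight $w_3=\pel_0^{3/2}\log(1+\pel_0)$ and the specific $\pel_0^3$ weights in the initial data assumptions, and correctly implementing this balance for both $K$ and $\nab_x K$ is the technical core of the proof.
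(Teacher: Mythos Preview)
Your local existence sketch is essentially the paper's, and the observation that \eqref{GScriterion.K} bounds the momentum drift $|V(s;t,x,\pel)-\pel|$ along characteristics is correct. But the continuation argument has two genuine gaps.

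First, the claim that \eqref{ini.bd.3}--\eqref{ini.bd.5.5} ``transfer directly'' to time-$t$ bounds for $\nabla_{x,\pel}f$ and $\nabla_{x,\pel}^2 f$ is false as written: differentiating $f(t,x,\pel)=f_0(X(0;t,x,\pel),V(0;t,x,\pel))$ produces Jacobian factors $\nabla_{x,\pel}(X,V)(0;t,x,\pel)$, and these are \emph{not} a priori bounded by \eqref{GScriterion.K}. The paper introduces
\[
\bakC(t)=1+\sup_{s\leq t,\,x,\pel}\big(|\nabla_{x,\pel}X(0;s,x,\pel)|+|\nabla_{x,\pel}V(0;s,x,\pel)|\big)
\]
and its forward analogue $\fowC(t)$, obtains $\int\pel_0^3|\nabla_{x,\pel} f|\,d\pel\lesssim\bakC(t)$, and then runs a \emph{coupled} argument: the Glassey--Strauss decomposition of $\nabla_x K$ (after splitting the cone into $\{s<t-\delta\}$ and $\{s\ge t-\delta\}$ and optimizing in $\delta$) yields $\|\nabla_x K(t)\|_{L^\infty_x}\lesssim 1+\log\bakC(t)$, while differentiating the characteristic ODE's gives $\fowC(t)\lesssim 1+\int_0^t (1+\|\nabla_x K(s)\|_{L^\infty_x})\fowC(s)\,ds$. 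Together with $\bakC\lesssim\fowC^5$ (Cramer's rule, using that the flow has unit Jacobian) this closes via the log-Gronwall $\fowC\lesssim 1+\int_0^t \fowC(1+\log\fowC)\,ds$. This coupling is the technical core, and your sketch omits it entirely. Note also that the logarithm arises from the $(t-s)^{-3}$ singularity in the $\nabla_xK_{TT}$ term integrated over the truncated cone, not from the $\pel$-integral, and it is resolved by the Gronwall just described; the weight $w_3$ plays no role at this step.

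Second, $W^{1,\infty}$ control of $K$ does \emph{not} suffice to close the $H^4$ Gronwall: commuting four derivatives through \eqref{vlasov} produces the term $w_3\,\nabla_x^2 \tilde K\cdot\nabla_\pel\nabla_{x,\pel}^2 f$, and with $D=4$ neither factor can be placed in $L^2_x$ while estimating the other in $L^\infty_x$ without exceeding the available regularity. The paper's continuation criterion (Proposition~\ref{cont.crit.prop}) accordingly requires $\|\nabla_x^2 K\|_{L^1_tL^\infty_x}$, and Section~\ref{sec.pf.GS} repeats the coupled scheme above one order higher---introducing second-derivative characteristic bounds $\fowC_1,\bakC_1$ and using the representation of Proposition~\ref{d2Kdecomposition}---to obtain it. Your proposal stops one derivative short.
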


\begin{remark}
As we will show below (see Theorem \ref{theorem.local.existence}), if the initial data is moreover in a weighted $H^D$ space for $D\geq 4$ (as opposed to only being in a weighted $H^4$ space), then the solution remains in the same space as long as \eqref{GScriterion.K} holds. If $D\geq 5$, then by Sobolev embedding theorem, the equations \eqref{vlasov}, \eqref{maxwell} and \eqref{constraints} can be understood classically.
\end{remark}

Using the Glassey-Strauss Theorem \ref{GS.theorem}, the following improved continuation criterion is known for the $3$-dimensional relativistic Vlasov-Maxwell system:

\begin{theorem}[Pallard \cite{Pallard}, Sospedra--Alfonso-Illner \cite{AI}]\label{Pallard.theorem}
Let $(f_0,E_0,B_0)$ be initial data on $\mathbb R^3$ satisfying the assumptions in Theorem \ref{GS.theorem}. Let $(f,E,B)$ be the unique classical solution to \eqref{vlasov}-\eqref{constraints} in $[0,T_*)$. Assume that 
\begin{equation}\label{M.th.q}
M_{\th,q} \eqdef  ||\pel_0^{\theta} f||_{L^\infty([0,T_*);L^q_x L^1_\pel)}<+\infty
\end{equation}
for some $\th>\frac 4q$, $6\leq q\leq +\infty$ \cite{Pallard} or $\th=0$, $q=+\infty$ \cite{AI}. Then, there exists $\epsilon>0$ such that the solution extends uniquely and classically beyond $T_*$ to an interval $[0,T_*+\epsilon]$.
\end{theorem}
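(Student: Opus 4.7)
The plan is to reduce this improved continuation criterion to the Glassey--Strauss criterion of Theorem \ref{GS.theorem} by showing that the moment bound $M_{\th,q}$ forces the momentum support of $f$ to remain bounded on $[0,T_*)$. From \eqref{char2} and \eqref{char.data} one has
\[
|V(s;t,x,\pel)| \leq |\pel| + \int_0^{t}\bigl(|E|+|B|\bigr)\bigl(\tau, X(\tau;t,x,\pel)\bigr)\, d\tau,
\]
so it suffices to bound the time integral of $|K|=|(E,B)|$ along characteristics uniformly in the base point, after which Theorem \ref{GS.theorem} produces the extension.

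To control this time integral I would invoke the Glassey--Strauss representation of $K$, which writes $K(t,x)$ as an initial-data contribution plus a ``$T$-term'' of the schematic form
\[
K^T(t,x) = \int_0^{t}\!\! \int_{\mathbb{S}^2}\!\! \int_{\Rt} \frac{a(\vh,\om)}{(t-\tau)^2\, \sing^2}\, f\bigl(\tau, x+(t-\tau)\om, \pel\bigr)\, d\pel\, d\om\, d\tau,
\]
together with an ``$S$-term'' carrying derivatives of $f$ tangential to the backward light cone, weighted by the less singular factor $(t-\tau)^{-1}$ after an integration by parts. With $P(t):=\sup\{|V(\tau;s,y,\pel)|\}$ taken over characteristics emanating from $\mathrm{supp}(f_0)$ for $0\le\tau\le s\le t$, the strategy is to split the $\pel$-integral into dyadic shells in $\pel_0$ and to split the angular variable according to the size of $\sing$. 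The near-parallel region where $\sing\approx 0$ has small spherical measure, and that smallness compensates part of the growth from the weight $\pel_0$. Combined with H\"older's inequality in $x$ applied against the density $\int \pel_0^{\th} f\, d\pel \in L^q_x$, this should yield a pointwise bound
\[
|K|(t,x) \ls 1 + M_{\th,q}\, P(t)^{\gamma(\th,q)},
\]
with $\gamma(\th,q)<1$ precisely in the regime $\th > 4/q$, $q\geq 6$. Inserting this bound into the characteristic inequality for $|V|$ and applying Gronwall then produces a finite a priori bound on $P$ up to $T_*$.

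The main obstacle is the delicate splitting required to treat the $\sing^{-2}$ singularity in the Glassey--Strauss kernel together with the momentum weight: the gain in angular measure from the near-parallel region must be balanced against the loss in $\pel_0$ coming from the large-momentum shells, and this bookkeeping is what pins down the admissible thresholds. A secondary difficulty is the $S$-term, where one must integrate by parts on the null cone so that no derivatives of $f$ appear in the final estimate, yet without introducing compensating powers of $\pel_0$ large enough to violate the moment hypothesis. The restriction $q\geq 6$ reflects the borderline integrability produced by the three-dimensional null geometry in the $x$-space step of the H\"older estimate; weakening it requires the Strichartz-type refinements from the companion paper \cite{LSSE} rather than the elementary approach just described.
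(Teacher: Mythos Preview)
This theorem is quoted from \cite{Pallard} and \cite{AI}; the present paper does not give its own proof, but the introduction (around \eqref{phy.sp.bd.0}) does sketch Pallard's argument, and your proposal diverges from that sketch in a way that looks like a genuine gap rather than an alternative route.

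The essential missing idea is that Pallard does \emph{not} establish a pointwise bound of the form $|K|(t,x)\ls 1+M_{\th,q}P(t)^{\gamma}$. Your $L^q_x$ hypothesis gives control of $\int \pel_0^{\th}f\,d\pel$ as a function on $\mathbb R^3_x$, but the Glassey--Strauss cone representation integrates this density over \emph{spheres} $\{|y-x|=t-s\}$, and there is no trace inequality allowing you to bound $\int_{|y-x|=t-s} g(y)\,dS$ by $\|g\|_{L^q_x(\mathbb R^3)}$ for finite $q$. Your dyadic decomposition in $\pel_0$ and the angular splitting in $\sing$ handle only the $(\pel,\om)$ variables; they do nothing to convert the sphere integral in $y$ into something that pairs with an $L^q_x$ norm. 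Pallard's key step is precisely to cure this: he first integrates $K$ along a characteristic $s'\mapsto X(s')$, so that the quantity of interest becomes a triple integral in $(s',s,\om)$, and then performs the change of variable $(s',\om)\mapsto y=X(s')+(s'-s)\om$. Because the characteristic is strictly timelike this map is a diffeomorphism onto a region of $\mathbb R^3$, with Jacobian $(1+\hat X'(s')\cdot\om)(s'-s)^2\sin\th$; the resulting integral is over a $3$-dimensional set in $y$ and now H\"older against $L^q_x$ is legitimate. The factor $(1-\sup_s|X'(s)|)^{\alpha}\sim P^{-2\alpha}$ in \eqref{phy.sp.bd.0} comes from the Jacobian, and the balancing of this loss against the gain from the moment hypothesis is what produces the threshold $\th>4/q$, $q\ge 6$.

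A second issue is your description of the $S$-term. After the Glassey--Strauss integration by parts the $S$-term contains no derivatives of $f$ at all; it carries instead the Lorentz force $\tilde K f$ (see Proposition~\ref{T.prop}). The genuine difficulty there is the extra factor of $K$, which Pallard controls using the conserved $L^2$ norm of the field together with another application of the change-of-variable trick, not by a further integration by parts on the null cone.
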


\begin{remark}
We note that a precursor of this result was first obtained by Glassey-Strauss in \cite{GS87}, \cite{GS87.2} for the $\th=1$, $q=+\infty$ case. This specific case also has the physical significance of being the kinetic energy density. Moreover, in \cite{GS87.2}, Glassey-Strauss showed that this is a sufficient continuation criterion even in the case where the initial momentum support is not required to be compact and that $f$ is only assumed to decay polynomially as $|\pel|\to\infty$.
\end{remark}
The theorem of Pallard immediately implies, via standard interpolation inequalities (see Proposition \ref{prop.interpolation.0} below), that we have the following continuation criterion for $q< 6$:
\begin{corollary}\label{Pallard.cor}
Let $(f_0,E_0,B_0)$ be initial data on $\mathbb R^3$ satisfying the assumptions in Theorem \ref{GS.theorem}. Let $(f,E,B)$ be the unique solution to \eqref{vlasov}-\eqref{constraints} in $[0,T_*)$. Assume that \eqref{M.th.q} holds for some $\th>\frac {22}{q}-3$, $1\leq q< 6$. Then, there exists $\epsilon>0$ such that the solution extends uniquely and classically beyond $T_*$ to an interval $[0,T_*+\epsilon]$.
\end{corollary}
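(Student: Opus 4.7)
The plan is to reduce to Pallard's theorem (Theorem \ref{Pallard.theorem}) by an interpolation in momentum that trades an unfavorable $L^q_x$ exponent ($q<6$) for a more favorable one ($q'\geq 6$), at the cost of lowering the momentum weight. Since $f$ is transported along characteristics \eqref{char1}--\eqref{char2}, it satisfies the a priori bound $\|f(t,\cdot,\cdot)\|_{L^\infty_{x,\pel}}\leq \|f_0\|_{L^\infty_{x,\pel}}$ for all $t\in[0,T_*)$. This furnishes the second endpoint for the interpolation.

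The key pointwise inequality, which is what I expect Proposition \ref{prop.interpolation.0} provides, is the following: for $0\leq \theta' \leq \theta$, splitting the $\pel$-integral at $\pel_0=R$, bounding $f$ by its $L^\infty_\pel$ norm on the low-momentum piece and using $\pel_0^{\theta'-\theta}\leq R^{\theta'-\theta}$ on the high-momentum piece, and then optimizing in $R$, one obtains
\[
\int_{\Rt}\pel_0^{\theta'} f(t,x,\pel)\,d\pel \ls \|f(t,\cdot,\cdot)\|_{L^\infty_{x,\pel}}^{(\theta-\theta')/(\theta+3)} \left(\int_{\Rt}\pel_0^{\theta} f(t,x,\pel)\,d\pel\right)^{(\theta'+3)/(\theta+3)}.
\]
Raising to the $L^{q'}_x$ norm and absorbing the $L^\infty_{x,\pel}$ factor, this yields
\[
\|\pel_0^{\theta'} f\|_{L^\infty([0,T_*);L^{q'}_x L^1_{\pel})} \ls \|\pel_0^{\theta} f\|_{L^\infty([0,T_*);L^{Q}_x L^1_{\pel})}^{(\theta'+3)/(\theta+3)},\qquad Q=\frac{q'(\theta'+3)}{\theta+3},
\]
with an implicit constant depending on $\|f_0\|_{L^\infty}$.

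Given the hypothesis \eqref{M.th.q} with $1\leq q<6$ and $\theta>\tfrac{22}{q}-3$, I would now select the target exponents $q'=6$ and
\[
\theta' \eqdef \frac{q(\theta+3)}{6}-3,
\]
so that the matching condition $Q=q$ holds. A direct computation shows $\theta>\tfrac{22}{q}-3$ is equivalent to $\theta'>\tfrac{2}{3}=\tfrac{4}{q'}$, and the required $0\leq \theta'\leq \theta$ follows from $q\leq 6$ together with the lower bound on $\theta$. The displayed interpolation inequality therefore produces
\[
M_{\theta',q'} = \|\pel_0^{\theta'} f\|_{L^\infty([0,T_*);L^6_x L^1_{\pel})} < \infty,
\]
with $\theta'>\tfrac{4}{6}$, which is exactly the hypothesis of Theorem \ref{Pallard.theorem}. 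Applying that result yields the desired extension.

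The arguments are all routine once the interpolation inequality is in hand, so there is no genuine obstacle; the only real content is the algebraic bookkeeping matching the borderline Pallard exponent $q'=6$, $\theta'=\tfrac{2}{3}$ with the hypothesis line $\theta=\tfrac{22}{q}-3$. The choice of $q'=6$ (rather than any larger value) is what minimizes the required weight $\theta$, and in particular makes the bound on $\theta$ continuous with Pallard's bound at the endpoint $q=6$.
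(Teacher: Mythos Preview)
Your proposal is correct and matches the paper's approach exactly: the paper simply states that the corollary follows from Pallard's theorem via the interpolation inequality of Proposition~\ref{prop.interpolation.0}, and your argument spells out precisely this reduction, including the optimal choice $q'=6$ that produces the threshold $\theta>\tfrac{22}{q}-3$. The pointwise splitting-and-optimizing argument you give is indeed the standard proof of Proposition~\ref{prop.interpolation.0} (carried out in the companion paper \cite{LSSE}), so there is no meaningful difference in method.
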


Our second main theorem for the $3$-dimensional relativistic Vlasov-Maxwell system is the following continuation criterion, which in particular improves\footnote{Notice however that so far our method does not recover the end-point case of \cite{AI}.} Theorem \ref{Pallard.theorem} for the full range of $2< q<\infty$:

\begin{theorem}\label{main.theorem.2}
Let $(f_0,E_0,B_0)$ be initial data on $\mathbb R^3$ satisfying the assumptions \eqref{ini.bd.2.5}-\eqref{ini.bd.6} in Theorem \ref{main.theorem.1}. Assume in addition that we have
\bea\label{ini.bd.2}
\|f_0 \pel_0^N\|_{L^1_x L^1_\pel}\leq C_N<\infty,\quad\mbox{for all }N.
\eea
Let $(f,E,B)$ be the unique solution to \eqref{vlasov}-\eqref{constraints} in $[0,T_*)$. Assume that  \eqref{M.th.q} is satisfied
for some $\th>\frac{2}{q}$, $2< q \leq \infty$. Then, there exists $\epsilon>0$ such that the solution extends uniquely beyond $T_*$ to an interval $[0,T_*+\epsilon]$ such that $E,B\in L^\infty([0,T_*+\ep];H^4(\mathbb R^3_x))$ and  $f\in L^\infty([0,T_*+\ep];H^4(w_3(\pel)^2\mathbb R^3_x\times\mathbb R^3_{\pel}))$.
\end{theorem}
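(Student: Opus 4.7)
The plan is to reduce Theorem \ref{main.theorem.2} to Theorem \ref{main.theorem.1} by showing that the moment bound \eqref{M.th.q} with $\theta>2/q$ and $2<q\leq\infty$, combined with the polynomial momentum decay \eqref{ini.bd.2}, implies the integral continuation criterion \eqref{GScriterion.K}. Once this implication is established, Theorem \ref{main.theorem.1} immediately delivers the desired extension of $(f,E,B)$ past $T_*$ in the required regularity class, so the entire work lies in verifying
\[\sup_{(t,x,\pel)}\int_0^{T_*}|K(s,X(s;t,x,\pel))|\,ds<\infty.\]

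To control $K$, I would exploit the fact that each component of $K=(E,B)$ satisfies an inhomogeneous 3D wave equation with source built from $\rho$, $j$ and their spatial derivatives. Using $|j|\leq \rho$ together with H\"older in $\pel$ applied to $\int f\,d\pel=\int \pel_0^{\theta}f\cdot \pel_0^{-\theta}\,d\pel$, and propagating \eqref{ini.bd.2} along the Vlasov flow, one obtains an $L^\infty_t L^r_x$ bound for $\rho$ and $j$ whose exponent $r$ is determined by $\theta$ and $q$. Feeding this into a Strichartz estimate for the 3D wave operator with a Strichartz-admissible pair $(p,r)$ satisfying $\frac{1}{p}+\frac{1}{r}=\frac{1}{2}$ yields a mixed-norm bound $\|K\|_{L^p_t L^r_x}<\infty$. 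The threshold $\theta>2/q$ is precisely what the Strichartz scaling requires, and the improvement over Pallard's $\theta>4/q$ comes from the $\frac12$-derivative Strichartz gain over the pointwise $L^\infty_x L^\infty_t$ representation used in \cite{Pallard}.

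The remaining and most delicate step is to convert the mixed-norm bound $\|K\|_{L^p_t L^r_x}$ into the pointwise-in-characteristic $L^1_t$ bound required by \eqref{GScriterion.K}. I would follow the tube/wave-packet strategy developed in the companion paper \cite{LSSE}: for each $(t,x,\pel)$, foliate a thin spacetime tube around the characteristic $s\mapsto X(s;t,x,\pel)$ by nearby characteristics, estimate the Strichartz norm of $K$ on the tube, and then use the Lipschitz control of $\nabla_x K$ inherited from the $H^4$ regularity (via Sobolev embedding) to shrink the tube down and isolate the single characteristic at the cost of an acceptable error. This is where the main difficulty lies: aligning the tube geometry, prescribed by Vlasov characteristics of speed $|\vh|<1$, with the null-cone geometry of the Strichartz estimates, and keeping track of the derivative losses so that they can be absorbed by the $H^4$ regularity of $K$ propagated through Theorem \ref{main.theorem.1}. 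The endpoints $q=\infty$, $\theta=0$ treated in \cite{AI} and $q=2$ lie outside this approach because the associated Strichartz pairs are non-endpoint here, and reaching them would require Keel--Tao endpoint Strichartz estimates together with a refined tube argument.
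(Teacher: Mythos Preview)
Your reduction to Theorem \ref{main.theorem.1} is correct, but the mechanism you propose for establishing \eqref{GScriterion.K} has two genuine gaps. First, the wave equation for $K$ has source $\nabla_x\rho+\partial_t j$ (and $-\nabla_x\times j$), which carries a derivative; your H\"older argument bounds $\rho$ and $j$ themselves, not their derivatives, so feeding the source directly into a Strichartz estimate loses one derivative you never recover. The paper addresses this not by Strichartz on $\Box K$, but by the Glassey--Strauss representation $K=(K)_0+K_T+K_S$ (Proposition \ref{GS.rep.schem}), which after integration by parts on the cone expresses $K$ as $\Box^{-1}$ applied to quantities involving only $f$ and $|K|f$ with $\pel_0$ weights (Propositions \ref{3D.kt.ep.prop}, \ref{kS.trivial.bd}); Strichartz is then applied to these. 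Second, the $K_S$ piece is genuinely nonlinear in $K$, and ``propagating \eqref{ini.bd.2} along the Vlasov flow'' is not free: moments $\|\pel_0^N f\|_{L^1_xL^1_\pel}$ are controlled only through $\|K\|_{L^1_tL^{N+3}_x}^{N+3}$ (Proposition \ref{prop.moment}). The paper closes this loop by a bootstrap (Propositions \ref{KT.3.prop}--\ref{propagation.moment}) showing both quantities are bounded simultaneously, with the exponent condition $\theta>2/q$ emerging from the interpolation in Proposition \ref{prop.interpolation.2}.

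The ``tube/wave-packet strategy'' you attribute to \cite{LSSE} does not appear there, and your proposed implementation is circular: you invoke Lipschitz control of $\nabla_x K$ from the $H^4$ regularity, but uniform $H^4$ bounds up to $T_*$ are precisely what is being established---if you had them, Theorem \ref{theorem.local.existence} would finish the proof directly. The paper instead converts the Strichartz bound $\|K\|_{L^1_tL^{N+3}_x}\lesssim 1$ into the characteristic integral \eqref{GScriterion.K} via Pallard's physical-space lemma (Proposition \ref{Pallard.prop}), which bounds $\int_0^{T_*}|K(s,X(s))|\,ds$ by $\|Kf\pel_0\|_{L^1_tL^4_xL^1_\pel}+\|f\pel_0\|_{L^1_tL^4_xL^1_\pel}$ through an explicit change of variables on the backward cone; these $L^4_x$ norms are then controlled by the moment bounds already obtained (Proposition \ref{KLinftybd}).
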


\begin{remark}
As we will show in the proof, Theorem \ref{main.theorem.2} can be strengthened slightly as follows. For every fixed $(\th,q)$ satisfying $\th>\frac{2}{q}$, $2< q \leq \infty$, the exists $N_*=N_*(\th,q)$ such that the assumption \eqref{ini.bd.2} can be replaced by 
\beaa
\|f_0 \pel_0^{N_*}\|_{L^1_x L^1_\pel}\leq C<\infty.
\eeaa
\end{remark}

Theorem \ref{main.theorem.2} implies, by standard interpolation inequalities (see Proposition \ref{prop.interpolation.0} below), that we have the following continuation criterion for $1\le q\leq 2$:

\begin{corollary}\label{main.corollary}
Let $(f_0,E_0,B_0)$ be initial data on $\mathbb R^3$ satisfying \eqref{ini.bd.2} and the assumptions in Theorem \ref{main.theorem.1}. Let $(f,E,B)$ be the unique solution to \eqref{vlasov}-\eqref{constraints} in $[0,T_*)$. Assume that  \eqref{M.th.q} holds for some $\th>\frac {8}{q}-3$, $1\leq q\leq 2$. Then, there exists $\epsilon>0$ such that the solution extends uniquely beyond $T_*$ to an interval $[0,T_*+\epsilon]$ such that $E,B\in L^\infty([0,T_*+\ep];H^4(\mathbb R^3_x))$ and  $f\in L^\infty([0,T_*+\ep];H^4(w_3(\pel)^2\mathbb R^3_x\times\mathbb R^3_{\pel}))$.
\end{corollary}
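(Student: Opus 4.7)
The plan is to reduce Corollary \ref{main.corollary} to Theorem \ref{main.theorem.2} by upgrading the assumed bound $M_{\th,q}$ for $1\leq q\leq 2$ to a bound $M_{\th',q'}$ lying in the range handled by Theorem \ref{main.theorem.2}, namely $q'>2$ and $\th'>2/q'$. The one additional a priori estimate needed is the transport-based bound $\|f(t,\cdot,\cdot)\|_{L^\infty_{x,\pel}}\leq \|f_0\|_{L^\infty_{x,\pel}}$, which is immediate from the Vlasov equation \eqref{vlasov} because $f$ is conserved along characteristics. Once an estimate of this form is produced in the target range, Theorem \ref{main.theorem.2} is invoked as a black box; this is the only way I can see that routes around the fact that, for $q\leq 2$, one cannot simply use mass/energy bounds to jump up to $L^{q'}_x$ with $q'>2$ via Riesz--Thorin.

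The interpolation itself is the standard layer-cake split in $\pel$. For each fixed $x$ and parameter $P>0$, decomposing $\int f\pel_0^{\th'}\,d\pel$ into the regions $\{\pel_0\leq P\}$ and $\{\pel_0>P\}$ and bounding them respectively by $\|f\|_{L^\infty}P^{\th'+3}$ and by $P^{\th'-\th}\int f\pel_0^{\th}\,d\pel$ (the latter valid for $\th>\th'$), an optimization over $P$ yields the pointwise estimate
\[
\int f\pel_0^{\th'}\,d\pel(x)\;\lesssim\;\|f\|_{L^\infty}^{(\th-\th')/(\th+3)}\Bigl(\int f\pel_0^{\th}\,d\pel(x)\Bigr)^{(\th'+3)/(\th+3)}.
\]
Taking the $L^{q'}_x$ norm and matching spatial exponents through the relation $q'(\th'+3)/(\th+3)=q$ then produces
\[
M_{\th',q'}\;\lesssim\;\|f_0\|_{L^\infty}^{(\th-\th')/(\th+3)}\,M_{\th,q}^{(\th'+3)/(\th+3)}.
\]

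A direct algebraic check shows that the matching $q'(\th'+3)=q(\th+3)$ combined with $\th'>2/q'$ rearranges to $q'<(q\th+3q-2)/3$, and the upper endpoint of this window exceeds $2$ exactly when $\th>8/q-3$, which is precisely the hypothesis of the corollary. Choosing any $q'$ in the resulting nonempty interval $\bigl(2,(q\th+3q-2)/3\bigr)$ one also finds $\th'<\th$, so the truncation is consistent, and applying Theorem \ref{main.theorem.2} to the pair $(\th',q')$ yields the continuation of the solution in the stated regularity class. No substantive obstacle is expected here, since all of the hard analysis already resides in Theorem \ref{main.theorem.2}; this corollary is merely its interpolated consequence. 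The only minor bookkeeping point is that applying the theorem requires the assumption \eqref{ini.bd.2} on moments of the initial data, but this is already part of the hypotheses of the corollary.
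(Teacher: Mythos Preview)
Your proof is correct and follows essentially the same approach as the paper: the paper deduces the corollary from Theorem \ref{main.theorem.2} via the interpolation inequality in Proposition \ref{prop.interpolation.0}, and your layer-cake argument is exactly how that proposition is proved, with the same matching condition $q'(\th'+3)=q(\th+3)$ and the same algebraic verification that $\th>8/q-3$ makes the target window $(2,(q\th+3q-2)/3)$ nonempty.
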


\subsection{Strategy of proof}
In this subsection we will discuss the main strategies of our proof.  First we will give a brief overview of the Glassey-Strauss result \cite{GS86}.  Then afterwards we discuss the proof of our Theorem \ref{main.theorem.1}, emphasizing the differences with the original work \cite{GS86}.  Then we outline the proof of our improved continuation criterion in Theorem \ref{main.theorem.2}; we consider this theorem to be the main novelty of this paper.  

\subsubsection{The Glassey-Strauss Theorem}
Let us first briefly recall the approach in \cite{GS86} to obtain $C^1$ bounds of the electromagnetic fields under the assumption of bounded momentum support. In \cite{GS86}, the bounded momentum support assumption \eqref{GScriterion.0} was used to obtain the \emph{a priori} bounds
\bea\label{GS.apriori.1}
\int_{\mathbb R^3} \pel_0^k f d\pel\ls C_k ,
\quad 
\int_{\mathbb R^3} \pel_0^k |\nab_{x,\pel}f| d\pel \ls C_k \|\nab_{x,\pel}f \|_{L^\infty_xL^\infty_{\pel}},
\eea
for every $k\geq 0$, where $C_k$ is a positive constant that depends upon $k$.

Then Glassey-Strauss introduced a clever integration by parts in the representation formula of the electromagnetic field and its derivatives, which allowed them to show that
\bea\label{GS.apriori.2}
\|K\|_{L^\infty_t([0,t);L^\infty_x)}\ls \int_0^{t} \|K(s)\|_{L^{\infty}_x}\left\|\int_{\mathbb R^3} \pel_0^3 f(s) d\pel \right\|_{L^\infty_x} ds,
\eea
and
\begin{equation}\label{GS.apriori.3}
\begin{split}
&\|\nab_x K\|_{L^\infty_t([0,t);L^\infty_x)}\\
\ls &\int_0^{t} (\|K(s)\|_{L^\infty_x}+\|\nab_x K(s)\|_{L^{\infty}_x}) \left\|\int_{\mathbb R^3} \pel_0^3 f(s) d\pel \right\|_{L^\infty_x} ds\\
&\qquad+\int_0^{t}\log \left(1+\|\int_{\mathbb R^3} \pel_0^3 \nab_{x,\pel} f(s) d\pel\|_{L^\infty_x}\right) ds.
\end{split}
\end{equation}
Then \eqref{GS.apriori.1} and \eqref{GS.apriori.2} imply via Gronwall's inequality that
\bea\label{GS.apriori.4}
\|K\|_{L^\infty_t([0,T);L^\infty_x)}\ls 1,
\eea
which, when combined with \eqref{GS.apriori.1} and \eqref{GS.apriori.3}, imply that 
\begin{equation*}
\begin{split}
\|\nab_x K\|_{L^\infty_t([0,t);L^\infty_x)}
\ls &1+\int_0^{t} \left(\|\nab_x K(s)\|_{L^{\infty}_x}+\log\left(1+\|\nab_{x,\pel} f(s) \|_{L^\infty_xL^\infty_{\pel}}\right)
\right) ds.
\end{split}
\end{equation*}
By Gronwall's inequality,
one then gets
\begin{equation}\label{GS.apriori.5}
\begin{split}
\|\nab_x K\|_{L^\infty_t([0,t);L^\infty_x)}
\ls &1+\int_0^{t} \log\left(1+\|\nab_{x,\pel} f(s) \|_{L^\infty_xL^\infty_{\pel}}\right) ds.
\end{split}
\end{equation}
On the other hand, by differentiating \eqref{vlasov}, one sees that $\|\nab_{x,\pel} f(t) \|_{L^\infty_xL^\infty_{\pel}}$ can be controlled when integrating along a characteristic by
\bea\label{GS.apriori.6}
\|\nab_{x,\pel} f(t) \|_{L^\infty_xL^\infty_{\pel}}\ls 1+\int_0^t \left(1+\|\nab_x K(s)\|_{L^{\infty}_x}\right)\|\nab_{x,\pel} f(s)\|_{L^\infty_xL^\infty_{\pel}} ds.
\eea
Substituting in the bounds for $\nab_x K$ from above, we get that both $\nab_{x,\pel}f$ and $\nab_xK$ are bounded. Finally, using the equations \eqref{vlasov} and \eqref{maxwell}, one sees directly that $\partial_t f$ and $\partial_t K$ are also bounded\footnote{The orginal argument of \cite{GS86} in fact also derived a representation formula for $\partial_t K$, which allowed them to estimate $\partial_t K$ at the same time as controlling $\nab_xK$.}. 

In \cite{GS86}, Glassey-Strauss showed moreover that if \eqref{GScriterion.0} is also assumed for all the approximates of $f$, the above argument can in fact show that the approximate solutions converge in $C^1$ to a solution to the relativistic Vlasov-Maxwell system.

\subsubsection{Proof of Theorem \ref{main.theorem.1}}
Recall that our goal in Theorem \ref{main.theorem.1} is to obtain two improvements over the Glassey-Strauss result. First, we generalize the result to the case where the initial data do not necessarily have compact support in momentum space. Second, we also remove the assumptions on the approximate solutions and only require \eqref{GScriterion.K} to hold for the actual solution itself.

We begin our proof with a local existence result via standard energy estimates. The methods allow for a very general class of initial data that does not require the assumption of compact initial momentum support. Moreover, existence and uniqueness of solutions are shown via iteration in an $L^2$ based space (more precisely, a weighted $H^4$ norm). It therefore avoids the need to perform iteration in $C^1$ and circumvents the assumptions needed for the approximate solutions as was required in the Glassey-Strauss argument. Instead, we will only need the boundedness of the $C^2$ norm of the solution as a \emph{continuation criterion}, i.e., we show that as long as the $C^2$ norm is finite, the solution can be continued in the weighted $H^4$ norm.

Once we have the local existence result, the problem is thus reduced to obtaining $C^2$ control of the solution assuming only \eqref{GScriterion.K}. To this end, we rely on the Glassey-Strauss decomposition of the solution. The main observation is that while in our case we lose the estimates in \eqref{GS.apriori.1}, we can replace them by
\bea\label{GS.apriori.1.1}
\int_{\mathbb R^3} \pel_0^3 f d\pel\ls 1,\quad \int_{\mathbb R^3} \pel_0^3 |\nab_{x,\pel}f| d\pel\ls \fowC(t),
\eea
as long as we assume \eqref{ini.bd.3} and \eqref{ini.bd.4} for the initial data.
Here $\fowC(t)$ is the sum of the supremums of the first partial derivatives in $x$ and $\pel$ of the forward characteristics $X(t; 0, x, \pel)$ and $V(t; 0, x, \pel)$; $\fowC$ is precisely defined in \eqref{fowC.def}.  This is because the solution obeys finite speed of propagation in space and our main assumption \eqref{GScriterion.K} implies that the $\pel$ difference along any characteristic is uniformly bounded on any finite time interval.

This observation immediately implies that \eqref{GS.apriori.4} also holds in our case. However, \eqref{GS.apriori.5} has to be replaced by
\begin{equation*}
\begin{split}
\|\nab_x K\|_{L^\infty_t([0,t);L^\infty_x)}
\ls &1+\int_0^{t} \log \fowC(s) ds.
\end{split}
\end{equation*}
On the other hand, 
we also have the following replacement of \eqref{GS.apriori.6}:
$$\fowC(t)\ls 1+\int_0^t \left(1+\|\nab_x K(s)\|_{L^{\infty}_x}\right)\fowC(s) ds.$$
These allow us to conclude that $K$ is $C^1$ as in \cite{GS86}. Finally, we apply a similar argument to obtain the bounds for the second derivatives of $K$. By the continuation criterion proved in the local existence theorem, the fact that the $C^2$ norm of $K$ is bounded implies that the solution remains in $H^4$. This completes the proof.

\subsubsection{Improved continuation criterion (Proof of Theorem \ref{main.theorem.2})}
Recall that by Theorem \ref{main.theorem.1}, we will only need to control
\begin{equation}\label{goal.quantity}
\left\| \int_0^{T_*} |K(s,X(s;t,x,p))| ds\right\|_{L^\infty_t L^\infty_x L^\infty_\pel}.
\end{equation}
Since $K$ satisfies a wave equation, standard properties of solutions to the wave equation imply that its integral along timelike curve has better regularity compared to fixed time estimates. This fact has been exploited in \cite{Pallard} and \cite{AI} in the proof of Theorem \ref{Pallard.theorem}. In particular, by directly integrating the physical space representation of the solution and performing a change of variable, Pallard showed an estimate of the type 
\begin{multline}\label{phy.sp.bd.0}
\int_0^{T_*} |K(s,X(s))| ds
\ls (1-\sup_s|\frac{d}{d s}X(s)|)^{\alpha_1}\| Kf\pel_0\|_{L^\infty_t([0,T_*);L^{q_1}_xL^1_{\pel})}\\
+(1-\sup_s|\frac{d}{d s}X(s)|)^{\alpha_2}\| f\pel_0\|_{L^\infty_t([0,T_*);L^{q_2}_xL^1_{\pel})}.
\end{multline}
He then showed that for appropriate parameters $\alpha_1$, $q_1$, $\alpha_2$ and $q_2$ in this estimate, the right hand side is controlled by $P^\alpha$ for some $\alpha<1$ where $P$ controls the supremum of the momentum support of $f$. In deriving this estimate, the electromagnetic field $K$ on the right hand side is controlled using the $L^2$ conservation law. Returning to the ODE's for the characteristics, this implies that $P$ is bounded and concludes the proof of Theorem \ref{Pallard.theorem}.

To proceed, we notice that another way to estimate the integral of $K$ along the characteristics is to obtain the stronger bound of $K$ in $L^2_tL^\infty_x$. The advantage of attempting to derive such an estimate is that we can potentially use this stronger bound to control $K$ via an application of Gronwall's inequality. However, in three dimensions, it is well-known that the $L^2_tL^\infty_x$ end-point Strichartz estimate is \emph{false}, i.e., the following inequality \emph{does not hold}:
$$
\|\Box^{-1} F\|_{L^2_t([0,T_*);L^\infty_x)}\ls \|F\|_{L^1_t([0,T_*);L^2_x)},
$$
where $\Box^{-1}F$ is defined to be the solution $u$ to $\Box u=F$ with zero initial data.
Instead, we can only use the following replacement that has a loss, i.e., we have
\begin{equation}\label{Strichartz.intro}
\|\Box^{-1} F\|_{L^{q_1}_t([0,T_*);L^{r_1}_x)}\ls \|F\|_{L^1_t([0,T_*);L^{r_2'}_x)},
\end{equation}
for some for $q_1=2+$, $r_1=\infty-$ and $r_2'=2+$. Nevertheless, we will show that this Strichartz estimate can be combined with the moment bounds for $f$ to obtain Theorem \ref{main.theorem.2}.

For simplicity of exposition, in the introduction we only discuss the ideas in the proof of Theorem \ref{main.theorem.2} in the endpoint cases $q=2$ and $q=\infty$. The actual argument\footnote{We note that in the proof of Theorem \ref{main.theorem.2}, we in fact require $2<q\leq +\infty$. The endpoint case of $q=2$ is then retrieved by standard interpolation (see Corollary \ref{main.corollary}).} interpolates between these two endpoints but we will refer the readers to the main text for the details. Moreover, in the introduction, we will not be precise about the values of the exponents in order not to obscure the essential ideas of the argument.

We first consider the case $q=2$. Recall that our goal is to control \eqref{goal.quantity}. To estimate this quantity, 
first we use the Glassey-Strauss representation of the electromagnetic field and then estimate the kernel via interpolation to show that for any small $\ep>0$, 
we have
\begin{equation}\label{K.bound.intro}
K \ls (K)_0+\Box^{-1}(|K|\int_{\mathbb R^3} \pel_0 f d\pel)+(\Box^{-1}(\int_{\mathbb R^3} \pel_0^{3+\ep} f d\pel)^{1+\frac{\ep}{2}})^{\frac 1{2+\ep}},
\end{equation}
where $(K)_0$ denotes a term that depends only on the initial data.  Then \eqref{K.bound.intro} together with the Strichartz estimate \eqref{Strichartz.intro} imply that
\begin{multline}\label{basic.ineq}
\|K\|_{L^{2+}_t([0,T_*);L^{\infty-}_x)}
\ls 1+\||K|\int_{\mathbb R^3} \pel_0 f d\pel\|_{L^1_t([0,T_*);L^{2+}_x)}\\
+\|(\int_{\mathbb R^3} \pel_0^{3+\ep} f d\pel)^{1+\frac{\ep}{2}}\|_{L^\infty_t([0,T_*);L^{2+}_x)}^{\frac 1{2+\ep}}.
\end{multline}
As described above, we control the nonlinear $Kf$ term by using the bound on the left hand side. More precisely, using H\"older's inequality, we have
$$\||K|\int_{\mathbb R^3} \pel_0 f d\pel\|_{L^1_t([0,T_*);L^{2+}_x)}\ls \|K\|_{L^1_t([0,T_*);L^{\infty--}_x)}\|\int_{\mathbb R^3} \pel_0 f d\pel\|_{L^1_t([0,T_*);L^{2++}_x)}.$$
To control $\|K\|_{L^1_t([0,T_*);L^{\infty--}_x)}$, we interpolate between the bound on the left hand side and the conserved $L^2$ norm to obtain
$$\|K\|_{L^1_t([0,T_*);L^{\infty--}_x)}\ls \|K\|_{L^\infty_t([0,T_*);L^2_x)}^{0+}\|K\|_{L^{2+}_t([0,T_*);L^{\infty-}_x)}^{1-}\ls \|K\|_{L^{2+}_t([0,T_*);L^{\infty-}_x)}^{1-}.$$
On the other hand, using the fact that $f$ is in $L^\infty$, we have by interpolation that
$$\|\int_{\mathbb R^3} \pel_0 f d\pel\|_{L^1_t([0,T_*);L^{2++}_x)}\ls \|\int_{\mathbb R^3} \pel_0^{1+} f d\pel\|_{L^1_t([0,T_*);L^2_x)}\ls M_{\th,2}.$$
The right hand side is bounded using the assumption of the theorem. Summarizing, we have
$$\||K|\int_{\mathbb R^3} \pel_0 f d\pel\|_{L^1_t([0,T_*);L^{2+}_x)}\ls \|K\|_{L^{2+}_t([0,T_*);L^{\infty-}_x)}^{1-}.$$
This term is sublinear, thus it can be brought to the left hand side. 
For the second error term in \eqref{basic.ineq}, we first note that by an interpolation inequality, we have for $N>5$,
\begin{multline*}
\|\int_{\mathbb R^3} \pel_0^{3} f d\pel\|_{L^\infty_t([0,T_*);L^{2}_x)}\\
\ls \|\int_{\mathbb R^3} \pel_0 f d\pel\|_{L^\infty_t([0,T_*);L^{2}_x)}^{\frac{N-5}{N-1}}\|\int_{\mathbb R^3} \pel_0^{\frac{N+1}{2}} f d\pel\|_{L^\infty_t([0,T_*);L^{2}_x)}^{\frac 4{N-1}}\\
\ls M_{1,2}^{\frac{N-5}{N-1}}\|\int_{\mathbb R^3} \pel_0^N f d\pel\|_{L^\infty_t([0,T_*);L^1_x)}^{\frac 2{N-1}}.
\end{multline*}
Formally, as $N\to \infty$, this can be thought of as a replacement of the obvious inequality
$$\|\int_{\mathbb R^3} \pel_0^3 f d\pel\|_{L^\infty_t([0,T_*);L^{2}_x)}\ls M_{1,2}\big(\sup_{t\in [0,T_*),x\in\mathbb R^3}\{p_0: f(t,x,\pel)\neq 0\}\big)^2$$
which holds in the setting where $f$ has compact momentum support. Returning to the second error term in \eqref{basic.ineq} that we need to control, while there is a loss in the exponents, we can nevertheless obtain the following bound if we replace $M_{1,2}$ by $M_{\th,2}$ with $\th>1$:
$$\|(\int_{\mathbb R^3} \pel_0^{3+\ep} f d\pel)^{1+\ep}\|_{L^\infty_t([0,T_*);L^{2+}_x)}^{\frac 1{2+\ep}}\ls M_{\th,2}^{\beta}\|\pel_0^N f\|_{L^\infty_t([0,T_*);L^1_xL^1_{\pel})}^{\frac {\alpha}{N+3}}
$$
for any sufficiently large $N$ after choosing $0<\alpha<1$ and $\beta>0$ appropriately. Combining the above bounds and choosing $\infty-$ to be $N+3$, we have
\begin{equation}\label{KLN.bd}
\|K\|_{L^{2+}_t([0,T_*);L^{N+3}_x)}\ls 1+\|\pel_0^N f\|_{L^\infty_t([0,T_*);L^1_xL^1_{\pel})}^{\frac{\alpha}{N+3}}.
\end{equation}
On the other hand, standard bounds for the moments imply that
\begin{equation}\label{f.intro.bd}
\|\pel_0^N f\|_{L^\infty_t([0,T_*);L^1_xL^1_{\pel})}\ls 1+\|K\|_{L^{1}_t([0,T_*);L^{N+3}_x)}^{N+3}.
\end{equation}
Combining \eqref{KLN.bd} and \eqref{f.intro.bd}, we have therefore obtained that for every sufficiently large $N$, there exists $q_1$ such that 
\begin{equation}\label{intro.bd.main}
\|K\|_{L^{q_1}_t([0,T_*);L^{N+3}_x)}+\|\pel_0^N f\|_{L^\infty_t([0,T_*);L^1_xL^1_{\pel})}\ls 1.
\end{equation}

An additional challenge is that the estimate we obtain for $K$, unlike the endpoint Strichartz estimate for $\|K\|_{L^2_t([0,T_*);L^\infty_x)}$ (which does not hold!), does not automatically imply the bound 
\eqref{GScriterion.K}. Nevertheless, we use estimates of 
Pallard \cite{Pallard} to show that the integral of $K$ along characteristics can be bounded by 
\begin{multline}\label{phy.sp.bd}
\|\int_0^{T_*} |K(s,X(s;t,x,p))| ds\|_{L_t^\infty([0,T_*);L^\infty_xL^\infty_{\pel})}\\
\ls \| Kf\pel_0\|_{L^1_t([0,T_*);L^4_xL^1_{\pel})}+\| f\pel_0\|_{L^1_t([0,T_*);L^4_xL^1_{\pel})}.
\end{multline}
To conclude the proof, it is easy to see that by choosing $N$ sufficiently large, the estimate \eqref{intro.bd.main} implies that the right hand side of \eqref{phy.sp.bd} is bounded.

We now turn to the continuation criteria when $q=\infty$. Our starting point is the following variant of \eqref{K.bound.intro}:
\begin{equation*}
K\ls (K)_0+\Box^{-1}(|K|\int_{\mathbb R^3} \pel_0 f d\pel)+(\Box^{-1}(\int_{\mathbb R^3} \pel_0^{1+\ep} f d\pel)^{2+\ep})^{\frac 1{2+\ep}}.
\end{equation*}
Using this, we obtain the following analogue of \eqref{basic.ineq}: 
\begin{multline*}
\|K\|_{L^{2+}_t([0,T_*);L^{\infty-}_x)}
\ls 1+\||K|\int_{\mathbb R^3} \pel_0 f d\pel\|_{L^1_t([0,T_*);L^{2+}_x)}\\
+\|(\int_{\mathbb R^3} \pel_0^{1+\ep} f d\pel)^{2+\ep}\|_{L^\infty_t([0,T_*);L^{2+}_x)}^{\frac 1{2+\ep}}.
\end{multline*}
For the first term, we have
\begin{multline*}
\||K|\int_{\mathbb R^3} \pel_0 f d\pel\|_{L^1_t([0,T_*);L^{2+}_x)}\\
\ls 
\|K\|_{L^\infty_t([0,T_*);L^2_x)}^{1-}\|K\|_{L^1_t([0,T_*);L^{\infty-}_x)}^{0+}\|\int_{\mathbb R^3} \pel_0 f d\pel\|_{L^\infty_t([0,T_*);L^{\infty-}_x)}.
\end{multline*}
Using the $L^2$ conservation law for $K$ and taking the small power of $\|K\|_{L^1_t([0,T_*);L^{\infty-}_x)}^{0+}$ to the left hand side, we can bound the last term term by
$$
\|\int_{\mathbb R^3} \pel_0 f d\pel\|_{L^\infty_t([0,T_*);L^{\infty-}_x)}^{1+}\ls M_{\th,\infty}^{\beta}\|\int_{\mathbb R^3} \pel_0^N f d\pel\|_{L^\infty_t([0,T_*);L^1_x)}^{\frac{\alpha}{N+3}}
$$
for $N$ sufficiently large and $0<\alpha<1$ and $\beta>0$.

For the second term, we have by H\"older's inequality that
\begin{multline*}
\|(\int_{\mathbb R^3} \pel_0^{1+\ep} f d\pel)^{2+\ep}\|_{L^\infty_t([0,T_*);L^{2+}_x)}^{\frac 1{2+\ep}}
\ls \|\int_{\mathbb R^3} \pel_0^{1+} f d\pel\|_{L^\infty_t([0,T_*);L^{4+}_x)} \\
\ls \|\int_{\mathbb R^3} \pel_0 f d\pel\|_{L^\infty_t([0,T_*);L^1_x)}^{\frac 14-}\|\int_{\mathbb R^3} \pel_0^{0+} f d\pel\|_{L^\infty_t([0,T_*);L^{\infty}_x)}^{\frac 34+}\\
=\|\int_{\mathbb R^3} \pel_0 f d\pel\|_{L^\infty_t([0,T_*);L^1_x)}^{\frac 14-}M_{\th,\infty}^{\frac 34+},
\end{multline*}
which is bounded by the conservation of energy and the assumption that $M_{\th,\infty}$ is bounded for some $\th>0$.
Combining these estimates with \eqref{f.intro.bd}, we obtain that
$$\|f\pel_0^N\|_{L^\infty_t([0,T_*);L^1_xL^1_{\pel})}\ls 1+\|f\pel_0^N\|_{L^\infty_t([0,T_*);L^1_xL^1_{\pel})}^{\alpha}.$$
for some $0< \alpha< 1$ for $N$ sufficiently large, which then implies the boundedness of the $N$-th moment.  Then applying \eqref{phy.sp.bd} allows us to further control the integral of $K$ over all characteristics, which then concludes the proof.

\subsection{Outline of the paper}

We end the introduction with an outline of the remainder of the paper.  In Section \ref{sec.cons.law} we recall some of the conservation laws that solutions to the relativistic Vlasov-Maxwell system obey. In Section \ref{sec.local.existence}, we begin the proof of the main theorems by establishing a local existence result. We then recall the Glassey-Strauss decomposition of the electromagnetic fields in Section \ref{sec.pf.GSdec} and obtain useful estimates for each of the decomposed pieces. After that we turn to the proof of  the first continuation criteria in Theorem \ref{main.theorem.1} in Section \ref{sec.pf.GS}. In the remaining sections, we prove the second continuation criteria in Theorem \ref{main.theorem.2}. To this end, we first state the standard Strichartz and moment estimates in Sections \ref{stricharz.sec} and \ref{sec.moment}. Finally, we prove Theorem \ref{main.theorem.2} in Section \ref{sec.pf.con.cri}.

\section{Conservation Laws}\label{sec.cons.law}

Solutions to the relativistic Vlasov-Maxwell system obey the following conservation laws.  We refer to our companion paper \cite[Section 2]{LSSE} for  the derivation.

\begin{proposition}\label{cons.law.1} Solutions to the relativistic Vlasov-Maxwell system \eqref{vlasov}-\eqref{constraints} obey
\beaa
\frac 12 \int_{\{t\}\times \mathbb R^3} (|E|^2+|B|^2)dx+4\pi\int_{\{t\}\times \mathbb R^3\times\mathbb R^3} \pel_0 f d\pel\, dx =\mbox{ constant}.
\eeaa
\end{proposition}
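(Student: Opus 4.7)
The plan is to differentiate each of the two pieces of the claimed conserved quantity in $t$ and show that the two time derivatives cancel. Since the proposition is stated for smooth solutions (where all manipulations below are justified and boundary terms at infinity vanish by decay), no regularization argument is needed beyond the standard one; the proof is purely a computation using \eqref{vlasov}, \eqref{maxwell}, and the identity $\nabla_\pel \pel_0 = \vh$.

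First I would handle the kinetic piece. Using \eqref{vlasov} to replace $\partial_t f$ and integrating by parts in $x$ (the term $\pel_0\vh$ is $\pel$-dependent only) and in $\pel$, one gets
\[
\frac{d}{dt}\int_{\Rt\times\Rt} \pel_0\, f\, d\pel\, dx
= \int_{\Rt\times\Rt} \nabla_\pel\cdot\bigl(\pel_0 (E+\vh\times B)\bigr)\, f\, d\pel\, dx.
\]
Since $\nabla_\pel\pel_0=\vh$, $\nabla_\pel\cdot E=0$, and $\nabla_\pel\cdot(\vh\times B)=0$ (because $\vh=\nabla_\pel\pel_0$ is a gradient in $\pel$ and $B$ is $\pel$-independent), the divergence inside reduces to $\vh\cdot E + \vh\cdot(\vh\times B)=\vh\cdot E$. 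Recalling the definition of $j$, this yields
\[
4\pi\,\frac{d}{dt}\int_{\Rt\times\Rt} \pel_0\, f\, d\pel\, dx = \int_{\Rt} j\cdot E\, dx.
\]

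Next I would handle the field energy. Dotting the first equation in \eqref{maxwell} by $E$ and the second by $B$, adding, and integrating over $\Rt$, one gets
\[
\frac{1}{2}\frac{d}{dt}\int_{\Rt}(|E|^2+|B|^2)\, dx
= \int_{\Rt}\bigl(E\cdot(\nabla_x\times B) - B\cdot(\nabla_x\times E)\bigr) dx \;-\; \int_{\Rt} j\cdot E\, dx.
\]
The first integral on the right is zero by the vector calculus identity $\nabla_x\cdot(E\times B)=B\cdot(\nabla_x\times E)-E\cdot(\nabla_x\times B)$ together with integration by parts (the Poynting flux is a divergence). Adding this identity to the one from the previous paragraph cancels the $\int j\cdot E\, dx$ contributions exactly, yielding
\[
\frac{d}{dt}\Bigl[\tfrac12\!\int_{\Rt}(|E|^2+|B|^2)dx + 4\pi\!\int_{\Rt\times\Rt}\pel_0 f\, d\pel\, dx\Bigr]=0.
\]

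There is no real obstacle here; the only subtlety is justifying that the spatial and momentum boundary terms in the integrations by parts vanish. For the solutions considered (in weighted $H^4$ with the weight $w_3$, and with $f_0$ satisfying the strong decay assumptions in \eqref{ini.bd.2}--\eqref{ini.bd.5.5}) this is standard: in $x$ the fields and moments decay at spatial infinity, and in $\pel$ the weight ensures $\pel_0 f(E+\vh\times B)\to 0$ as $|\pel|\to\infty$. Alternatively, one may simply cite the derivation in \cite[Section 2]{LSSE} as indicated in the statement.
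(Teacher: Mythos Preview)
Your proof is correct and is the standard derivation of energy conservation for the relativistic Vlasov--Maxwell system. The paper itself does not give a proof of this proposition but simply refers to \cite[Section~2]{LSSE}; your computation is exactly the expected one (and presumably what appears there), so there is nothing to compare.
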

In addition to the above conservation law, the $L^q_xL^q_{\pel}$ norm of the solution $f$ to the relativistic Vlasov-Maxwell system is conserved. Moreover, the assumption \eqref{ini.bd.2.5} implies that the initial $L^q_xL^q_{\pel}$ norms are finite. Therefore, we have

\begin{proposition}\label{cons.law.3}
$
 \|f\|_{L^q_\pel L^q_x}=\mbox{ constant},\quad\mbox{for }1\leq q\leq \infty.
$
\end{proposition}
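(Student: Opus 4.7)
The plan is to prove conservation of the $L^q_x L^q_p$ norm by exploiting the fact that the Vlasov equation \eqref{vlasov} has divergence-free transport coefficients, i.e., the characteristic flow on phase space is volume-preserving (Liouville's theorem).

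First I would rewrite \eqref{vlasov} in conservation form. Writing $W(t,x,\pel) \eqdef (\vh,\,E+\vh\times B)$ as a vector field on $\mathbb R^3_x\times\mathbb R^3_\pel$, the key observation is that
\[
\nabla_x\cdot\vh + \nabla_\pel\cdot(E+\vh\times B)=0.
\]
The first term is zero since $\vh$ depends only on $\pel$. For the second, $E$ is independent of $\pel$, and a direct computation using $\partial_{\pel^i}\vh^j=\delta_{ij}/\pel_0-\pel^i\pel^j/\pel_0^3$ combined with the antisymmetry of the Levi-Civita symbol gives $\nabla_\pel\cdot(\vh\times B)=0$. Therefore \eqref{vlasov} can be recast as $\partial_t f + \nabla_{x,\pel}\cdot(Wf)=0$.

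Next, for $1\le q<\infty$, since $f\ge 0$, multiplying the Vlasov equation by $q f^{q-1}$ and using the divergence-free property once more yields
\[
\partial_t (f^q) + \nabla_{x,\pel}\cdot(W f^q) = 0.
\]
Integrating this identity over $\mathbb R^3_x\times\mathbb R^3_\pel$ and discarding the spatial/momentum divergence as a boundary term gives $\tfrac{d}{dt}\|f\|_{L^q_x L^q_\pel}^q=0$. For $q=\infty$, I would argue directly from the characteristic ODEs \eqref{char1}--\eqref{char.data}: since $f$ is constant along characteristics and the flow $(x,\pel)\mapsto(X(t;0,x,\pel),V(t;0,x,\pel))$ is a bijection of $\mathbb R^3_x\times\mathbb R^3_\pel$ to itself, we have $\|f(t)\|_{L^\infty_x L^\infty_\pel}=\|f_0\|_{L^\infty_x L^\infty_\pel}$. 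By interpolation, or by a change-of-variables argument using that Liouville's theorem makes the flow Jacobian equal to $1$, the remaining case $q=1$ also follows from the same representation.

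The only real obstacle is justifying that the boundary terms in the integration by parts truly vanish; this requires sufficient spatial and momentum decay of $f$. Under our working regularity (weighted $H^4$ on phase space together with the $L^1_x L^1_\pel$ momentum bounds available from \eqref{ini.bd.2.5} and the local existence theorem), the decay of $f$ in both $x$ and $\pel$ is more than enough to make the computation rigorous, so the identity goes through as claimed. This completes the proof.
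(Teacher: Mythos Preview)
Your argument is correct and is the standard proof of $L^q$ conservation for Vlasov-type transport equations. The paper itself does not supply a proof of this proposition; it simply states the result (referring the reader to the companion paper \cite{LSSE} for the derivation of the conservation laws in Section~\ref{sec.cons.law}), so there is nothing substantively different to compare against. One minor redundancy: the case $q=1$ is already covered by your argument for $1\le q<\infty$ (multiplying by $qf^{q-1}=1$ and integrating), so the separate change-of-variables remark at the end is unnecessary, though not incorrect.
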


\section{Local existence}\label{sec.local.existence}
In this section, we will prove the local existence result and a continuation criterion for the relativistic Vlasov-Maxwell system. The proof is similar to that in our companion paper \cite{LSSE} for the $2$-dimensional and $2\frac 12$-dimensional problems. In fact, Propositions \ref{uniform.bounds},     \ref{convergence}  and their proofs are identical\footnote{i.e., they are identical except for the fact that the spatial dimension is now $3$. Notice that in the local existence theorem in \cite{LSSE}, the fact that the spatial dimension is $2$ is only used in the form of a Sobolev embedding theorem. In the present setting, it can be replaced with the standard Sobolev embedding theorem in $3$ dimensions.} to their counterparts in  \cite{LSSE}. We record the statements of Propositions \ref{uniform.bounds} and \ref{convergence} for completeness but refer the readers to \cite{LSSE} for the proofs.  We then state and prove the continuation criterion in Proposition \ref{cont.crit.prop} below. We begin with the following main theorem of this section:

\begin{theorem}\label{theorem.local.existence}
Consider initial data $(f_0(x,\pel),E_0(x),B_0(x))$ which satisfy the constraints \eqref{constraints}.  Further for $\nuD\geq 3$, suppose that we have
\begin{equation}\label{f.energy.est}
\Dinit\eqdef \sum_{0\leq k\leq \nuD}\left(\|\nab_x^k K_0 \|_{L^2_x }^2+\|w_{3}\nab_{x,\pel}^k f_0\|_{L^2_x L^2_{\pel}}^2\right)<\infty.
\end{equation}
Then there exists a  $T=T(\Dinit,\nuD)>0$ such that there exists a unique local solution to the relativistic Vlasov-Maxwell system in $[0,T]$ where the bound
\begin{equation}\label{local.existence.bd}
\enerD\eqdef \sum_{0\leq k\leq \nuD}\left(\|\nab_x^k K \|_{L^\infty_t ([0,T];L^2_x )}^2+\|w_{3}\nab_{x,\pel}^k f\|_{L^\infty_t([0,T]; L^2_x L^2_{\pel})}^2\right)
\ls \Dinit 
\end{equation}
holds. Moreover, if $[0,T_*)$ is the maximal time interval of existence and uniqueness and $T_*< +\infty$, then
$$\lim_{s\uparrow T_*} \|\mathcal A\|_{L^1_t([0,s))}=+\infty,$$
where  
\begin{equation}\label{A.t.def}
\mathcal A(t)\eqdef \|(K,\nab_x K,\nab_x^2 K)\|_{L^\infty_x}(t)+\|w_{3}\nab_{x,\pel} f\|_{L^\infty_x L^2_{\pel}}(t).
\end{equation}
\end{theorem}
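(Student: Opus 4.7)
The plan is to prove local existence by a Picard-type iteration in the weighted $H^D$ topology built into $\enerD$, and then to establish a Gronwall-type energy inequality — first at $D=3$, bootstrapped to higher $D$ — that yields the continuation criterion.

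\textbf{Local existence.} Starting from $(f^{(0)},K^{(0)})\equiv(f_0,K_0)$, let $f^{(n+1)}$ solve the linear transport equation obtained from \eqref{vlasov} by freezing the velocity and force coefficients at $(\hat V^{(n)},E^{(n)}+\hat V^{(n)}\times B^{(n)})$ with datum $f_0$, and let $K^{(n+1)}$ solve the linear Maxwell system with current built from $f^{(n+1)}$ and initial data $K_0$. Apply $\nabla^k_{x,p}$ for $0\le k\le D$, test against $w_3^2\nabla^k_{x,p}f^{(n+1)}$, and combine with the $L^2$ wave energy estimate for each $\nabla^k K^{(n+1)}$. The $p$-only weight $w_3$ commutes with $\hat V\cdot\nabla_x$, the force commutator produces a benign factor $|\nabla_p w_3|/w_3\lesssim 1/p_0\lesssim 1$, and the moments $\rho,j$ are controlled in $L^2_x$ by Cauchy--Schwarz in $p$ using $\int_{\mathbb R^3}w_3^{-2}\,dp<\infty$; lower-order coefficient factors are placed in $L^\infty_x$ via $H^2(\mathbb R^3)\hookrightarrow L^\infty$, available since $D\ge 3$. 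A short-time continuity argument gives $T=T(\Dinit,D)>0$ on which $\iterD\lesssim\Dinit$ (Proposition \ref{uniform.bounds}). Differences of iterates are contracted in a norm one derivative weaker than $\enerD$, producing a strong limit; weak-$*$ compactness lifts the high-regularity bound to the limit, giving \eqref{local.existence.bd} and uniqueness (Proposition \ref{convergence}).

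\textbf{Continuation criterion.} The goal is the differential inequality
\[
\frac{d}{dt}\enerD(t)\lesssim\bigl(\mathcal A(t)+1\bigr)\enerD(t),
\]
which by Gronwall and the local existence theorem forces an extension past $T_*$ whenever $\|\mathcal A\|_{L^1([0,T_*))}<\infty$, contradicting maximality. First establish the inequality at $D=3$: apply $\nabla^k_{x,p}$ to \eqref{vlasov} for $0\le k\le 3$, pair with $w_3^2\nabla^k_{x,p}f$, and expand the force commutator into products $(\nabla^j_x\tilde K)(\nabla^{k-j}_{x,p}\nabla_p f)$. For $j=k=3$ the factor $\nabla^3 K$ sits only in $L^2_x$ (from $\enerD$) and is paired with $\nabla_p f\in L^\infty_xL^2_p$ drawn from $\mathcal A(t)$; for $j\le 2$ the factor $\nabla^j K\in L^\infty_x$ comes directly from $\mathcal A(t)$. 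The matching wave-energy estimate for $\nabla^k K$ uses the current bound from local existence together with $\mathcal A(t)$ for the nonlinear interactions. For $D\ge 4$, bootstrap: the $D=3$ estimate already produces an $L^\infty_t$ bound on the $H^3$ norm, hence on $\nabla^3 K$ and $\nabla^2\nabla_{x,p}f$ in $L^2_x$, which is precisely what is needed to place the intermediate-order commutator factors at the $D=4,5,\ldots$ levels; Gronwall on $\enerD$ then yields the desired bound.

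\textbf{Main obstacle.} The delicate step is the top-order commutator $\nabla^D K\cdot\nabla_p f$ in the $D=3$ energy estimate: $\nabla^3 K$ is not accessible in $L^\infty_x$ from $\enerD$ (there is no Sobolev embedding $L^2\hookrightarrow L^\infty$), so the $L^\infty_xL^2_p$ placement of $\nabla_p f$ through $\mathcal A(t)$ is forced and essential — this is exactly the motivation for including $\|w_3\nabla_{x,p}f\|_{L^\infty_xL^2_p}$ in $\mathcal A$. Verifying that every commutator product splits cleanly into an $\enerD$ factor and a genuine $\mathcal A(t)$ factor, with the weight $w_3^2$ apportioned evenly between the two $L^2_{x,p}$ pairings and without ever demanding derivatives of $K$ beyond those appearing in $\mathcal A$, requires the careful case-by-case bookkeeping that parallels the two-dimensional argument in \cite{LSSE}.
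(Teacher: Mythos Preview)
Your proposal is correct and follows essentially the same route as the paper: the same iteration scheme for local existence (Propositions \ref{uniform.bounds} and \ref{convergence}, both deferred to \cite{LSSE}), and the same induction-on-$D$ energy argument for the continuation criterion (Proposition \ref{cont.crit.prop}), with the top-order commutator $\nabla_x^D\tilde K\cdot\nabla_\pel f$ handled exactly as you describe. Two small points of alignment: the paper takes the induction base case at $D\le 2$ rather than $D=3$ (either works, since the problematic residual sum is empty through $D=3$); and in the inductive step the intermediate commutator factor on $f$ is placed in $L^\infty_xL^2_\pel$ via Sobolev embedding costing two more derivatives, so one factor comes from the inductive hypothesis at level $J$ and the other from $\sqrt{\mathcal E_{t,J+1}}$, which is then absorbed by Gronwall---your phrasing slightly understates this, but you do invoke Gronwall at each level so the logic is intact.
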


Following \cite{LSSE}, Theorem \ref{theorem.local.existence} is proved via an iteration scheme. In particular let $(f^{(n)},E^{(n)},B^{(n)})$ be defined iteratively for $n\geq 1$ as solutions to the following linear system:
\bea
& &\rd_t f^{(n)}+\vh\cdot\nabla_x f^{(n)}+ (E^{(n-1)}+\vh\times B^{(n-1)})\cdot \nabla_\pel f^{(n)} = 0,\label{vlasov.l}\\
& &\rd_t E^{(n)}= \nabla_x \times B^{(n)}-j^{(n)},\quad\rd_t B^{(n)}= -\nabla_x\times E^{(n)},\label{maxwell.l.1}\\
& &\nab_x\cdot E^{(n)}=\rho^{(n)},\quad \nab_x\cdot B^{(n)}=0.\label{maxwell.l.2}
\eea
with initial data
\beaa
(f^{(n)},E^{(n)},B^{(n)})|_{t=0}&=&(f_0,E_0,B_0)
\eeaa
such that $(f_0,E_0,B_0)$ verify the constraint equations \eqref{constraints} and where $\rho^{(n)}$ and $j^{(n)}$ are defined by
$$\rho^{(n)}(t,x) \eqdef 4\pi\int_{\Rt} f^{(n)}(t,x,\pel) d\pel,$$
and
$$
j_i^{(n)}(t,x) \eqdef  4\pi \int_{\Rt} \vh_i f^{(n)}(t,x,\pel) d\pel, \quad i=1,2, 3.
$$
We will also use the convention that $E^{(0)}=0$ and $B^{(0)}=0$.

Notice that by the definition of $f^{(n)}$, we have $\rd_t\rho^{(n)}+\nab_x\cdot j^{(n)}=0$. Therefore, the linear Maxwell equations \eqref{maxwell.l.1} and \eqref{maxwell.l.2} are well-posed\footnote{For example, we see this by defining instead $(E^{(n)},B^{(n)})$ using the wave equations $\Box E^{(n)}= \nabla_x \rho^{(n)}+\rd_t j^{(n)}$ and $\Box B^{(n)}= -\nabla_x\times j^{(n)}$ with initial data $(f^{(n)},E^{(n)},B^{(n)})|_{t=0}=(f_0,E_0,B_0)$ and $(\partial_t E^{(n)}, \partial_t B^{(n)})|_{t=0}=(\nab_x\times B_0-j_0,-\nab_x\times E_0)$. We can then show that $\Box(\rd_t E^{(n)}-\nab_x\times B^{(n)}+j^{(n-1)})=0$ with zero initial data and similarly for other equations in \eqref{maxwell.l.1} and \eqref{maxwell.l.2}. Therefore the solutions to the wave equations are indeed the solutions to the Maxwell equations.} and $(f^{(n)},E^{(n)},B^{(n)})$ are defined globally in time. We will state the uniform boundedness and convergence results for these iterations. As mentioned before, we refer the readers to \cite{LSSE} for the proof. First we have 

\begin{proposition}\label{uniform.bounds}
Given $D\geq 3$ and initial data $(f_0(x,\pel),E_0(x),B_0(x))$ and initial energy $\Dinit\geq 0$ as in the statement of Theorem \ref{theorem.local.existence}, there exists a $T=T(\Dinit,D)>0$ such that for all $n\geq 1$,
$$\sum_{0\leq k\leq D}\left(\|\nab_x^k K^{(n)} \|^2_{L^\infty_t ([0,T];L^2_x )}+\|w_{3}\nab_{x,\pel}^k f^{(n)}\|_{L^\infty_t([0,T]; L^2_x L^2_{\pel})}^2\right)\ls \Dinit .$$
\end{proposition}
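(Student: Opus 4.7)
The plan is to run an induction on $n$, using energy estimates to show that if the $(n-1)$-st iterate satisfies the stated bound on $[0,T]$, then so does the $n$-th iterate, for $T$ chosen sufficiently small in terms of $\Dinit$ and $D$ but independent of $n$. Concretely, I would set
\[
\iterD \eqdef \sum_{0\leq k\leq D}\left(\|\nab_x^k K^{(n)}\|_{L^\infty_t([0,T];L^2_x)}^2+\|w_3\nab_{x,\pel}^k f^{(n)}\|_{L^\infty_t([0,T];L^2_xL^2_{\pel})}^2\right),
\]
and show that there exist constants $C_0,C_1$ depending only on $D$ and a continuous increasing function $\Phi$ such that $\iterD\leq C_0\Dinit+T\,\Phi(\iterD,\tilde{\mathcal{E}}_{T,D}^{(n-1)})$ whenever both iterates are defined. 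Once this is established, an elementary continuity argument (initializing with the bound already satisfied trivially at $t=0$) yields the uniform bound $\iterD\ls\Dinit$ for all $n$, provided $T$ is chosen small enough depending on $\Dinit$.

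For the Maxwell part, I would apply $\nab_x^k$ to \eqref{maxwell.l.1} and do a standard $L^2$ energy estimate. The symmetric structure of the curl system gives
\[
\tfrac{d}{dt}\|\nab_x^k K^{(n)}\|_{L^2_x}^2 \ls \|\nab_x^k j^{(n)}\|_{L^2_x}\|\nab_x^k K^{(n)}\|_{L^2_x},
\]
and $\|\nab_x^k j^{(n)}\|_{L^2_x}$ is controlled, after integrating in $\pel$ against the weight $w_3$ (which is integrable times $|\hat{\pel}|\leq 1$), by $\|w_3\nab_{x,\pel}^k f^{(n)}\|_{L^2_xL^2_{\pel}}$; here the $\pel$-weight $w_3(\pel)=\pel_0^{3/2}\log(1+\pel_0)$ is chosen precisely so that $w_3^{-1}\in L^2_\pel$. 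Integrating in time yields a bound of $\|\nab_x^kK^{(n)}\|_{L^\infty_t L^2_x}^2$ in terms of $\Dinit$ and $T(\iterD)$.

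For the Vlasov part, I would apply $\nab_{x,\pel}^k$ to \eqref{vlasov.l}, multiply by $w_3^2\,\nab_{x,\pel}^k f^{(n)}$, and integrate over $\mathbb R^3_x\times\mathbb R^3_\pel$. The principal transport terms are antisymmetric and vanish, leaving commutator terms of two types: (i) those arising from $[\nab_{x,\pel}^k,\hat\pel\cdot\nab_x]$ and from the $\pel$-derivative falling on the weight $w_3$, both of which are schematically of the form $w_3\nab_{x,\pel}^{\leq k}f^{(n)}$ and can be absorbed; and (ii) those arising from $[\nab_{x,\pel}^k,(E^{(n-1)}+\hat\pel\times B^{(n-1)})\cdot\nab_\pel]$, which involve derivatives of $K^{(n-1)}$. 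The latter is handled by the standard Moser-type product estimate, where the low-order factor (at most $\lfloor k/2\rfloor$ derivatives of $K^{(n-1)}$ or of $f^{(n)}$) is placed in $L^\infty_x$ via Sobolev embedding $H^2(\mathbb R^3_x)\hookrightarrow L^\infty_x$; this is where the condition $D\geq 3$ enters. This produces a bound of the form $\tfrac{d}{dt}\|w_3\nab_{x,\pel}^{\le D}f^{(n)}\|_{L^2_xL^2_\pel}^2 \ls \Phi(\iterD,\tilde{\mathcal E}_{T,D}^{(n-1)})$.

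Adding the Maxwell and Vlasov estimates and integrating in $t\in[0,T]$ closes the recursion. The main technical obstacle is the careful bookkeeping with the weight $w_3$: one needs the commutator $[\nab_\pel,w_3]/w_3$ to be bounded, the prefactors of the form $|\hat\pel|\leq 1$ appearing from the transport operator to avoid producing additional $\pel_0$ growth, and the current estimate $\|j^{(n)}\|_{L^2_x}\ls \|w_3 f^{(n)}\|_{L^2_xL^2_\pel}$ to be sharp. As noted in the paper, this bookkeeping is identical to the 2D/2.5D argument in \cite{LSSE}, with only the Sobolev embedding dimension changed, and hence Proposition \ref{uniform.bounds} follows by direct transcription of that proof.
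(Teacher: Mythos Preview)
Your proposal is correct and follows essentially the same approach as the paper, which defers the proof to the companion paper \cite{LSSE}: energy estimates for the Maxwell and transport equations, with commutators handled via Sobolev embedding (using $D\geq 3$) and the weight $w_3$ chosen so that $w_3^{-1}\in L^2_\pel$, closed by induction on $n$ after choosing $T$ small. One minor notational remark: the symbol $\iterD$ is already used in the paper for the \emph{difference} norm in Proposition~\ref{convergence}, so you should pick a different name for the individual iterate energy.
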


Here and below, we will use the notation that $K^{(n)}\eqdef(E^{(n)},B^{(n)})$ and $\tilde{K}^{(n)}\eqdef E^{(n)}+\vh\times B^{(n)}$.

The differences $f^{(n)}-f^{(n-1)}$, $E^{(n)}-E^{(n-1)}$ and $B^{(n)}-B^{(n-1)}$ in fact converge to zero exponentially in $n$ on a sufficiently small time interval.
To see this, we define the following difference norm for $n\ge 1$ and $D \geq 3$:
\begin{multline*}
\iterD
\eqdef 
\sum_{0\leq k\leq \nuD -1}\|\nab_x^k (K^{(n)}-K^{(n-1)}) \|_{L^\infty_t ([0,T];L^2_x )}^2
\\
+
\sum_{0\leq k\leq \nuD -1}
\|w_{3}\nab_{x,\pel}^k (f^{(n)}-f^{(n-1)})\|_{L^\infty_t([0,T]; L^2_x L^2_{\pel})}^2.
\end{multline*}
The sequence $(f^{(n)}, E^{(n)}, B^{(n)})$ is in fact Cauchy.
\begin{proposition}\label{convergence}
Given initial data $(f_0(x,\pel),E_0(x),B_0(x))$ as in the statement of Theorem \ref{theorem.local.existence} and $\Dinit \geq 0$ from \eqref{f.energy.est}, then for $D\geq 3$ there exists a positive time $T=T(\Dinit,\nuD)\ll 1$ such that for all $n\geq 1$ we have the following estimate for some constant $C>0$:
\begin{equation*}
\iterD\leq \left( C \Dinit T^2\right)^{n-1}.
\end{equation*}
In particular, by choosing $T$ smaller if necessary, $K^{(n)}$ is a Cauchy sequence in $L^\infty_t ([0,T];H^{\nuD-1}_x)$ and $f^{(n)}$ is a Cauchy sequence in 
$L^\infty_t([0,T]; H^{\nuD-1}(w_3(\pel)^2d\pel dx))$.  Moreover, using this together with Proposition \ref{uniform.bounds}, we observe that the limits  $f \in L^\infty_t([0,T]; H^{\nuD}(w_3(\pel)^2d\pel dx))$ and $K \in L^\infty_t ([0,T];H^{\nuD}_x)$  give rise to a unique local solution to the 3D relativistic Vlasov-Maxwell system \eqref{vlasov}, \eqref{maxwell}, \eqref{constraints}.
\end{proposition}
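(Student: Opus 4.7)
\textbf{Proof proposal for Proposition \ref{convergence}.}
The plan is a standard iteration-difference argument, whose point is to turn the Vlasov/Maxwell coupling into a contraction after taking differences. Set $g^{(n)}\eqdef f^{(n)}-f^{(n-1)}$ and $L^{(n)}\eqdef K^{(n)}-K^{(n-1)}$, with $\tilde L^{(n)}\eqdef(L^{(n)})_E+\vh\times(L^{(n)})_B$. Subtracting the systems at levels $n$ and $n-1$ gives, for the kinetic difference,
\[
\rd_t g^{(n)}+\vh\cdot\nabla_x g^{(n)}+\tilde K^{(n-1)}\cdot\nabla_\pel g^{(n)}=-\tilde L^{(n-1)}\cdot\nabla_\pel f^{(n-1)},
\]
and for the field difference the linear Maxwell system with source $-(j^{(n)}-j^{(n-1)})=-4\pi\int\vh\, g^{(n)}\,d\pel$. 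The crucial structural point is the staggering: $g^{(n)}$ is forced by $L^{(n-1)}$ while $L^{(n)}$ is forced by $g^{(n)}$. This is what will convert one factor of $T$ (from the Maxwell step) and another factor of $T$ (from the Vlasov step) into the quadratic gain $T^2$ per iteration.

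Next I would carry out energy estimates at level $D-1$ (one derivative below the uniform bound of Proposition \ref{uniform.bounds}, which is the usual loss for this contraction argument). For the Maxwell difference, the standard $H^{D-1}_x$ energy identity combined with integration in $t$ yields
\[
\|L^{(n)}\|_{L^\infty_t([0,T];H^{D-1}_x)}\ls T\,\|j^{(n)}-j^{(n-1)}\|_{L^\infty_t([0,T];H^{D-1}_x)},
\]
and a weighted Minkowski/Cauchy--Schwarz in $\pel$ (using $|\vh|\leq 1$ and the fact that $w_3(\pel)^{-1}\in L^2_\pel$) bounds the right side by $T\,\|w_3 g^{(n)}\|_{L^\infty_t([0,T];H^{D-1}(w_3^2 d\pel\,dx))}$. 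For the Vlasov difference I would apply $\nab_{x,\pel}^k$ for $k\leq D-1$, multiply by $w_3^2\nab_{x,\pel}^k g^{(n)}$, and integrate. The transport operator $\rd_t+\vh\cdot\nab_x+\tilde K^{(n-1)}\cdot\nab_\pel$ is skew after integration by parts up to a harmless weight commutator (which is $\ls |\tilde K^{(n-1)}|\lesssim\|K^{(n-1)}\|_{H^D_x}$ by Sobolev). The commutators $[\nab_{x,\pel}^k,\tilde K^{(n-1)}\cdot\nab_\pel]g^{(n)}$ and the forcing $\nab_{x,\pel}^k(\tilde L^{(n-1)}\cdot\nab_\pel f^{(n-1)})$ are handled by the standard Moser/Kato--Ponce product estimates in $H^{D-1}$, using Proposition \ref{uniform.bounds} to convert every $H^{D-1}$ norm of $K^{(n-1)}$ or $w_3\nab_{x,\pel}^{\leq D}f^{(n-1)}$ into $\Dinit^{1/2}$.

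Putting the two energy estimates together and absorbing the harmless Gronwall factor into the $\ls$ (by shrinking $T$), I get
\[
\iterD\ls T^2\,\Dinit\,\tilde{\mathcal E}^{(n-1)}_{T,\nuD},
\]
and iterating this bound down to $n=1$ (with $\tilde{\mathcal E}^{(1)}_{T,\nuD}\ls\Dinit$ from Proposition \ref{uniform.bounds}) yields the claimed geometric estimate $\iterD\leq(C\Dinit T^2)^{n-1}$ once $T$ is chosen so small that $C\Dinit T^2<1$. The Cauchy property in $L^\infty_t([0,T];H^{D-1}_x)\times L^\infty_t([0,T];H^{D-1}(w_3^2 d\pel\,dx))$ is then immediate. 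Passing to the limit, the strong $H^{D-1}$ convergence lets me identify all nonlinear terms in \eqref{vlasov}, \eqref{maxwell}, \eqref{constraints} in the sense of distributions, while the uniform $H^D$ bound (Proposition \ref{uniform.bounds}) is preserved under weak-$\ast$ limits and gives $f\in L^\infty_t H^D(w_3^2)$, $K\in L^\infty_t H^D_x$; uniqueness at this regularity is a direct consequence of the same difference estimate applied to any two solutions.

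The main obstacle I expect is the bookkeeping of the mixed $(x,\pel)$-derivative commutators against the momentum weight $w_3$, especially in the term $\nab_{x,\pel}^k(\tilde L^{(n-1)}\cdot\nab_\pel f^{(n-1)})$ where the $\vh\times(L^{(n-1)})_B$ piece produces $\pel$-derivatives of $\vh$ that scale like $\pel_0^{-1}$; one must check that the weight $w_3$ correctly redistributes between the two factors so that everything closes in the quadratic estimate $\|\cdot\|_{H^{D-1}(w_3^2)}\cdot\|\cdot\|_{H^{D-1}_x}$. As the authors note, this is precisely the computation already performed in \cite{LSSE}, so I would invoke that argument verbatim rather than redo it.
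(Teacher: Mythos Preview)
Your proposal is correct and follows precisely the standard contraction-in-differences argument that the paper defers to \cite{LSSE}: energy estimates for the staggered Vlasov and Maxwell difference equations at regularity $H^{D-1}$, each contributing a factor of $T$, with Proposition \ref{uniform.bounds} supplying the uniform $H^D$ control needed to close the commutator and forcing terms. Since the paper gives no independent proof here but simply cites its companion, your outline is exactly what is intended.
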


We now prove the continuation criterion. This is slightly different from \cite{LSSE} as we need two derivatives of $K$. We will therefore record the proof below. To this end, it suffices to work with an actual solution (instead of the approximating sequence) and show that as long as $\|(K,\, \nab_xK,\, \nab_x^2K)\|_{L^1_t( [0,T_*);L^\infty_x)}$ and $\|w_{3}\nab_{x,\pel} f\|_{L^1_t ([0,T_*);L^\infty_x L^2_{\pel})}$ are bounded, then $\enerD$ is also bounded. This will allow us to invoke the local existence theorem to contradict the maximality of $T_*$. 
\begin{proposition}\label{cont.crit.prop}
We recall $\mathcal A(t)$ from \eqref{A.t.def} and we assume that
$\|  \mathcal{A} \|_{L^1_t( [0,T_*))} < \infty$.  
Then, for $\nuD \ge 0$ we have
$$
\sqrt{\enerD}
\le
C^*<\infty,
$$
where $C^*=C^*(\Dinit,\|\mathcal{A}\|_{L^1_t([0,T_*))},T_*,\nuD)$ is a positive constant depending on $\Dinit$, $\|\mathcal{A}\|_{L^1_t([0,T_*))}$, $\nuD$ and $T_*$ only.
\end{proposition}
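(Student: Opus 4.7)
The plan is a direct high-order weighted $L^2$ energy estimate for the actual solution $(f,E,B)$, closed via Grönwall's inequality with $\mathcal{A}(t)$ as the integrable coefficient; the iteration of Propositions~\ref{uniform.bounds}-\ref{convergence} is not needed here. Let me introduce the instantaneous energy $\mathcal{G}(t) \eqdef \sum_{0\le k\le\nuD}\bigl(\|\nab_x^k K(t)\|_{L^2_x}^2+\|w_3\nab_{x,\pel}^k f(t)\|_{L^2_xL^2_\pel}^2\bigr)$, so that $\enerD=\sup_{t\in[0,T]}\mathcal{G}(t)$, and the goal is to bound $\mathcal{G}(t)$ pointwise in $t\in[0,T_*)$.

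For each $k$ with $0\le k\le\nuD$, I would differentiate \eqref{vlasov} by $\nab_{x,\pel}^k$, multiply by $w_3^2\nab_{x,\pel}^k f$, and integrate over $\Rt_x\times\Rt_\pel$. The transport part produces $\frac12\frac{d}{dt}\|w_3\nab_{x,\pel}^k f\|_{L^2_xL^2_\pel}^2$ plus a term from $\tK\cdot\nab_\pel$ acting on the weight $w_3^2$, which is absorbed by $\|\tK\|_{L^\infty_x}\|w_3\nab_{x,\pel}^k f\|_{L^2_xL^2_\pel}^2$ since $|\nab_\pel w_3|\ls w_3$. In parallel, I would apply $\nab_x^k$ to \eqref{maxwell} and use the standard $L^2_x$ energy identity; the source is estimated via Cauchy-Schwarz in $\pel$ using that $w_3^{-1}=\pel_0^{-3/2}/\log(1+\pel_0)\in L^2_\pel(\Rt)$ (the $\log$ factor in \eqref{weight.notation.3D} is exactly what secures this borderline integrability in 3D), giving $\|\nab_x^k j\|_{L^2_x}\ls \mathcal{G}(t)^{1/2}$.

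The central step is to control the Vlasov commutator $[\nab_{x,\pel}^k,\,\vh\cdot\nab_x+\tK\cdot\nab_\pel]f$ in the weighted $L^2$ norm. The piece coming from $\vh\cdot\nab_x$ only produces factors $\nab_\pel^j\vh$ with $j\ge 1$, all uniformly bounded in $\pel$, and is therefore harmless. The piece from $\tK\cdot\nab_\pel$ is a sum of terms $\nab^j\tK\cdot\nab^{k-j}\nab_\pel f$ with $1\le j\le k$, and I would package these through a vector-valued 3D Moser inequality, viewing $\tK$ as a scalar in $x$ and $\nab_\pel f(x,\cdot)$ as $L^2_\pel(w_3^2 d\pel)$-valued, to reduce the whole commutator to the extremal bound $\|\tK\|_{L^\infty_x}\mathcal{G}(t)^{1/2}+\mathcal{G}(t)^{1/2}\|w_3\nab_{x,\pel} f\|_{L^\infty_xL^2_\pel}\ls \mathcal{A}(t)\,\mathcal{G}(t)^{1/2}$. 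Summing over $0\le k\le\nuD$ and combining the Vlasov and Maxwell contributions yields the key differential inequality $\frac{d}{dt}\mathcal{G}(t)\ls (1+\mathcal{A}(t))\,\mathcal{G}(t)$, and Grönwall's lemma then delivers $\mathcal{G}(t)\le \Dinit\exp\bigl(CT_*+C\|\mathcal{A}\|_{L^1_t([0,T_*))}\bigr)$; taking the supremum over $t\in[0,T]$ gives the claimed bound $\sqrt{\enerD}\le C^*$.

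The main technical obstacle is the Moser estimate just above. Since $\mathcal{A}(t)$ supplies $L^\infty_x$ control only for $K$, $\nab_x K$ and $\nab_x^2 K$, for $\nuD\ge 5$ the intermediate terms $\nab^j K$ with $3\le j\le \nuD-3$ cannot be placed in $L^\infty$ directly, and one must rely on Gagliardo-Nirenberg interpolation inside the Moser proof to redistribute derivatives between the two factors so that only the extremal factors are taken in $L^\infty$. A secondary bookkeeping point is that every commutation of $\nab_\pel$ with $w_3$ costs at most a factor bounded by $w_3/\pel_0$, so the $\pel$-weight never produces a loss.
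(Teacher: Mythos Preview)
Your overall plan—a weighted $L^2$ energy identity for the actual solution followed by Gr\"onwall—is exactly what the paper does. The difference lies in how you propose to handle the intermediate commutator terms $\nab_x^i\tK\cdot\nab_{x,\pel}^j\nab_\pel f$, and here there is a genuine gap.

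Your proposed Moser/Gagliardo--Nirenberg interpolation does not deliver the bound $\|w_3[\nab_{x,\pel}^k,\tK]\nab_\pel f\|_{L^2_xL^2_\pel}\ls\mathcal{A}(t)\sqrt{\mathcal{G}(t)}$ when the multi-index $\alpha$ has $\pel$-components. Since $K$ depends only on $x$, the commutator $[\partial_x^{\alpha_x}\partial_\pel^{\alpha_\pel},\tK]\nab_\pel f$ reduces (after discarding harmless $\nab_\pel\vh$ factors) to terms of the form $(\partial_x^i K)\cdot(\partial_x^{|\alpha_x|-i}\partial_\pel^{|\alpha_\pel|+1} f)$. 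A vector-valued Gagliardo--Nirenberg interpolation in $x$ of the second factor would have, as its $L^\infty_x$ endpoint, the quantity $\|w_3\partial_\pel^{|\alpha_\pel|+1}f\|_{L^\infty_xL^2_\pel}$; but $\mathcal{A}(t)$ only supplies $\|w_3\nab_{x,\pel} f\|_{L^\infty_xL^2_\pel}$, i.e.\ a \emph{single} derivative. Concretely, already at $\nuD=4$ the term $(\partial_x^3K)(\partial_\pel^2 f)$ (from $\alpha_x=3$, $\alpha_\pel=1$, $\beta_x=3$) admits no H\"older/GN split that yields $\mathcal{A}\sqrt{\mathcal{G}}$: one gets at best $\mathcal{A}^{1-\theta}\mathcal{G}^{(1+\theta)/2}$ for some $\theta>0$, which after multiplication by the remaining $\sqrt{\mathcal{G}}$ in the energy identity produces a superlinear right-hand side, and Gr\"onwall no longer closes. (Nor can you run the full Moser product estimate on $\mathbb{R}^6_{x,\pel}$, since $K$ is constant in $\pel$ and hence not in any $L^2_\pel$-based space.)

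The paper circumvents this by \emph{inducting on $\nuD$}: one first establishes the bound for $\nuD\le 2$ (where every $\nab_x^iK$ that appears has $i\le 2$ and hence lies in $L^\infty_x$ via $\mathcal{A}$), and then, at level $\nuD=J+1$, places the dangerous factor $\nab_x^iK$ with $3\le i\le J$ in $L^2_x$---bounded by the already-known constant from level $J$---and uses Sobolev embedding $H^2_x\hookrightarrow L^\infty_x$ to control $\|w_3\nab_\pel\nab_{x,\pel}^j f\|_{L^\infty_xL^2_\pel}$ by $\|w_3\nab_{x,\pel}^{j+3}f\|_{L^2_xL^2_\pel}\le\sqrt{\mathcal{E}_{t,J+1}}$. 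This yields an inequality that is linear in $\sqrt{\mathcal{E}_{t,J+1}}$ and Gr\"onwall closes. Your argument can be repaired by inserting exactly this induction; without it, the single-step differential inequality you claim does not hold.
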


\begin{proof}
We will prove the proposition via induction on the number of derivatives $\nuD \ge 0$. Using standard energy estimates as in \cite{LSSE}, we have
$$
\sqrt{\mathcal E_{T,D}}
\ls 
\sqrt{\Dinit}
+
\tilde{M}_{\nuD},
$$
where  
\begin{multline}\label{M.est.D}
\tilde{M}_{\nuD} \eqdef
\sum_{0\leq k\leq \nuD}
\sum_{\substack{i+j=k\\0\le j\leq k-1 }}
\|w_{3}\nab_{x,\pel}^{i} \tilde{K} \nabla_{\pel} \nab_{x,\pel}^{j} f\|_{L^1_t([0,T_*); L^2_x L^2_{\pel})}
\\
+\sum_{0\leq k\leq \nuD}\|\pel_0^{\frac 12}\log(1+\pel_0) |K| \nab_{x,\pel}^k f\|_{L^1_t([0,T_*); L^2_x L^2_{\pel})}
\\
+\sum_{0\leq k\leq \nuD}\|w_{3}\nab_{x,\pel}^k f\|_{L^1_t([0,T_*); L^2_x L^2_{\pel})}.
\end{multline}
For the first term above, we have the bound
\begin{multline}\label{M.est.2D}
\sum_{0\leq k\leq \nuD}
\sum_{\substack{i+j=k\\ 0\le j\leq k-1 }}
\|w_{3}\nab_{x,\pel}^i \tilde{K} \nabla_{\pel} \nab_{x,\pel}^{j} f\|_{L^1_t([0,T_*); L^2_x L^2_{\pel})}
\\
\ls
\left\| 
\mathcal{A}(t) \sqrt{\mathcal{E}_{t,\nuD}} \right\|_{L^1_t([0,T_*))}\\
+
\sum_{\substack{i+j\leq \nuD\\ 1\le j\leq \nuD-3\\ 3\le i \le \nuD-1 }}
\int_0^{T_*} \|\nab_x^i K \|_{L^2_x }\|w_{3} \nabla_{\pel} \nab_{x,\pel}^{j} f\|_{L^\infty_x L^2_{\pel}} \, dt.
\end{multline}
Note that the sum $\sum_{\substack{i+j\leq \nuD\\ 1\le j\leq \nuD-3\\ 3\le i \le \nuD-1 }}$ is empty when $\nuD \leq 2$; this term is not present when $0\le \nuD \le 2$.  
We now turn to the other two terms in \eqref{M.est.D}. We have
\begin{multline}
\sum_{0\leq k\leq \nuD}\|\pel_0^{\frac 12}\log(1+\pel_0) |K| \nab_{x,\pel}^k f\|_{L^1_t([0,T_*); L^2_x L^2_{\pel})}
\\
+\sum_{0\leq k\leq \nuD}\|w_{3}\nab_{x,\pel}^k f\|_{L^1_t([0,T_*); L^2_x L^2_{\pel})}\\
\ls
\left\| 
(\mathcal{A}(t)+1) \sqrt{\mathcal{E}_{t,\nuD}} \right\|_{L^1_t([0,T_*))}.
\end{multline}
Now we can start the induction; when $\nuD \leq 2$ we have 
$$
\tilde{M}_\nuD \ls
\int_0^{T_*} dt \left( \mathcal{A}(t)+1\right)
\sqrt{\mathcal{E}_{t,\nuD}} \quad (\nuD \leq 2)
$$
Using Gronwall's inequality, we then have
\begin{equation}\notag
\sqrt{\enerD}
\le 
\tilde{C}^* \sqrt{\Dinit} \quad (\nuD \leq 2),
\end{equation}
where $\tilde{C}^*<\infty$ is a positive constant depending only on $\|\mathcal{A}\|_{L^1_t([0,T_*))}$ and $T_*$. 
Further suppose that for some integer $J\ge 2$ we have
\begin{equation}\label{cc.2}
\sqrt{\enerD}
\ls 
\tilde{C}^*_J \quad (0\le \nuD \le J),
\end{equation}
where $\tilde{C}^*_J<\infty$ is a positive constant depending only on $\sqrt{\mathcal{E}_{0,J}}$, $\|\mathcal{A}\|_{L^1_t([0,T_*))}$ and $T_*$.
We will prove the same inequality holds for $J+1$.

To this end we estimate last term in \eqref{M.est.2D} when $\nuD = J+1$.   We apply Sobolev embedding when 
$1\le j\leq \nuD-3=J-2$ and $3\le i \le \nuD-1=J$
 to get
\begin{multline*}
\int_0^{T_*}  \|\nab_x^i K \|_{L^2_x }\|w_{3} \nabla_{\pel} \nab_{x,\pel}^{j} f\|_{L^\infty_x L^2_{\pel}} \, dt
\\
\ls \int_0^{T_*}  \|\nab_x^i K \|_{L^2_x }
\left(\|w_{3} \nab_{x,\pel}^{j+1} f\|_{L^2_x L^2_{\pel}} +\|w_{3} \nab_{x,\pel}^{j+3} f\|_{L^2_x L^2_{\pel}}\right)\, dt
\\
\ls 
\tilde{C}^*_J
\int_0^{T_*}  (\tilde{C}^*_J+\|w_{3} \nab_{x,\pel}^{j+3} f\|_{L^2_x L^2_{\pel}})\, dt\\
\ls 
\tilde{C}^*_J
\int_0^{T_*}  (\tilde{C}^*_J+\sqrt{\mathcal E_{t,J+1}})\, dt.
\end{multline*}
Substituting this into \eqref{M.est.2D} and applying Gronwall's inequality, we obtain
the desired result.
\end{proof}

\section{Glassey-Strauss decomposions of the electromagnetic fields}\label{sec.pf.GSdec}

In order to close our estimates in the proof of Theorems \ref{main.theorem.1} and \ref{main.theorem.2}, we need to control the electromagnetic field by the particle density and the electromagnetic field itself (and not their derivatives). In \cite{GS86}, Glassey-Stauss showed that this can be achieved via an important representation that can be seen after appropriate integration by parts in the wave kernel. We summarize the Glassey-Strauss representations of the electromagnetic fields below and prove some preliminary estimates.

\subsection{Decomposition of $K$}\label{sec.dec.K}
Following \cite{GS86}, we decompose $E$ and $B$ in terms of:
$$4\pi E(t,x)=4\pi E=(E)_0+E_S+E_T,$$
and
$$4\pi B(t,x)=4\pi B=(B)_0+B_S+B_T,$$
where $(E)_0$ and $(B)_0$ depend only on the initial data\footnote{More precisely, these initial data terms $(E)_0$ and $(B)_0$ can be bounded pointwise by $$\frac 1{t}\int_{|y-x|=t} \int_{\mathbb R^3} \pel_0 f_0 d\pel dS,\quad \frac 1{t}\int_{|y-x|=t} |\nab_x K_0| dS,\quad\frac 1{t^2}\int_{|y-x|=t} |K_0| dS.$$
Moreover, the first derivatives of these terms can be estimated by 
$$\frac 1{t}\int_{|y-x|=t}\int_{\mathbb R^3}\pel_0 |\nab_xf_0| d\pel dS,\quad\frac 1{t}\int_{|y-x|=t} |\nab_x^2 K_0| dS,\quad \frac 1{t^2}\int_{|y-x|=t} |\nab_xK_0| dS.$$
and the second derivatives of these terms can be controlled by
$$\frac 1{t}\int_{|y-x|=t}\int_{\mathbb R^3}\pel_0 |\nab_x^2 f_0| d\pel dS,\quad \frac 1{t}\int_{|y-x|=t} |\nab_x^3 K_0| dS,\quad\frac 1{t^2}\int_{|y-x|=t} |\nab_x^2 K_0| dS.$$
Thus, using Sobolev embedding and the bounds \eqref{ini.bd.2.5}-\eqref{ini.bd.6} for the initial data in the assumptions for Theorem \ref{main.theorem.1}, we can easily see that $(E)_0$ and $(B)_0$ are in $C^2$. Here $dS$ denotes the standard measure on the sphere of radius $t$. We will also use $dS$ in the remainder of the paper to denote the standard measure on the sphere that is being integrated over.} 
and the other terms of $E$ are
\bea\label{ET.id}
E_T^i 
&=& \int_{C_{t,x}}\int_{\Rt} 
\frac{H_T^E(\om,\pel)_i}{(t-s)^2}
f ~d\pel\, d\sigma,
\\
\label{ES.id}
E_S^i &=& 
\int_{C_{t,x}}\int_{\Rt} \frac{H_S^E(\om,\pel)_{ij}  \tilde{K}_j}{(t-s)} f d\pel\, d\sigma,
\eea
for $i,j=1,2,3$, where we used the convention that repeated indices are summed over.   
We recall that $\tilde{K} =E+\vh\times B$. Furthermore, we are using the formulas
$$
H_T^E(\om,\pel)_i 
\eqdef
-\frac{(\om_i+\vh_i)(1-|\vh|^2)}{(1+\vh\cdot\om)^2},
$$
and
$$
H_S^E(\om,\pel)_{ij} 
\eqdef
-
\left(\frac{\delta_{ij}-\vh_i\vh_j}{1+\vh\cdot\om}\right)\frac{1}{\pZ}
+
\left(\frac{(\om_i+\vh_i)(\om_j-(\om\cdot\vh)\vh_j)}{(1+\vh\cdot\om)^2}\right)\frac{1}{\pZ}.
$$
The rest of the $B$ terms are similarly given by
\bea
\label{BT.id}
B_T^i (t,x)&=& \int_{C_{t,x}}\int_{\Rt} 
\frac{H_T^B(\om,\pel)_i }{(t-s)^2}
f d\pel\, d\sigma,
\\
\label{BS.id}
B_S^i (t,x)&=& 
\int_{C_{t,x}}\int_{\Rt} \frac{H_S^B(\om,\pel)_{ij}  \tilde{K}_j}{(t-s)}
f d\pel\, d\sigma,
\eea
where
$$
H_T^B(\om,\pel)_i 
\eqdef
\frac{(\om\times\vh)_i(1-|\vh|^2)}{(1+\vh\cdot\om)^2},
$$
and
$$
H_S^B(\om,\pel)_{ij} 
\eqdef
- \frac{\om_k\varepsilon_{ikj}}{\pZ(1+\vh\cdot\om)} 
+
\frac{(\om\times\vh)_i\vh_j}{\pZ(1+\vh\cdot\om)}
-
\frac{(\om\times \vh)_i(\om_j-(\om\cdot\vh)\vh_j)}{(1+\vh\cdot\om)^2 \pZ},
$$
where $\varepsilon_{ikj}$ is the standard Levi-Civita symbol so that $\om_k\varepsilon_{ikj}\tilde{K}_j = 
(\om \times \tilde{K})_i$.
Here, the integration over the cone $C_{t,x}$ can be given in polar coordinates by
\bea
\int_{C_{t,x}} g(s,y) ~d\sigma 
= 
\int_0^t ds  \int_0^{2\pi}  d\phi  \int_0^{\pi}  (t-s)^2\sin\th d\th  ~  g(s,x+(t-s)\om), \label{cone.vol.form}
\eea
where $\om$ takes the form
\bea
\om = (\sin\th\cos\phi,\sin\th\sin\phi,\cos\th)\label{normal.coord}
\eea
in this coordinate system.  We refer the readers to \cite[Theorem 3]{GS86} for a proof of this decomposition.

We will use the schematic notation that $K_S$ and $K_T$ are the $6$ dimensional vectors 
$$K_S=(E^1_S,E^2_S,E^3_S,B^1_S,B^2_S,B^3_S),\quad K_T=(E^1_T,E^2_T,E^3_T,B^1_T,B^2_T,B^3_T).$$ 
The above representation formulae \eqref{ET.id}-\eqref{BS.id} imply that

\begin{proposition}\label{GS.rep.schem}
Each of $K_S$ and $K_T$ can be written as an integral over the past light cone with a kernel $H_S(\om,\pel)$ and $H_T(\om,\pel)$ respectively:
$$K_T=\int_{C_{t,x}}\int_{\Rt} \frac{H_T(\om,\pel)}{(t-s)^2}f(s,y,\pel) d\pel\, d\sigma,$$
$$K_S=\int_{C_{t,x}}\int_{\Rt} \frac{H_S(\om,\pel)}{t-s}(\tilde{K} f)(s,y,\pel) d\pel\, d\sigma, $$
where\footnote{On the domain of integration, we will often write $\omega =\frac{x-y}{|x-y|}=\frac{x-y}{t-s}$. Here, however, the $y$ derivative is taken such that $\om$ is a function of $x$ and $y$ alone.} $\omega =\frac{x-y}{|x-y|}$, $H_T=(H_T^E, H_T^B)$ is a six dimensional vector and $H_S=(H_S^E, H_S^B)$ is a 6-by-3 matrix and each of the components obeys the bounds
\begin{multline*}
|(H_T)_i|\ls \frac{1}{\pel_0^2(1+\vh\cdot\om)^{\frac 32}},\quad |\nab_{\pel} (H_T)_i|\ls \pel_0,\\
|\nab_{y} (H_T)_i|\ls \frac{\pel_0^2}{|y-x|},\quad |\vh\cdot\nab_{y} (H_T)_i|\ls \frac{\pel_0}{|y-x|}
\end{multline*}
and 
\begin{multline*}
|(H_S)_{ij}|\ls \frac{1}{\pel_0(1+\vh\cdot\om)},\quad|\nab_{\pel} (H_S)_{ij}|\ls \pel_0,\\
|\nab_{y} (H_S)_{ij}|\ls \frac{\pel_0^2}{|y-x|},\quad |\vh\cdot\nab_{y} (H_S)_{ij}|\ls \frac{\pel_0}{|y-x|}.
\end{multline*}
\end{proposition}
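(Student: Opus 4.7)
The representation formulas are obtained immediately by grouping the six components from \eqref{ET.id}--\eqref{BS.id} into the vectors $K_T$ and $K_S$; the content of the proposition is entirely in the six kernel bounds, which I would verify by direct computation from the explicit expressions for $H_T^E$, $H_T^B$, $H_S^E$, $H_S^B$. The algebraic backbone of these estimates is the identity
\[
|\om+\vh|^2 = 2(1+\vh\cdot\om) - (1-|\vh|^2),
\]
which yields $|\om+\vh| \ls (1+\vh\cdot\om)^{1/2}$, together with the mass-shell relation $1-|\vh|^2 = \pel_0^{-2}$ and the lower bound $1+\vh\cdot\om \ge 1-|\vh| \gtrsim \pel_0^{-2}$. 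An analogous manipulation of $|\vh|^2 - (\vh\cdot\om)^2$ produces the ``square-root gains'' $|\vh\times\om|$, $|\vh-(\vh\cdot\om)\om|$, $|\om-(\om\cdot\vh)\vh|$, all $\ls (1+\vh\cdot\om)^{1/2}$; these are essential throughout.

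For the pointwise bounds, direct substitution combined with the estimates above gives $|H_T| \ls \pel_0^{-2}(1+\vh\cdot\om)^{-3/2}$, and each of the summands in $H_S^E$ and $H_S^B$ yields $\pel_0^{-1}(1+\vh\cdot\om)^{-1}$. For the momentum derivative I apply the Leibniz rule with $|\nab_\pel \vh| \ls \pel_0^{-1}$, $|\nab_\pel(1-|\vh|^2)| \ls \pel_0^{-3}$, and the refined identity $\nab_\pel(\vh\cdot\om) = \pel_0^{-1}(\om - (\vh\cdot\om)\vh)$ giving $|\nab_\pel(\vh\cdot\om)| \ls \pel_0^{-1}(1+\vh\cdot\om)^{1/2}$; collecting all contributions produces bounds of the form $\pel_0^{-3}(1+\vh\cdot\om)^{-2}$, and then $1+\vh\cdot\om \gtrsim \pel_0^{-2}$ converts this to the claimed $\pel_0$.

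For the spatial derivatives I use $\nab_y\om = -|x-y|^{-1}(I - \om\otimes\om)$, yielding $|\nab_y\om| \ls |x-y|^{-1}$ together with the crucial contracted estimate
\[
|\vh\cdot\nab_y\om| = |x-y|^{-1}|\vh - (\vh\cdot\om)\om| \ls (1+\vh\cdot\om)^{1/2}/|x-y|.
\]
Since $\nab_y(1+\vh\cdot\om) = \vh\cdot\nab_y\om$, differentiating the denominator automatically picks up this gain, giving $|\nab_y H_T| \ls \pel_0^2/|x-y|$ after using $1+\vh\cdot\om \gtrsim \pel_0^{-2}$. For $\vh\cdot\nab_y H_T$, every contribution benefits from the refined estimate, yielding one additional factor of $(1+\vh\cdot\om)^{1/2}$ and hence the sharper $\pel_0/|x-y|$. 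The corresponding $H_S$ bounds are verified by the same strategy, one summand at a time.

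The main obstacle is careful bookkeeping: the claimed bounds are essentially sharp, and the argument does not survive wasteful estimation. The underlying geometric fact is that each vectorial quantity appearing in the numerators---$\om+\vh$, $\om\times\vh$, $\om-(\om\cdot\vh)\vh$, $\vh-(\vh\cdot\om)\om$---vanishes on the set where $1+\vh\cdot\om$ attains its minimum (namely when $\om$ is antiparallel to $\vh/|\vh|$), and does so precisely at the rate $(1+\vh\cdot\om)^{1/2}$; this cancellation is exactly what compensates for the worsening denominators when one differentiates, and omitting it anywhere degrades the powers of $\pel_0$ by at least one.
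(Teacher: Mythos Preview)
Your approach is correct and essentially identical to the paper's: both proceed by direct computation from the explicit kernels \eqref{ET.id}--\eqref{BS.id}, relying on the same elementary identities $1-|\vh|^2=\pel_0^{-2}$, $|\om+\vh|^2\le 2(1+\vh\cdot\om)$, $|\vh\times\om|^2\le 2(1+\vh\cdot\om)$, the formula $\nab_{y}\om=|x-y|^{-1}(I-\om\otimes\om)$, and the bound $1+\vh\cdot\om\gtrsim\pel_0^{-2}$. The only point where your sketch is slightly imprecise is the $\vh\cdot\nab_y$ estimate on the denominator: there one actually needs the full-power bound $|\vh|^2-(\vh\cdot\om)^2\ls 1+\vh\cdot\om$ (as in the paper's computation \eqref{dysing}), not merely the square-root gain, but you have already recorded this identity so the argument goes through.
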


\begin{proof}
We first notice the elementary bounds
\bea
1-|\vh|^2=\frac{1}{\pZ^2},\label{basic.ineq.1}
\eea
\bea
(\om_i+\vh_i)^2 \leq |\om+\vh|^2 \leq 2+2\vh\cdot\om= 2\sing,
\label{basic.ineq.2}
\eea
and
\bea
|\vh\times\om|^2\leq 2(1+\vh\cdot\om).\label{basic.ineq.3}
\eea
The inequality \eqref{basic.ineq.3} follows from the computation below:
$$
\pZ(1+\vh\cdot\om)=\sqrt{1+|\pel|^2}+\pel\cdot\om
=\frac{1+|\pel|^2-(\pel\cdot\om)^2}{\pZ-\pel\cdot\om}
\geq\frac{1+|\pel\times\om|^2}{2\pZ}.
$$
We also use that 
$
\om_i-\vh_i(\vh\cdot\om)=(\om_i+\vh_i)-\vh_i(1+\vh\cdot\om). 
$
Then using \eqref{basic.ineq.1}, \eqref{basic.ineq.2} and \eqref{basic.ineq.3}, the bounds for $|(H_T)_i|$ and $|(H_S)_{ij}|$ can be read off directly from \eqref{ET.id}-\eqref{BS.id}.
To derive the bounds for the derivatives of the kernels, notice that the bound for the derivative of the singularity in terms of $\pel_0$ is not worse than the bound for the singularity itself (using also \eqref{sing.est.3D} below):
\begin{equation}\label{basic.ineq.4}
\nab_{\pel^i}\frac{1}{1+\vh\cdot\om}=\frac{\om_i-\vh_i(\vh\cdot\om)}{\pel_0(1+\vh\cdot\om)^2}
\ls \pel_0^2.
\end{equation}
For the spatial derivatives, we have
\begin{equation}\label{dom}
\nab_{y^i}\om_j=\frac{\delta_{ij}-\om_i\om_j}{|x-y|},
\end{equation}
which implies 
\begin{multline*}
-\nab_{y^i}\frac{1}{1+\vh\cdot\om}=\frac{\vh_i-\om_i(\vh\cdot\om)}{|x-y|(1+\vh\cdot\om)^2}\\
=\frac{(\vh_i+\om_i)-\om_i(1+\vh\cdot\om)}{|x-y|(1+\vh\cdot\om)^2}\ls \frac{1}{|x-y|(1+\vh\cdot\om)^{\frac 32}}
\end{multline*}
and
\begin{multline}\label{dysing}
-\vh\cdot\nab_{y}\frac{1}{1+\vh\cdot\om}=\frac{|\vh|^2-(\vh\cdot\om)^2}{|x-y|(1+\vh\cdot\om)^2}\\
=\frac{(|\vh|^2-1)+(1-\vh\cdot\om)(1+\vh\cdot\om)}{|x-y|(1+\vh\cdot\om)^2}\ls \frac{1}{|x-y|(1+\vh\cdot\om)}.
\end{multline}
After differentiating \eqref{ET.id}-\eqref{BS.id}, these estimates together with \eqref{basic.ineq.1}, \eqref{basic.ineq.2} and \eqref{basic.ineq.3} imply the desired conclusions.
\end{proof}

Using the bounds on $H_T$ and $H_S$, we have: 

\begin{proposition}\label{T.prop}  We have the following estimates 
$$|K_T(t,x)|\ls \int_{C_{t,x}}\int_{\Rt} \frac{f(s, y, \pel)}{(t-s)^2\pel_0^2(1+\vh\cdot\om)^{\frac 32}} d\pel\, d\sigma,$$
and
$$
|K_{S}(t,x)|\ls \int_{C_{t,x}}\int_{\Rt} \frac{(|K| f)(s, y, \pel)}{(t-s)\pZ(1+\vh\cdot\om)} d\pel\, d\sigma.
$$
\end{proposition}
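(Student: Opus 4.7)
The plan is to derive Proposition \ref{T.prop} directly from the representation formulas in Proposition \ref{GS.rep.schem} by taking absolute values inside the integrals and then inserting the pointwise bounds already established for the kernels $H_T$ and $H_S$.

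First I would handle $K_T$. Starting from the identity
\beaa
K_T=\int_{C_{t,x}}\int_{\Rt} \frac{H_T(\om,\pel)}{(t-s)^2}f(s,y,\pel)\, d\pel\, d\sigma
\eeaa
from Proposition \ref{GS.rep.schem}, and using the non-negativity $f\ge 0$, the triangle inequality under the integral sign gives
\beaa
|K_T(t,x)|\le \int_{C_{t,x}}\int_{\Rt} \frac{|H_T(\om,\pel)|}{(t-s)^2}f(s,y,\pel)\, d\pel\, d\sigma.
\eeaa
The component bound $|(H_T)_i|\ls \pel_0^{-2}(1+\vh\cdot\om)^{-3/2}$ from Proposition \ref{GS.rep.schem} then yields the claimed estimate for $|K_T|$ immediately.

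For $K_S$ the same argument applies, but one additionally needs to bound $|\tilde K|$ by $|K|$. Since $\tilde K=E+\vh\times B$ and $|\vh|<1$, one has $|\tilde K|\le |E|+|B|\ls |K|$. Then from
\beaa
K_S=\int_{C_{t,x}}\int_{\Rt} \frac{H_S(\om,\pel)}{t-s}(\tilde K f)(s,y,\pel)\, d\pel\, d\sigma
\eeaa
and the kernel bound $|(H_S)_{ij}|\ls \pel_0^{-1}(1+\vh\cdot\om)^{-1}$, one obtains the desired estimate for $|K_S|$.

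There is essentially no obstacle here: the proposition is an immediate combinatorial consequence of the representation formulas together with the pointwise kernel bounds of Proposition \ref{GS.rep.schem} and the trivial inequality $|\tilde K|\ls |K|$. The substantive analytic content has already been absorbed into Proposition \ref{GS.rep.schem}, so the proof will be essentially a one-line verification for each of the two estimates.
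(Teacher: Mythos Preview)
Your proposal is correct and matches the paper's approach: the paper does not even write out a separate proof but simply states that the proposition follows from the kernel bounds on $H_T$ and $H_S$ established in Proposition \ref{GS.rep.schem}. Your explicit unpacking (triangle inequality, non-negativity of $f$, and $|\tilde K|\ls |K|$ since $|\vh|<1$) is exactly the intended justification.
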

We now prove some additional estimates for each of these pieces so that they are amenable to using Strichartz estimates. For $K_T$, we will prove a one-parameter family of bounds so that we can get the full range of exponents in Theorem \ref{main.theorem.2}.

\begin{proposition}\label{3D.kt.ep.prop}
For every $\gamma\in [0,2)$ and any small $\ep>0$ and $\ep'>0$, we have
$$
|K_T(t,x)|^{2+\ep'}
\ls 
\int_{C_{t,x}} \frac{\big(\int_{\Rt}f \pel_0^{\frac{2+\gamma}{2-\gamma}+\ep}d\pel\big)^{\left(1-\frac{\gamma}{2}\right)(2+\ep')
}}{(t-s)} d\sigma.
$$
And in the case $\gamma =0$ we can take $\ep = 0$.
\end{proposition}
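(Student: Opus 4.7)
The plan is to prove the estimate by combining two applications of Hölder's inequality, one in the momentum variable $\pel$ and one on the cone $C_{t,x}$, with the crucial pointwise bound $(1+\vh\cdot\om)^{-1} \ls \pel_0^2$ from the proof of Proposition \ref{GS.rep.schem}. The starting point is the kernel bound $|K_T(t,x)| \ls \int_{C_{t,x}} \int_{\Rt} f/((t-s)^2 \pel_0^2 (1+\vh\cdot\om)^{3/2})\, d\pel\, d\sigma$ provided by Proposition \ref{T.prop}.

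For $\gamma \in (0,2)$ and at each fixed $(s,y,\om)$, I would first apply Hölder in $\pel$ with conjugate exponents $2/(2-\gamma)$ and $2/\gamma$, splitting the integrand as $(f\pel_0^\alpha)^{1-\gamma/2}$ against the residual $f^{\gamma/2}\pel_0^{-\alpha(1-\gamma/2)}(1+\vh\cdot\om)^{-3/2}$. This produces the leading factor $\bigl(\int f\pel_0^\alpha\, d\pel\bigr)^{1-\gamma/2}$ together with a second factor $\bigl(\int f\pel_0^{-\alpha(2-\gamma)/\gamma}(1+\vh\cdot\om)^{-3/\gamma}\, d\pel\bigr)^{\gamma/2}$. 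The bound $(1+\vh\cdot\om)^{-1} \ls \pel_0^2$ converts the angular singularity to a weight $\pel_0^{6/\gamma}$, and the choice $\alpha = (2+\gamma)/(2-\gamma) + \ep$ makes the net exponent of $\pel_0$ inside this second factor equal to $(2-\alpha(2-\gamma))/\gamma = -1-\ep(2-\gamma)/\gamma < 0$, so that the second factor is controlled by $\bigl(\int f\, d\pel\bigr)^{\gamma/2}$ using $\pel_0\ge 1$. Next I would apply Hölder on $C_{t,x}$ with exponent $q = 2+\ep'$ and conjugate $q' = q/(q-1)$, splitting the weight $(t-s)^{-2}$ as $(t-s)^{-1/q}\cdot(t-s)^{-(2q-1)/q}$. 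Raising to the $q$-th power, the first Hölder factor yields exactly $\int_{C_{t,x}} \bigl(\int f\pel_0^\alpha\, d\pel\bigr)^{(1-\gamma/2)(2+\ep')}/(t-s)\, d\sigma$, while the second factor produces a residual of the form $\bigl(\int_0^t (t-s)^{-1/(q-1)}\int_{S^2}(\int f\, d\pel)^{\gamma q'/2}\, d\Omega\, ds\bigr)^{q-1}$, which is absorbed into constants using $\int f\, d\pel \le \int f\pel_0^\alpha\, d\pel$ and the conservation laws of Section \ref{sec.cons.law}; the time integral $\int_0^t (t-s)^{-1/(q-1)}\, ds$ converges precisely because $q>2$.

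The hard part is the exponent bookkeeping in the momentum Hölder: the specific value $\alpha = (2+\gamma)/(2-\gamma)+\ep$ is forced by requiring $(2-\alpha(2-\gamma))/\gamma$ to be strictly negative, which is exactly what allows the residual $\pel_0$-weight to collapse to $\bigl(\int f\, d\pel\bigr)^{\gamma/2}$. Similarly, the strict inequality $\ep'>0$ is essential because the cone residual $\int_0^t (t-s)^{-1/(q-1)}\, ds$ diverges at the endpoint $q=2$. The degenerate case $\gamma = 0$ is handled more directly, bypassing the momentum Hölder entirely: the bound $\int f/(\pel_0^2 (1+\vh\cdot\om)^{3/2})\, d\pel \le \int f\pel_0\, d\pel$ follows immediately from $(1+\vh\cdot\om)^{-3/2} \ls \pel_0^3$, which is why $\ep$ can be taken to zero in that case.
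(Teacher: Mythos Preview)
Your argument for $\gamma=0$ is fine and matches the paper. For $\gamma\in(0,2)$, however, there is a genuine gap in the step where you ``absorb the residual into constants.'' After your momentum H\"older and the crude bound $(1+\vh\cdot\om)^{-3/\gamma}\ls \pel_0^{6/\gamma}$, the residual factor is $\bigl(\int_{\Rt} f\,d\pel\bigr)^{\gamma/2}$, which still depends on the spacetime point $(s,y)$ on the cone. When you then apply H\"older on $C_{t,x}$, this factor lands in the second integral, producing
\[
\int_0^t (t-s)^{-1/(q-1)}\int_{S^2}\Bigl(\int_{\Rt} f(s,x+(t-s)\om,\pel)\,d\pel\Bigr)^{\gamma q'/2}\,d\Omega\,ds.
\]
This is a sphere-integral of a power of the particle density $\rho$, and nothing in Section~\ref{sec.cons.law} bounds it uniformly in $(t,x)$: the conservation laws only control $\|\rho\|_{L^{4/3}_x}$ (via $\|f\pel_0\|_{L^1_xL^1_\pel}$) and $\|f\|_{L^r_xL^r_\pel}$, not surface integrals of $\rho^p$. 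Replacing $\int f\,d\pel$ by $\int f\pel_0^\alpha\,d\pel$ does not help either, since that merely reproduces the main factor with the wrong exponent and collapses the estimate back to the $\gamma=0$ case.

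The paper avoids this by splitting the angular singularity \emph{before} H\"older: write $(1+\vh\cdot\om)^{-3/2}=(1+\vh\cdot\om)^{-(3-\gamma)/2}(1+\vh\cdot\om)^{-\gamma/2}$, bound only the first piece by $\pel_0^{3-\gamma}$, and keep the second. After the momentum H\"older (with $f$ placed entirely in the $L^{2/(2-\gamma)}$ factor, using $f^{2/(2-\gamma)}\le\|f\|_{L^\infty}^{\gamma/(2-\gamma)}f$), the residual becomes the \emph{solution-independent} kernel integral
\[
\int_{\Rt}\frac{d\pel}{\pel_0^{3+\frac{2\ep}{\gamma}(1-\frac{\gamma}{2})}(1+\vh\cdot\om)},
\]
which is shown to be $\ls 1$ uniformly in $\om$ by the refined estimate $(1+\vh\cdot\om)^{-1}\ls\min\{\th^{-2},\pel_0^2\}$ and an explicit polar-coordinate computation. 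That computation is precisely the ingredient your approach is missing; the crude bound $(1+\vh\cdot\om)^{-1}\ls\pel_0^2$ alone is not enough, since $\int_{\Rt}\pel_0^{-1-\delta}\,d\pel=\infty$ for small $\delta$.
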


When we use this proposition later on we always consider the case $\ep = \ep'>0$.

\begin{proof}
First consider $\gamma \in (0,2)$.  
We apply H{\"o}lder's inequality in $\pel$ with exponents $\frac{1}{q}=\frac{\gamma}{2}$ and 
$\frac{1}{q'}=1-\frac{\gamma}{2}$ to the estimate for $K_T$ from Proposition \ref{T.prop} 
 to get
\begin{multline}\label{KT.3D.1}
|K_T(t,x)|
\ls 
\int_{C_{t,x}}\int_{\Rt} \frac{f(s,y,\pel)}{(t-s)^2\pel_0^2(1+\vh\cdot\om)^{\frac 32}} d\pel\, d\sigma
\\
\ls 
\int_{C_{t,x}} 
(t-s)^{-2}\left(\int_{\Rt} \frac{d\pel}{\pel_0^{3+\frac{2\ep}{\gamma}(1-\frac{\gamma}{2})}(1+\vh\cdot\om)}
\right)^{\frac{\gamma}{2}}
\left(\int_{\Rt}f(s,y,\pel)\pel_0^{\frac{2+\gamma}{2-\gamma}+\ep}d\pel
\right)^{1-\frac{\gamma}{2}} d\sigma.
\end{multline}
Here to get this estimate for $K_T$, we first controlled a fraction of the singularity by
$$
\frac{1}{\pel_0^2}\left(\frac{1}{1+\vh\cdot\om}\right)^{\frac{3}{2} - \frac{\gamma}{2}}
\ls \frac{1}{\pel_0^2}\left(\pel_0^2\right)^{\frac{3}{2} - \frac{\gamma}{2}}
\ls \pel_0^{1 - \gamma}.
$$
To obtain this inequality we used the following estimate (with $\th$ defined below)
\begin{equation}\label{sing.est.3D}
\frac{1}{1+\vh\cdot\om}\ls \min\left\{\th^{-2},\pel_0^2\right\}.
\end{equation}
Then we split $\pel_0^{1 - \gamma} =\pel_0^{1 - \gamma - \alpha} ~  \pel_0^{\alpha } $ where $\pel_0^{1 - \gamma - \alpha}$ goes with the first term and $\pel_0^{\alpha }$ goes with the second term with $\alpha = \frac{2+\gamma}{2}+\ep/q'$. 

To obtain \eqref{sing.est.3D}, we can express the $d\pel$ integral in polar coordinates $(|\pel|,\th,\phi)$ such that the angle $\th\in (-\pi,\pi]$ is defined to be the angle between $\vh$ and $-\om$, i.e.,
$$
-\vh\cdot\om=|\vh| \cos\th.
$$
Then  \eqref{sing.est.3D} follows.  We now {\it claim} that
$\int_{\Rt} \frac{d\pel}{\pel_0^{3+\frac{2\ep}{\gamma}(1-\frac{\gamma}{2})}(1+\vh\cdot\om)}\ls 1.$
Indeed
\beaa
& &\int_{\Rt} \frac{d\pel}{\pel_0^{3+\frac{2\ep}{\gamma}(1-\frac{\gamma}{2})}(1+\vh\cdot\om)}\\
&\ls &\int_0^{|\pel|}\int_0^{2\pi}\int_{(-\pi,\pi]\setminus [-\pel_0^{-1},\pel_0^{-1}]} \frac{1}{\pel_0^{3+\frac{2\ep}{\gamma}(1-\frac{\gamma}{2})}\th^2} |\pel|^2 \sin\th d\th\,d\phi\,d|\pel|\\
& &+\int_0^{|\pel|}\int_0^{2\pi}\int_{-\pel_0^{-1}}^{\pel_0^{-1}} \frac{\pel_0^2}{\pel_0^{3+\frac{2\ep}{\gamma}(1-\frac{\gamma}{2})}} |\pel|^2\sin\th d\th\,d\phi\,d|\pel|\\
&\ls &\int_0^{|\pel|} \frac{(1+\log\pel_0)}{\pel_0^{1+\frac{2\ep}{\gamma}(1-\frac{\gamma}{2})}} d|\pel|
\ls  1.
\eeaa
This verifies our claim.  

Returning to \eqref{KT.3D.1}, we thus have
$$
|K_T(t,x)|
\ls 
\int_{C_{t,x}} 
(t-s)^{-2}
\left(\int_{\Rt}f(s,y,\pel)\pel_0^{\frac{2+\gamma}{2-\gamma}+\ep}d\pel
\right)^{1-\frac{\gamma}{2}} d\sigma.
$$
Notice that in the case $\gamma =0$ we can also obtain this inequality with $\ep=0$; to do so we simply use the previous pointwise estimate \eqref{sing.est.3D}.

Using H\"older's inequality in $d\sigma$ with $q=2+\ep'$ and $q'=\frac{2+\ep'}{1+\ep'}$, 
and splitting the time singularity as
$
(t-s)^{2} = (t-s)^{\frac{3+2\ep'}{2+\ep'}}(t-s)^{\frac{1}{2+\ep'}}
$
we have
\begin{multline} \notag
\int_{C_{t,x}} 
(t-s)^{-\frac{3+2\ep'}{2+\ep'}} (t-s)^{-\frac{1}{2+\ep'}}
\left(\int_{\Rt}f(s,y,\pel)\pel_0^{\frac{2+\gamma}{2-\gamma}+\ep}d\pel \right)^{1-\frac{\gamma}{2}} d\sigma
\\
\ls 
\left(\int_{C_{t,x}} (t-s)^{-\frac{3+2\ep'}{1+\ep'}} d\sigma 
\right)^{\frac {1+\ep'}{2+\ep'}}
\\
\times
\left(\int_{C_{t,x}} \frac{\big(\int_{\Rt}f(s,y,\pel)\pel_0^{\frac{2+\gamma}{2-\gamma}+\ep}d\pel\big)^{\frac{(2-\gamma)(2+\ep')}{2}}}{(t-s)} d\sigma \right)^{\frac 1{2+\ep'}}.
\end{multline}
The conclusion thus follows after noting that 
$$
\int_{C_{t,x}} (t-s)^{-\frac{3+2\ep'}{1+\ep'}} d\sigma \ls 1
$$
since $\frac{3+2\ep'}{1+\ep'}<3$.
\end{proof}
For $K_S$, we will simply use the trivial bound in which we control the electromagnetic field by its absolute value and estimate the singularity by $\pel_0^2$ as in \eqref{sing.est.3D}.

\begin{proposition}\label{kS.trivial.bd}
We have the following estimate
$$
|K_S(t,x)|\ls \int_{C_{t,x}}\int_{\Rt} \frac{\pel_0 |K| f(s,y, \pel)}{t-s} d\pel\, d\sigma.
$$
\end{proposition}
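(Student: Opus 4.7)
The plan is to obtain this bound as an immediate consequence of the $K_S$ estimate already established in Proposition \ref{T.prop} together with the pointwise singularity bound \eqref{sing.est.3D}. In particular, from Proposition \ref{T.prop} we have
\[
|K_S(t,x)| \ls \int_{C_{t,x}}\int_{\Rt} \frac{(|K|f)(s,y,\pel)}{(t-s)\,\pZ\,(1+\vh\cdot\om)} \, d\pel\, d\sigma,
\]
so the entire content of the statement reduces to absorbing the angular singularity $\frac{1}{1+\vh\cdot\om}$ into a power of $\pel_0$.

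The key step is to invoke the pointwise estimate \eqref{sing.est.3D}, which gives
\[
\frac{1}{1+\vh\cdot\om} \ls \pel_0^2
\]
uniformly in $\om$. Inserting this into the previous display yields
\[
\frac{1}{\pZ(1+\vh\cdot\om)} \ls \frac{\pel_0^2}{\pel_0} = \pel_0,
\]
and substituting this pointwise replacement into the integrand immediately produces the desired inequality
\[
|K_S(t,x)| \ls \int_{C_{t,x}}\int_{\Rt} \frac{\pel_0\,|K|\,f(s,y,\pel)}{t-s}\, d\pel\, d\sigma.
\]

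There is no real obstacle here: the proposition is explicitly labeled as a ``trivial bound'' because one deliberately trades the fine angular structure of the kernel for a simple momentum weight $\pel_0$. The only thing to be careful about is to apply \eqref{sing.est.3D} using the worst-case bound $\pel_0^2$ (rather than the $\th^{-2}$ branch), which is legitimate since the resulting weight is integrated against $f$ in the statement, with no further exploitation of the angular geometry being required later when this estimate is combined with the moment bounds for $f$.
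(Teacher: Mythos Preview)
Your proof is correct and follows exactly the approach indicated in the paper: the sentence immediately preceding the proposition states that one simply uses the trivial bound obtained by controlling the electromagnetic field by its absolute value and estimating the singularity by $\pel_0^2$ as in \eqref{sing.est.3D}, which is precisely what you do starting from Proposition \ref{T.prop}.
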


\subsection{Decomposition for the first derivatives of $K$}\label{sec.derv.decomp.K}

In addition to the above representation for $K$, Glassey-Strauss also derived representations for the derivatives of $K$. Slightly abusing notation, we denote by $\nab_x K_S$ and $\nab_x K_T$ all spatial derivatives of $K_S$, $K_T$. It is shown by Glassey-Strauss\footnote{See in particular Theorem 4 from \cite{GS86}.} \cite{GS86} that $\nabla_x K_S$ and $\nabla_x K_T$ can be further decomposed into $\nab_xK_{SS}$, $\nab_xK_{ST}$, $\nab_xK_{TS}$ and $\nab_xK_{TT}$ which obey the following estimates

%\footnote{In the decomposition of $E$ and $B$, there are also terms $(E)_0$ and $(B)_0$ involving initial data. Recall from the discussion in Section \ref{sec.dec.K} that their first derivatives can be controlled by the initial data norms.}

\begin{proposition}[Glassey-Strauss \cite{GS86}]\label{dKdecomposition}
The derivatives of the terms\footnote{In the decomposition of $E$ and $B$, there are again terms $(E)_0$ and $(B)_0$ involving initial data. Recall again from the discussion in Section \ref{sec.dec.K} that their first derivatives can be controlled by the initial data norms.} $K_S$ and $K_T$ from Proposition \ref{GS.rep.schem} can be further decomposed as
$$
\nab_x K_S=\nab_xK_{SS}+\nab_xK_{ST},\quad \nab_xK_T=\nab_xK_{TS}+\nab_xK_{TT}
$$
such that they obey the following estimates:  
$$
|\nab_xK_{SS}(t,x)|\ls \int_{C_{t,x}} \int_{\mathbb R^3} \frac{\pel_0^3(|K|^2f)(s,y,\pel)}{t-s}d\pel\, d\sigma,
$$
\begin{equation*}
\begin{split}
|\nab_xK_{ST}(t,x)|\ls &\mbox{Data}+\int_{C_{t,x}} \int_{\mathbb R^3} \frac{\pel_0^3\,\big(|K| f\big)(s,y,\pel)}{(t-s)^2}d\pel\, d\sigma\\
&+\int_{C_{t,x}} \int_{\mathbb R^3} \frac{\pel_0^3\,\big(\left(|\nab_y{K}|+ \rho\right)f\big)(s,y,\pel)}{(t-s)}d\pel\, d\sigma,
\end{split}
\end{equation*}
$$|\nab_xK_{TS}(t,x)|\ls 
\int_{C_{t,x}} \int_{\mathbb R^3} \frac{\pel_0^3(|K|f)(s,y,\pel)}{(t-s)^2}d\pel\, d\sigma,
$$
and for any $\delta\in (0,t)$ we have 
\begin{equation*}
\begin{split}
|\nab_xK_{TT}(t,x)|\ls &\mbox{Data}+\int_{C_{t,x}\cap\{0\leq s\leq t-\delta\}} \int_{\mathbb R^3} \frac{\pel_0^3\,f(s,y,\pel)}{(t-s)^3}d\pel\, d\sigma\\
&+\int_{C_{t,x}\cap\{t-\delta\leq s\leq t\}} \int_{\mathbb R^3} \frac{\pel_0\,|\nabla_{y} f|(s,y,\pel)}{(t-s)^2}d\pel\, d\sigma\\
&+\int_{|y-x|=\delta}\int_{\mathbb R^3} \frac{\pZ^3\, f(s=t-\delta,y,\pel)}{\delta^2}d\pel\, dS,
\end{split}
\end{equation*}
where $\mbox{Data}$ denotes a term that is bounded\footnote{As before, these ``Data" terms can be controlled by $$\frac 1{t^2}\int_{|y-x|=t}\int_{\mathbb R^3}\pel_0 f_0 d\pel dS,\quad \frac 1{t^2}\int_{|y-x|=t}\int_{\mathbb R^3}\pel_0 |K_0| f_0 d\pel dS.$$ The first term can be controlled by \eqref{ini.bd.3}. The second term can be estimated using \eqref{ini.bd.3} after controlling $K_0$ in $L^\infty_x$ by the Sobolev embedding theorem using \eqref{ini.bd.6}.} depending only on the initial data norms \eqref{ini.bd.2.5} - \eqref{ini.bd.6} for $f_0$, $E_0$ and $B_0$.
\end{proposition}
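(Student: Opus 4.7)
The plan is to follow Glassey--Strauss \cite{GS86}: apply $\nab_x$ to the representation formulas from Proposition~\ref{GS.rep.schem} and use the chain-rule identity that on the past light cone (where $y = x + (t-s)\om$) the $x$-derivatives can be rewritten as $-\nab_y$ plus angular derivatives in $\om$, so that all derivatives end up acting on $f$ (and on $\tilde K$) rather than on the $(t-s)^{-k}$ factors. A direct differentiation of $K_T$ produces an $(t-s)^{-3}$ weight, which is not integrable near the apex $s=t$ of the three-dimensional cone; the key is to exploit the Vlasov equation~\eqref{vlasov} to trade spatial derivatives of $f$ for temporal and momentum derivatives, and then integrate by parts to transfer those derivatives onto the kernels and onto $\tilde K$.

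Concretely, decompose $\nab_y f = \vh(\vh \cdot \nab_y f) + \bigl(\nab_y - \vh(\vh\cdot\nab_y)\bigr)f$. For the longitudinal part, substitute $\vh \cdot \nab_y f = -\rd_s f - \tilde K \cdot \nab_\pel f$ from~\eqref{vlasov}; integration by parts in $\pel$ against $\tilde K$ produces, via $|\nab_\pel H_{S,T}| \ls \pel_0$ from Proposition~\ref{GS.rep.schem}, the bulk terms with an extra $|\tilde K|$ factor appearing in $\nab_xK_{SS}$ and $\nab_xK_{TS}$, while integration by parts in $s$ yields the initial data contributions and the boundary terms. The transverse component is an angular derivative on the sphere of integration, and integrating by parts in $\om$ exchanges one power of $(t-s)$ for an angular derivative on the kernel; combined with the direct action of $\nab_y$ on $H_S$ and on $\tilde K$ (the latter producing the $|\nab_y K|$ contribution and, through the constraint $\nab_y\cdot E = \rho$, the $\rho$ contribution in $\nab_xK_{ST}$), this yields the singularities $(t-s)^{-2}$ and $(t-s)^{-1}$ with the stated $\pel_0^3$ weights obtained from the bounds in Proposition~\ref{GS.rep.schem} together with \eqref{sing.est.3D}.

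The critical piece is $\nab_xK_{TT}$: even after the longitudinal-Vlasov substitution, the bulk contribution still carries $(t-s)^{-3}$ against $\pel_0^3 f$, which is borderline non-integrable up to $s=t$. Following \cite{GS86}, introduce the cutoff $\delta \in (0,t)$ and split the cone into $[0, t-\delta]$ and $[t-\delta, t]$: on the far region simply retain the $\pel_0^3 f /(t-s)^3$ form (harmless since $t - s \geq \delta$), while on the near region avoid the $s$-integration by parts entirely and keep the milder $\pel_0 |\nab_y f| /(t-s)^2$ representation. The interface at $s = t - \delta$ produces precisely the sphere integral on $|y-x| = \delta$ appearing in the statement, while the boundary at $s = t$ of the near region contributes nothing because the sphere has zero radius there. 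The main technical obstacle is the careful bookkeeping of $\pel_0$ weights through each successive integration by parts, and verifying that the angular integration by parts really does regularize the $(t-s)^{-3}$ singularity by one power in the $TS$, $SS$, and $ST$ contributions while the $TT$ contribution genuinely requires the $\delta$-regularization.
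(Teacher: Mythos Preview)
Your overall strategy is right---decompose $\partial_{y^i}$ into a piece that invokes the Vlasov equation and a piece that is tangent to the cone, then integrate by parts---but the specific decomposition you propose does not do this. You split $\nab_y f = \vh(\vh\cdot\nab_y f) + (\nab_y - \vh(\vh\cdot\nab_y))f$ and then claim the transverse part ``is an angular derivative on the sphere of integration.'' It is not: the sphere of radius $t-s$ is centered at $x$, so angular (cone-tangent) derivatives are orthogonal to $\om$, not to $\vh$. Moreover, for the longitudinal part you substitute $\vh\cdot\nab_y f = -\partial_s f - \tilde K\cdot\nab_\pel f$ and then ``integrate by parts in $s$''; but on the cone $f$ is evaluated at $(s,x+(t-s)\om,\pel)$, so $\frac{d}{ds}$ of this composite is $\partial_t f - \om\cdot\nab_y f$, not $\partial_t f$ alone. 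The $\om\cdot\nab_y f$ correction reintroduces exactly the type of term you were trying to eliminate.

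The paper (following \cite{GS86}) instead uses the decomposition
\[
\partial_{y^i} \;=\; \frac{\om_i}{1+\vh\cdot\om}\,S \;+\; b_{ij}(\om,\pel)\,T_j,
\qquad S=\partial_t + \vh\cdot\nab_y,\quad T_j = -\om_j\partial_t + \partial_{y^j},
\]
with $b_{ij}=\delta_{ij}-\frac{\om_i\vh_j}{1+\vh\cdot\om}$. Here $T_j$ is genuinely tangent to the cone $\{|y-x|=t-s\}$, so that $T_j[f(s,y,\pel)|_{s=t-|y-x|}] = \partial_{y^j}[f(t-|y-x|,y,\pel)]$ and one can integrate by parts in $y$ over the solid ball; this is what regularizes the singularity by one power of $(t-s)$. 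For the $S$ piece one uses $Sf=-\tilde K\cdot\nab_\pel f$ directly (no $s$-integration by parts is needed). The coefficient $\frac{\om_i}{1+\vh\cdot\om}$ and the matrix $b_{ij}$ carry the $(1+\vh\cdot\om)^{-1}$ singularity, and it is the careful tracking of these factors---including the cancellation in $b_{ij}\partial_{y^j}H$ coming from the improved bound on $\vh\cdot\nab_y H$ in Proposition~\ref{GS.rep.schem}---that produces exactly the $\pel_0^3$ weights in the statement. Your outline omits this coefficient entirely, so the weight bookkeeping cannot close.

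A smaller point: the $\rho$ term in $\nab_x K_{ST}$ does not come from the constraint $\nab_y\cdot E=\rho$. It arises because $T_j$ contains $-\om_j\partial_t$, and $\partial_t\tilde K$ is expressed through the Maxwell evolution equations \eqref{maxwell}, which bring in the current $j$ (bounded by $\rho$). Your $\delta$-cutoff treatment of $\nab_x K_{TT}$, on the other hand, is exactly what the paper does.
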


\begin{proof}
Differentiating the representation in Proposition \ref{GS.rep.schem} in $x$, we obtain
\begin{equation}\label{dK.schem}
\begin{split}
\frac{\partial}{\partial x^i} K_S=&
\int_{C_{t,x}}\int_{\Rt} \frac{H_S(\om,\pel)}{t-s}((\frac{\partial \tilde{K}}{\partial y^i}) f+\tilde{K}(\frac{\partial f}{\partial y^i} ))(s,y,\pel) d\pel\, d\sigma, 
\\
\frac{\partial}{\partial x^i} K_T=&\int_{C_{t,x}}\int_{\Rt} \frac{H_T(\om,\pel)}{(t-s)^2}(\frac{\partial f}{\partial y^i}) (s,y,\pel) d\pel\, d\sigma.
\end{split}
\end{equation}
The main observation in \cite{GS86} is that the vector field $\frac{\partial}{\partial y^i}$ can be decomposed into
$$
\frac{\partial}{\partial y^i}= \frac{\om_i }{1+\vh\cdot\om}S + b_{ij}(\om, \pel) T_j,
$$
where 
$
b_{ij}(\om, \pel)
\eqdef
\left( \delta_{ij} - \frac{\om_i \vh_j }{1+\vh\cdot\om} \right), 
$
$$
S=\frac{\partial}{\partial t}+\sum_{i=1}^3 \vh^i\frac{\partial}{\partial y^i},
$$
and $T=(T_1, T_2, T_3)$.  Here the tangential operators are given by
$$T_i=-\om_i\frac{\partial}{\partial t}+\frac{\partial}{\partial y^i}.$$
Notice that we have the bounds
\begin{equation}\label{b.bd}
|b_{ij}(\om, \pel)|\ls \pel_0^2
\end{equation}
and, again implicitly summing over the repeated $j$ index, we have 
\begin{multline}\label{db.bd}
|\nab_{y^j}b_{ij}(\om, \pel)|
\ls 
|\frac{(\delta_{ij}-\om_i\om_j)\vh_j}{|x-y|(1+\vh\cdot\om)}|
+
|\om_i ~ \vh\cdot \nab_{y}\frac{1}{(1+\vh\cdot\om)}|
\\
\ls |\frac{(\vh_i-\om_i(\om \cdot \vh))}{|x-y|(1+\vh\cdot\om)}|+|\frac{\om_i(|\vh|^2-(\vh\cdot\om)^2)}{|x-y|(1+\vh\cdot\om)^2}|\\
\ls |\frac{\vh_i+\om_i}{|x-y|(1+\vh\cdot\om)}|+|\frac{1}{|x-y|}|+|\frac{1-|\vh|^2}{|x-y|(1+\vh\cdot\om)^2}|+\frac{1}{|x-y|(1+\vh\cdot\om)}\\
\ls \frac{\pel_0^2}{|x-y|}.
\end{multline}
Here, we have used the computations in \eqref{dom} and \eqref{dysing} and also the estimates \eqref{basic.ineq.1} and \eqref{basic.ineq.2}. 
Note that the estimate \eqref{db.bd} together with \eqref{dysing} are the key to obtain the $\pel_0^3$ upper bounds as in the statement of this proposition.
We can now substitute the decomposition of $\frac{\partial}{\partial y^i} f$ into \eqref{dK.schem}.

To control the terms in \eqref{dK.schem} containing the vector field $T_j$, notice that we can rewrite the integral over the cone in the coordinates of $\mathbb R^3$ so that
$$\int_{C_{t,x}} F(s,y) d\sigma=\int_{|y-x|\leq t} F(t-|y-x|,y) dy.$$
In this coordinate system, we have
$$(T_j f)(t-|y-x|,y,\pel)=\frac{\partial}{\partial y^j} (f(t-|y-x|,y,\pel)),$$
and we can therefore integrate by parts in $y^j$.

Given this decomposition, we denote the parts of $\nab_xK_{S}$ that include $T_j f$ and $\frac{\partial \tilde{K}}{\partial y^i}$ by $\nab_xK_{ST}$ (and denote the remaining term as $K_{SS}$).  Then for this $\nab_xK_{ST}$ term,  using the integration by parts as described above,
after also using \eqref{maxwell} to control the time derivatives of the fields,
we have
\begin{equation*}
\begin{split}
|\nab_{x_i} K_{ST}| \ls &\int_{|y-x|=t}\int_{\Rt} 
|\frac{H_S(\om,\pel)}{t}  b_{ij}\om_j |(|{K}| f)(0,y,\pel) d\pel\, dS\\
&+\int_{C_{t,x}}\int_{\Rt} |\nab_{y^j} (\frac{H_S(\om,\pel)}{|x-y|}  b_{ij}  )|(|{K}| f)(s,y,\pel) d\pel\, d\sigma\\
&+\int_{C_{t,x}}\int_{\Rt} \frac{|H_S(\om,\pel)|}{(t-s)}(\left(|(\delta_{ij}-b_{ij})\nab_{y_j}{K}|+ |b_{ij}\om_j|\rho\right) f)(s,y,\pel) d\pel\, d\sigma,
\end{split}
\end{equation*}
where repeated indices are summed over.

Here, and below,  
we use the convention that for a matrix $A$, $|A|$ denotes the sum of the absolute values of each component of the matrix. 
The first term depends only on the initial data and can be bounded by the initial data norms as required in the statement of the proposition using Proposition \ref{GS.rep.schem} and \eqref{b.bd}. 
For the second term, we have
\begin{equation}\label{dker.bd}
\begin{split}
&|\nab_{y^j} (\frac{H_S(\om,\pel)}{|x-y|}  b_{ij}  )|\\
\ls &
\frac{|b_{ij}\nab_{y^j} H_S(\om,\pel)|}{|x-y|}+\frac{|H_S(\om,\pel)||\nab_{y_j}b_{ij}|}{|x-y|}+\frac{|H_S||b|}{|x-y|^2}\ls \frac{\pel_0^3}{(t-s)^2},
\end{split}
\end{equation}
where we are using Proposition \ref{GS.rep.schem}, \eqref{b.bd}, \eqref{db.bd} and the obvious estimate $|\nab_y\frac{1}{|y-x|}|\leq \frac{1}{|y-x|^2}=\frac{1}{(t-s)^2}$ on the domain of integration. One can also easily check that the third term obeys the desired estimate using Proposition \ref{GS.rep.schem} and \eqref{b.bd}. Notice that there is a cancellation in $|b_{ij}\nab_{y^j} H_S(\om,\pel)|$ using the estimate for $|\vh \cdot \nab_{y} H_S(\om,\pel)_{ij}|$ in Proposition \ref{GS.rep.schem} so that this estimate is crucially better than 
the naive estimate for $|b| ~|\nab_{y} H_S|$.  This concludes the proof of the estimates for the $\nab_xK_{ST}$ terms.

We now turn to the second term in \eqref{dK.schem}, which we denote by \eqref{dK.schem}$_2$. We split into $C_{t,x}\cap\{t-\delta\leq s\leq t\}$ and $C_{t,x}\cap\{0\leq s\leq t-\delta\}$ for some small $0<\delta<t$.    The part of \eqref{dK.schem}$_2$ which is restricted to $C_{t,x}\cap\{t-\delta\leq s\leq t\}$ is part of 
$\nab_x K_{TT}$ and is estimated above by 
$$
\int_{C_{t,x}\cap\{t-\delta\leq s\leq t\}} \int_{\mathbb R^3} \frac{\pel_0\,|\nabla_{y} f|(s,y,\pel)}{(t-s)^2}d\pel\, d\sigma,
$$
which is in acceptable form.    

Next we decompose $\frac{\partial}{\partial y^i}$ as above.  The part of \eqref{dK.schem}$_2$ on $C_{t,x}\cap\{0\leq s\leq t-\delta\}$ which includes the $T_j f$ term is the rest of the $\nab_x K_{TT}$ term.  Here we only integrate by parts on the truncated cone $C_{t,x}\cap\{0\leq s\leq t-\delta\}$ because the derivative of the kernel will be too singular near the vertex of the cone. As a result, we will get an extra boundary term. More precisely, we have 
\begin{equation*}
\begin{split}
|\nab_x K_{TT}| \ls &\int_{|y-x|=t}\int_{\Rt} |\frac{H_T(\om,\pel)}{t^2} b_{ij}(\om, \pel)  \om_j | f(0,y,\pel) d\pel\, dS\\
&+\int_{|y-x|=\delta}\int_{\Rt} |\frac{H_T(\om,\pel)}{\delta^2} b_{ij}(\om, \pel)  \om_j | f(s=t-\delta,y,\pel) d\pel\, dS\\
&+\int_{C_{t,x}\cap\{0\leq s\leq t-\delta\}}\int_{\Rt} |\nab_{y^j} (\frac{H_T(\om,\pel)}{|x-y|^2}b_{ij}(\om, \pel) )|f(s,y,\pel) d\pel\, d\sigma\\
&+
\int_{C_{t,x}\cap\{t-\delta\leq s\leq t\}} \int_{\mathbb R^3} \frac{\pel_0\,|\nabla_{y} f|(s,y,\pel)}{(t-s)^2}d\pel\, d\sigma.
\end{split}
\end{equation*}
The first term depends only on the initial data and can be bounded by the initial data norms as required in the statement of the proposition. That the remaining terms satisfy the desired estimates follows from Proposition \ref{GS.rep.schem}, \eqref{b.bd} and \eqref{db.bd}. Notice in particular that the third term can be bounded in a manner similar to \eqref{dker.bd}. This concludes the proof of the bounds for the $\nab_x K_{TT}$ term.

To treat the terms containing the vector field $S$, notice that for a solution to  the Vlasov equation \eqref{vlasov}:
$$Sf=-\tilde{K}\cdot\nab_{\pel}f.$$
We can then integrate by parts in $\pel$. We denote the remaining terms arising from $\nab_x K_S$ and $\nab_x K_T$ where we have $Sf $ in the decomposition of $\frac{\partial}{\partial y^i}f$ as $\nab_x K_{SS}$ and $\nab_x K_{TS}$ respectively.  We then have the estimates
$$
|\nab_x K_{SS}| \ls \int_{C_{t,x}}\int_{\Rt} \left( | \frac{H_S(\om,\pel)}{\pZ (1+\vh\cdot\om)}|+|\nab_{\pel} (\frac{H_S(\om,\pel)}{1+\vh\cdot\om})| \right)
\frac{(|{K}|^2 f)(s,y,\pel)}{t-s} d\pel\, d\sigma
$$
and
$$|\nab_x K_{TS}| \ls \int_{C_{t,x}\cap\{0\leq s\leq t-\delta\}}\int_{\Rt} |\nab_{\pel} (\frac{ H_T(\om,\pel)}{1+\vh\cdot\om})|\frac{(|{K}| f)(s,y,\pel)}{(t-s)^2} d\pel\, d\sigma.$$
Using the bounds for $H_S(\om,\pel)$, $\nab_{\pel}H_S$ and $\nab_{\pel}H_T$ in Proposition \ref{GS.rep.schem} and the estimates \eqref{basic.ineq.4} and \eqref{sing.est.3D}, we have the desired bounds for these two terms.   
\end{proof}

\subsection{Estimates for the second derivatives of $K$}

Unlike the $2$-dimensional and the $2\frac 12$-dimensional cases, we also need to estimate the second derivatives of $K$ in addition to $K$ and its first derivatve. This is because in the local existence theorem (Theorem \ref{theorem.local.existence}), we also need the boundedness of $\nab^2K$ in order to guarantee that the solution can be continued. The bound for $\nab^2 K$ below can be derived in a straightforward manner using the representation of the electromagnetic fields in Glassey-Strauss \cite{GS86}:

\begin{proposition}\label{d2Kdecomposition}
The electromagnetic field $K$ obeys the following second derivative bounds:
\begin{equation*}
\begin{split}
|\nab_x^2K(t,x)|\ls &\mbox{Data}+\int_{C_{t,x}} \int_{\mathbb R^3} \frac{\pel_0(|\nab_x^2 K|f)(s,y,\pel)}{t-s}d\pel\, d\sigma\\
&+\int_{C_{t,x}} \int_{\mathbb R^3} \frac{\pel_0(|\nab_x K||\nab_{x}f|)(s,y,\pel)}{t-s}d\pel\, d\sigma\\
&+\int_{C_{t,x}} \int_{\mathbb R^3} \frac{\pel_0(| K||\nab_{x}^2f|)(s,y,\pel)}{t-s}d\pel\, d\sigma\\
&+\int_{C_{t,x}} \int_{\mathbb R^3} \frac{\pel_0(|\nab_{x}^2f|)(s,y,\pel)}{(t-s)^2}d\pel\, d\sigma.
\end{split}
\end{equation*}
where $\mbox{Data}$ denotes a term that is bounded\footnote{These terms can be controlled by $$\frac 1{t^2}\int_{|y-x|=t}\int_{\mathbb R^3}\pel_0 |\nab_{x,\pel} f_0| d\pel dS,\quad \frac 1{t^2}\int_{|y-x|=t}\int_{\mathbb R^3}\pel_0 |K_0||\nab_{x,\pel}f_0| d\pel dS,$$
and $$\frac 1{t^2}\int_{|y-x|=t}\int_{\mathbb R^3}\pel_0 |\nab_x K_0| f_0 d\pel dS.$$ The first term can be controlled by \eqref{ini.bd.4}. The second term can be controlled using \eqref{ini.bd.4} after controlling $K_0$ in $L^\infty_x$ by the Sobolev embedding theorem using \eqref{ini.bd.6}. Finally, the third term can be bounded by \eqref{ini.bd.3} after estimating $K_0$ in $L^\infty_x$ by Sobolev embedding using \eqref{ini.bd.6}.} depending only on the initial data norms \eqref{ini.bd.2.5} - \eqref{ini.bd.6} for $f_0$, $E_0$ and $B_0$.
\end{proposition}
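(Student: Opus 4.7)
The plan is to apply $\nab_x^2$ directly to the Glassey--Strauss representations from Proposition \ref{GS.rep.schem}, without any further integration by parts. The crucial observation is that when we parametrize the past light cone $C_{t,x}$ using $(s,\om)$ as in \eqref{cone.vol.form}, so that $y=x+(t-s)\om$ and $d\sigma=(t-s)^2\sin\th\,d\th\,d\phi\,ds$, the variables $(s,\om,\pel)$ and the measure $d\sigma$ are all independent of $x$. In particular the kernels $H_T(\om,\pel)$ and $H_S(\om,\pel)$ are $x$-independent. Consequently $\nab_x$ passes under the integral and acts on the integrand only through the $y$-dependence, with $\nab_x=\nab_y$ there.

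Applied to $K_T$, this procedure yields
\[
\nab_x^2 K_T=\int_{C_{t,x}}\int_{\Rt}\frac{H_T(\om,\pel)}{(t-s)^2}\nab_y^2 f(s,y,\pel)\,d\pel\,d\sigma,
\]
and the pointwise estimate $|H_T|\ls \pel_0^{-2}(1+\vh\cdot\om)^{-3/2}\ls \pel_0$, obtained from Proposition \ref{GS.rep.schem} together with \eqref{sing.est.3D}, produces the last integral in the statement. Applying $\nab_x^2$ to $K_S$ and expanding $\nab_y^2(\tilde K f)$ by the Leibniz rule gives three pieces: $(\nab_y^2 \tilde K)\,f$, $(\nab_y\tilde K)\cdot\nab_y f$, and $\tilde K\cdot\nab_y^2 f$. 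Since $\vh$ is $y$-independent, $|\nab_y^k\tilde K|\ls |\nab_y^k K|=|\nab_x^k K|(s,y)$ for $k=0,1,2$; combined with $|H_S|\ls \pel_0^{-1}(1+\vh\cdot\om)^{-1}\ls \pel_0$ (again via \eqref{sing.est.3D}), these three pieces produce the first three integrals asserted in the statement.

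For the Data term, I would differentiate the initial-data contributions $(E)_0,(B)_0$ from Section \ref{sec.dec.K} twice in $x$. The resulting sphere integrals over $|y-x|=t$ are bounded using \eqref{ini.bd.3}--\eqref{ini.bd.4} for the $f_0$-dependent pieces and Sobolev embedding applied to \eqref{ini.bd.6} for the pieces depending on $K_0$, exactly as in the analogous data estimates in Propositions \ref{T.prop} and \ref{dKdecomposition}. All such contributions are then absorbed into the Data term.

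The only mildly delicate point is verifying carefully that in the $(s,\om)$ parametrization the kernels $H_T,H_S$ are genuinely $x$-independent, so that $\nab_x^2$ passes as $\nab_y^2$ onto $f$ and $\tilde K$ with no additional boundary or angular contributions. Once this is established, the remainder reduces to pointwise kernel bounds of exactly the same type already used in Propositions \ref{T.prop} and \ref{dKdecomposition}, so no new techniques are required; in contrast to Proposition \ref{dKdecomposition}, no decomposition into $S$ and $T$ vector fields is needed, because the statement already allows the top-order $\nab_x^2 f$ term to appear directly on the right-hand side.
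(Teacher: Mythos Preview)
Your approach is correct and is exactly what the paper intends: its entire proof reads ``This is a straightforward consequence of Proposition~\ref{GS.rep.schem},'' i.e.\ differentiate the cone representations twice in $x$ in the $(s,\om)$-parametrization, use that the kernels and measure are $x$-independent, and bound $|H_T|,|H_S|\ls \pel_0$ via \eqref{sing.est.3D}. One minor correction on the Data term: differentiating $(K)_0$ twice produces sphere integrals involving $\nab_x^2 f_0$ and up to $\nab_x^3 K_0$ (see the first footnote in Section~\ref{sec.dec.K}), so you should invoke \eqref{ini.bd.5.5} and \eqref{ini.bd.2.5} (together with \eqref{ini.bd.6} and Sobolev embedding) rather than only \eqref{ini.bd.3}--\eqref{ini.bd.4}; with that adjustment the Data term is bounded by the initial data norms as required.
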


\begin{proof}
This is a straightforward consequence of Proposition \ref{GS.rep.schem}.
\end{proof}

\section{Proof of the first continuation criteria (Theorem \ref{main.theorem.1})}\label{sec.pf.GS}
Using the above preliminaries, we proceed to the proof of Theorem \ref{main.theorem.1} recalling \eqref{GScriterion.K}. We will apply the Glassey-Strauss decomposition in the previous section to show that if 
\bea\label{assumption.mt1}
\sup_{t\in [0,T_*),x,\pel\in\mathbb R^3} \int_0^{T_*} ds |K(s,X(s;t,x,\pel ))| \, ds\ls 1,
\eea
then we have  $\mathcal A(t)$ from \eqref{A.t.def} satisfies  $\mathcal A(t) \ls 1.$  This will imply Theorem \ref{main.theorem.1} via the local existence theorem (Theorem \ref{theorem.local.existence}).
By the ODE's \eqref{char1} and \eqref{char2} for the characteristics of the Vlasov equation, the $\pel$-distance ``travelled'' by a characteristic is bounded by a constant depending only on 
$$
\left\| \int_0^{T_*} |K(s,X(s;t,x,\pel ))| \, ds
\right\|_{L^\infty_t([0,T_*); L^\infty_x L^\infty_\pel)}.
$$ 
We define the supremum of the derivatives of the forward characteristics by
\begin{equation}\label{fowC.def} 
\fowC(t) \eqdef 1+\sup_{s\in [0,t],x,\pel\in \mathbb R^3} \left(\left|\nabla_{x,\pel} X(s;0,x,\pel)\right|+\left|\nabla_{x,\pel} V(s;0,x,\pel)\right|\right),
\end{equation}
and we similarly define the supremum along the backward characteristics by
\begin{equation}\label{bakC.def} 
\bakC(t) \eqdef 1+\sup_{s\in [0,t],x,\pel\in \mathbb R^3} \left(\left|\nabla_{x,\pel} X(0;s,x,\pel)\right|+\left|\nabla_{x,\pel} V(0;s,x,\pel)\right|\right).
\end{equation}
We then have the following uniform estimates

\begin{proposition}\label{prelim.est}
Under the assumptions of Theorem \ref{main.theorem.1}, the following bounds hold:
\begin{equation}\label{fbd}
\left\|\int_{\mathbb R^3} f(t,x,\pel) \pel_0^3 d\pel
\right\|_{L^\infty_t([0,T_*); L^\infty_x)}
\ls 1,
\end{equation}
\begin{equation}\label{dfbd}
\begin{split}
\|\int_{\mathbb R^3} \left|(\nabla_{x,\pel} f)(t,x,\pel) \right| \pel_0^3 d\pel\|_{L^\infty_x}
\ls & \bakC(t),
\end{split}
\end{equation}
and
\begin{equation}\label{dfbd.2}
\begin{split}
\|\int_{\mathbb R^3} \left|(\nabla_{x,\pel} f)(t,x,\pel) \right|^2 w_3(\pel)^2 d\pel\|_{L^\infty_x}
\ls & \bakC(t)^2.
\end{split}
\end{equation}
\end{proposition}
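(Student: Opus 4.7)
The plan is to exploit conservation of $f$ along characteristics, together with the hypothesis \eqref{assumption.mt1}, to reduce all three integrals to corresponding integrals of the initial data.

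First, I would record the basic structural observation. Since $f$ is constant along characteristics, we have
\[
f(t,x,\pel)=f_0\bigl(X(0;t,x,\pel),\,V(0;t,x,\pel)\bigr).
\]
Next, I would show that there exists $R=R(T_*)>0$, independent of $(t,x,\pel)\in[0,T_*)\times\mathbb R^3\times\mathbb R^3$, such that
\[
|X(0;t,x,\pel)-x|+|V(0;t,x,\pel)-\pel|\le R.
\]
The spatial part is immediate from \eqref{char1} and $|\vh|<1$ (finite speed of propagation gives $|X(0;t,x,\pel)-x|\le T_*$). For the momentum part, integrating the ODE \eqref{char2} backward from time $t$ to time $0$ and using $|\vh\times B|\le|B|$, one gets $|V(0;t,x,\pel)-\pel|\le \int_0^{t}|K(s,X(s;t,x,\pel))|\,ds$, which is controlled by assumption \eqref{assumption.mt1}. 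Hence we may write $X(0;t,x,\pel)=x+y_0$, $V(0;t,x,\pel)=\pel+w_0$ for some $(y_0,w_0)$ with $|y_0|+|w_0|\le R$.

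With this setup, the bound \eqref{fbd} follows immediately: for any $\pel$,
\[
f(t,x,\pel)\,\pel_0^3 = f_0(x+y_0,\pel+w_0)\,\pel_0^3 \le \sup_{|y|+|w|\le R}\bigl\{f_0(x+y,\pel+w)\,\pel_0^3\bigr\},
\]
and integrating in $\pel$ and taking $L^\infty_x$ gives $\ls 1$ by the initial-data hypothesis \eqref{ini.bd.3}. For \eqref{dfbd} and \eqref{dfbd.2}, the only additional ingredient is the chain rule: differentiating $f(t,x,\pel)=f_0(X(0;t,x,\pel),V(0;t,x,\pel))$ in $(x,\pel)$ yields
\[
|\nab_{x,\pel}f(t,x,\pel)|\le \bigl(|\nab_{x,\pel}X(0;t,x,\pel)|+|\nab_{x,\pel}V(0;t,x,\pel)|\bigr)\cdot |(\nab_{x,\pel}f_0)(X_0,V_0)|,
\]
and by the definition \eqref{bakC.def} the prefactor is at most $\bakC(t)$. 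Then the same supremum trick, now with the initial-data assumptions \eqref{ini.bd.4} and \eqref{ini.bd.5} in place of \eqref{ini.bd.3}, yields \eqref{dfbd} and \eqref{dfbd.2} respectively (the squaring in \eqref{dfbd.2} produces the $\bakC(t)^2$ factor, and the weight $w_3^2$ rather than $\pel_0^3$ appears automatically because that is the weight appearing in \eqref{ini.bd.5}).

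There is no real obstacle: the content of the proposition is essentially that the three initial-data hypotheses \eqref{ini.bd.3}--\eqref{ini.bd.5} are preserved by the Vlasov flow once one knows, as a consequence of \eqref{assumption.mt1}, that characteristics do not travel arbitrarily far in phase space. The only technical point to be careful about is verifying the uniform-in-$(t,x,\pel)$ bound $|V(0;t,x,\pel)-\pel|\le R$, which is where \eqref{assumption.mt1} enters; once this is established, the remainder is a chain-rule computation and an application of the appropriate initial-data bound.
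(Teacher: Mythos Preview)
Your proposal is correct and follows essentially the same approach as the paper: establish the uniform bound $|X(0;t,x,\pel)-x|+|V(0;t,x,\pel)-\pel|\le R$ from finite speed of propagation and the assumption \eqref{assumption.mt1}, then use the formula $f(t,x,\pel)=f_0(X(0;t,x,\pel),V(0;t,x,\pel))$ together with the chain rule and the initial-data hypotheses \eqref{ini.bd.3}--\eqref{ini.bd.5}. If anything, your write-up is more explicit than the paper's, which simply records the displacement bound and then states that all three estimates follow from the chain rule and the initial-data assumptions.
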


\begin{proof}
We first make a preliminary observation regarding the change in $x$ and $\pel$ along characteristics. Notice that since $|\vh|\leq 1$, for $T_*<\infty$, by integrating in time along \eqref{char1} we observe that there exists $R$ such that $\sup_{t\in[0,T_*)}|X(0;t,x,\pel)-x|\leq \frac{R}{2}$. Similarly, by \eqref{assumption.mt1}, after integrating along \eqref{char2} there exists a possibly different $R$ such that $\sup_{t\in[0,T_*)}|V(0;t,x,\pel)-\pel|\leq \frac{R}{2}$.

To prove the estimates stated in the proposition, we integrate along the characteristics \eqref{char1} and \eqref{char2} with \eqref{char.data} to obtain the standard formula
\begin{equation}\label{along.char}
f(t,x,\pel) = f_0(X(0;t,x,\pel), V(0;t,x,\pel)).
\end{equation}
Then all the estimates follow immediately from the chain rule, the initial data bounds \eqref{ini.bd.3}, \eqref{ini.bd.4} and \eqref{ini.bd.5}, as well as the fact that the maximal $x$ difference and the maximal $\pel$ difference along a characteristic is uniformly bounded on the bounded time interval $[0,T_*)$ as observed above.
\end{proof}

Using the Glassey-Strauss decomposition, Proposition \ref{prelim.est} (more precisely, \eqref{fbd}) immediately implies the boundedness of $K$:

\begin{proposition}\label{Kinftybd} 
The electromagnetic field $K$ obeys the following $L^\infty$ estimate:
$$\|K\|_{L^\infty_t([0,T_*);L^\infty_x)}\ls 1.$$
\end{proposition}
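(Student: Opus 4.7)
The plan is to reduce to the classical Glassey-Strauss estimate \eqref{GS.apriori.2} using the fact that, by Proposition \ref{prelim.est}, the moment bound \eqref{fbd} already holds in our non-compactly supported setting as a consequence of the assumption \eqref{assumption.mt1}.

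First, using the decomposition $4\pi K = (K)_0 + K_S + K_T$ from Section \ref{sec.dec.K}, the term $(K)_0$ is pointwise bounded in terms of the surface integrals of $\pel_0 f_0$, $K_0$ and $\nab_x K_0$ over spheres centered at $x$; by Cauchy--Schwarz on each sphere together with the initial data hypotheses \eqref{ini.bd.3} and \eqref{ini.bd.6} and the Sobolev embedding $H^2(\mathbb R^3)\hookrightarrow L^\infty(\mathbb R^3)$, this contribution is uniformly bounded on $[0,T_*)\times \mathbb R^3$ by a constant.

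Next I would estimate $K_T$. Starting from Proposition \ref{T.prop} and writing $d\sigma = (t-s)^2 \sin\theta\, d\theta\, d\phi\, ds$ as in \eqref{cone.vol.form}, one has
\[
|K_T(t,x)| \ls \int_0^t ds\int_{S^2} d\Omega \int_{\Rt}\frac{f(s,x+(t-s)\om,\pel)}{\pel_0^2\sing^{3/2}}\, d\pel .
\]
The classical Glassey-Strauss manipulation (performing a change of variables from $(\omega,|\pel|)$ to suitable new variables which decouples the angular singularity from the $\pel$ integration, as in \cite{GS86}) converts the right-hand side into
\[
|K_T(t,x)| \ls \int_0^t \Bigl\| \int_{\Rt}\pel_0^3 f(s,\cdot,\pel)\, d\pel\Bigr\|_{L^\infty_x} ds .
\]
The analogous argument applied to $K_S$, again using Proposition \ref{T.prop} together with the estimate $(1+\vh\cdot\om)^{-1}\ls \pel_0^2$ from \eqref{sing.est.3D}, yields
\[
|K_S(t,x)| \ls \int_0^t \|K(s)\|_{L^\infty_x}\Bigl\|\int_{\Rt}\pel_0^3 f(s,\cdot,\pel)\, d\pel\Bigr\|_{L^\infty_x}\, ds .
\]

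Finally, I would plug the uniform bound \eqref{fbd} from Proposition \ref{prelim.est} into both displays and combine with the bound on $(K)_0$ to obtain
\[
\|K(t)\|_{L^\infty_x} \ls 1 + \int_0^t \|K(s)\|_{L^\infty_x}\, ds ,
\]
at which point Gr\"onwall's inequality on the bounded interval $[0,T_*)$ delivers the claim $\|K\|_{L^\infty_t([0,T_*);L^\infty_x)}\ls 1$.

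The main (essentially only) obstacle is the derivation of the $\int\pel_0^3 f\,d\pel$ bounds for $K_T$ and $K_S$ above: namely, carrying out the $d\Omega$-integration against the kernel $(1+\vh\cdot\om)^{-3/2}$ (respectively $(1+\vh\cdot\om)^{-1}$) uniformly in $\pel$, while simultaneously exchanging the spatial dependence of $f$ on $y = x+(t-s)\om$ with the $\pel$ integration. This is the standard Glassey-Strauss computation, and the only new input relative to \cite{GS86} is the replacement of their compact momentum support bound by the uniform moment estimate \eqref{fbd}, which is precisely what Proposition \ref{prelim.est} provides.
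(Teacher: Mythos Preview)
Your overall strategy is the same as the paper's: decompose $K=(K)_0+K_S+K_T$, bound the data term directly, control $K_S$ and $K_T$ by moments of $f$, invoke \eqref{fbd}, and close with Gronwall. That is exactly what the paper does.

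The only difference is that you make the $K_T$ step harder than it needs to be. You propose integrating the angular singularity $(1+\vh\cdot\om)^{-3/2}$ in $\om$ via a change of variables, and you flag as an obstacle the coupling between the $\om$-integration and the spatial argument $y=x+(t-s)\om$ of $f$. The paper avoids this entirely: it simply uses the pointwise bound $(1+\vh\cdot\om)^{-1}\ls \pel_0^2$ from \eqref{sing.est.3D}, which gives $(1+\vh\cdot\om)^{-3/2}\ls \pel_0^3$ and hence
\[
|K_T(t,x)|\ls \int_{C_{t,x}}\int_{\Rt}\frac{\pel_0 f(s,y,\pel)}{(t-s)^2}\,d\pel\,d\sigma
\ls \int_0^t \Bigl\|\int_{\Rt}\pel_0 f(s,\cdot,\pel)\,d\pel\Bigr\|_{L^\infty_x}ds,
\]
since $d\sigma=(t-s)^2\,d\Omega\,ds$ and the $S^2$-integration is then trivial. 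The same pointwise bound (which you already invoke for $K_S$) gives the analogous estimate for $K_S$. Thus the paper arrives at
\[
\|K(t)\|_{L^\infty_x}\ls \mbox{Data}+\int_0^t(\|K(s)\|_{L^\infty_x}+1)\,\|f(s)\pel_0\|_{L^\infty_xL^1_\pel}\,ds,
\]
with only a $\pel_0$ weight (your $\pel_0^3$ is not wrong, just wasteful). The ``obstacle'' you identify is not present: no angular integration against the singular kernel is ever needed.
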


\begin{proof}
By Proposition \ref{T.prop} and the bound \eqref{sing.est.3D}
$(\frac{1}{1+\vh\cdot\om}\ls \pel_0^2),$ 
we have
$$\|K(t)\|_{L^\infty_x} \ls \mbox{Data}+\int_0^t (\|K(s)\|_{L^\infty_x}+1) \|f(s)\pel_0\|_{L^\infty_x L^1_{\pel}} ds.$$
Therefore, by \eqref{fbd}, we have
$$\|K(t)\|_{L^\infty_x} \ls \mbox{Data}+\int_0^t (\|K(s)\|_{L^\infty_x}+1)  ds.$$
The conclusion then follows by Gronwall's inequality.
\end{proof}

To proceed, we will use the following lemma bounding the derivatives of the backward characteristics by the derivatives forward characteristics (see Klainerman-Staffilani \cite[Lemma 3.1]{KS}).  For completeness, we will prove the following lemma in all of the different dimensional cases ($2$D, $2\frac 12$D, and $3$D).  See our companion paper \cite{LSSE} for the definitions of the notation in $2$D and $2\frac 12$D.

\begin{lemma}\label{lemm.forw.back}  For any $t\in [0, T_*]$ we have 
$$
\bakC(t) \ls \fowC(t)^{3+i},
$$
where $i=0$ in $2$D, $i=1$ in $2\frac 12$D, and $i=2$ in $3$D.
\end{lemma}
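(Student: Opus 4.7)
The plan is to exploit the fact that the forward and backward characteristic flows on phase space are mutual diffeomorphisms, and to combine the inverse function theorem, Cramer's rule, and Liouville's theorem. For each fixed $s\in[0,t]$, introduce the forward flow map
$$\Phi_s(x,\pel) \eqdef \bigl(X(s;0,x,\pel),\, V(s;0,x,\pel)\bigr).$$
By uniqueness of solutions to the characteristic ODEs \eqref{char1}--\eqref{char2} with the data \eqref{char.data}, $\Phi_s$ is a diffeomorphism whose inverse is exactly the backward flow, $\Phi_s^{-1}(y,q)=(X(0;s,y,q),\,V(0;s,y,q))$. The inverse function theorem then gives
$$D\Phi_s^{-1}\bigl|_{(y,q)} \, = \, \bigl(D\Phi_s\bigl|_{\Phi_s^{-1}(y,q)}\bigr)^{-1},$$
so the task reduces to inverting the Jacobian of $\Phi_s$.

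To estimate the inverse matrix I use Cramer's rule: for any invertible $n\times n$ matrix $A$, each entry of $A^{-1}$ is (up to sign) an $(n-1)\times(n-1)$ minor of $A$ divided by $\det A$, hence
$$|A^{-1}|\ls_n \frac{|A|^{n-1}}{|\det A|}.$$
It therefore suffices to (i) bound $|D\Phi_s|$ from above, which after taking suprema is exactly controlled by $\fowC(t)$, and (ii) bound $|\det D\Phi_s|$ from below.

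For (ii) the key is Liouville's theorem: the characteristic vector field $(\hat{V},\,E+\hat{V}\times B)$ on phase space is divergence-free. Indeed, $\nabla_x\cdot\hat{V}=0$ because $\hat{V}$ depends only on $\pel$; and since $E$ is independent of $\pel$ and
$\partial_{\pel^i}\hat{V}_j = \delta_{ij}/\pel_0 - \pel^i\pel^j/\pel_0^3$
is symmetric in $(i,j)$, the antisymmetry of $\varepsilon_{ijk}$ forces $\nabla_\pel\cdot(\hat{V}\times B)=\varepsilon_{ijk}(\partial_{\pel^i}\hat{V}_j)B_k=0$. So the flow preserves phase space volume and $\det D\Phi_s\equiv 1$.

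Combining (i) and (ii) yields $|D\Phi_s^{-1}|\ls |D\Phi_s|^{n-1}\ls \fowC(t)^{n-1}$, where $n$ is the phase space dimension: $n=4$ in 2D (with $\pel\in\mathbb R^2$), $n=5$ in $2\tfrac12$D (with $\pel\in\mathbb R^3$ but $x\in\mathbb R^2$), and $n=6$ in 3D. These correspond precisely to the exponents $3+i$ in the statement. Taking the supremum over $s\in[0,t]$ and over the base point $(y,q)$ gives $\bakC(t)\ls\fowC(t)^{3+i}$. I do not anticipate a serious obstacle: the only point to verify in each dimensional setting is the divergence-free identity above, which follows in every case from the same elementary computation because the force is a gradient-free function of $(x,\pel)$ with the essential $\hat{V}$-dependence having antisymmetric structure.
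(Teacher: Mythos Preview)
Your proposal is correct and follows essentially the same approach as the paper: both arguments identify the Jacobian of the backward flow as the inverse of the Jacobian of the forward flow, establish $\det D\Phi_s\equiv 1$ via Liouville's theorem, and then invoke Cramer's rule to bound the entries of the inverse by degree-$(n-1)$ polynomials in the entries of $D\Phi_s$. The only cosmetic difference is that the paper verifies the determinant identity by writing down the matrix ODE for $D\Phi_s$ and checking that the relevant trace vanishes, whereas you compute the phase-space divergence of the characteristic vector field directly; these are of course equivalent.
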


We make the observation that \eqref{assumption.mt1} is not used to prove Lemma \ref{lemm.forw.back}.  

\begin{proof}
For notational convenience, in this proof we write $y=(x,\pel) = (y_1, \ldots, y_{d_x+d_{\pel}})$ for the variables and $Y = (X,V) = (Y_1, \ldots, Y_{d_x+d_{\pel}})$ for the characteristics, 
where $d_x+d_{\pel}=4$ in $2$D, $d_x+d_{\pel}=5$ in $2\frac 12$D, and $d_x+d_{\pel}=6$ in $3$D.  
Then we use the notation $\partial_k = \partial_{y_k}$ ($k=1, \ldots,  d_x+d_{\pel}$).

We consider the matrix differential equation satisfied by 
\begin{equation}\label{A.def}
A(s;y)\eqdef (a_{ij}(s; y))
=
(\partial_jY_i)(s;0,y).
\end{equation}
Now in $2$D, $A(s;y)$ is a $4\times 4$ matrix.  
In $2\frac 12$D, $A(s;y)$ is a $5\times 5$ matrix.  
And in $3$D, $A(s;y)$ is a $6\times 6$ matrix.  
Notice further that $A$ satisfies 
$$\frac{d}{ds}A(s;y)=\left( \begin{array}{cc}
0 & *  \\
* & D  \end{array} \right) A(s;y),$$
where in the $2$D case the $2\times 2$ matrix $D$ takes the form
\[D=\left(\begin{array}{ccc}
-\frac{\ph_1\ph_2}{\vZ} B & *  \\
 * & \frac{\ph_3\ph_2 }{\vZ} B \\
\end{array}\right).  
\]
Further in the $2\frac 12$D case and in the $3$D case we have the $3\times 3$ matrix
\[D=\left(\begin{array}{ccc}
-\frac{\ph_1\ph_2}{\vZ}B_3+\frac{\ph_1\ph_3}{\vZ}B_2 & * & * \\
* & -\frac{\ph_2\ph_3}{\vZ}B_1+\frac{\ph_2\ph_1}{\vZ}B_3 & *\\
* & * & -\frac{\ph_3\ph_1}{\vZ}B_2+\frac{\ph_3\ph_2}{\vZ}B_1 \\
\end{array}\right).  
\]
Above we use the notation $*$ to denote a matrix or a scalar whose components will be unimportant for our calculation.  
These formulas imply in all the cases that
$$\det A(s;y)=\det A(0;y) \exp(\int_0^{s} \mbox{tr } D(s';y) ds')=\det A(0;y)=1.$$

Now along the characteristics \eqref{char1}-\eqref{char2} we have by definition that
\begin{equation}\label{char.along.def}
Y (0;s,Y(s;0,y))=y.
\end{equation}
We conclude that
$$
\delta_{ij}  = \partial_{j} \left(Y_{i} (0;s,Y(s;0,y)) \right)
=  \left(\partial_{l} Y_{i} \right) (0;s,Y(s;0,y)) ( \partial_{j}Y_l )(s;0,y),
$$
where we have used the convention that repeated indices are summed over.  Therefore, the matrix 
\begin{equation}\label{A.inv.def}
A^{-1}(s;y) = 
\left( 
a^{ij}(0;s,Y(s;0,y)) \right),
\quad  a^{ij} = \partial_{j} Y_{i},
\end{equation}
is the inverse of $A(s;y)$ from \eqref{A.def}.   Since $\det A(s;y) = 1$, the components of the matrix \eqref{A.inv.def} are bounded above by a $(3+i)$-th order polynomial (for $i$ as in the statement of the lemma) in the components of $A(s;y)$ by Cramer's rule and the lemma follows.  \end{proof}

Proposition \ref{prelim.est} together with Lemma \ref{lemm.forw.back} will allow us to derive higher regularity for $K$ and $f$:

\begin{proposition}\label{prop.higher.reg}
Under the assumptions of Theorem \ref{main.theorem.1}, we have
$$\|\nabla_x K\|_{L^\infty_t([0,T_*); L^\infty_x)}\ls 1.$$
\end{proposition}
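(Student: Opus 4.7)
The plan is to use the decomposition of $\nabla_x K$ provided in Proposition \ref{dKdecomposition}, treat each of the four pieces $\nabla_x K_{SS}$, $\nabla_x K_{ST}$, $\nabla_x K_{TS}$, $\nabla_x K_{TT}$ separately using the a priori bounds from Proposition \ref{prelim.est} and the $L^\infty$ bound on $K$ from Proposition \ref{Kinftybd}, and finally close the estimate via a Gronwall argument coupled with the ODE bound for $\fowC(t)$.

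For the three easier pieces, after passing to polar coordinates \eqref{cone.vol.form}, the factor $(t-s)^2$ from the volume form combines with the kernel singularities $(t-s)^{-1}$ or $(t-s)^{-2}$ to give only mild $(t-s)$-singular integrals. Using \eqref{fbd}, Proposition \ref{Kinftybd}, and the fact that $\rho\lesssim 1$, one bounds $\nab_xK_{SS}$, $\nab_x K_{TS}$, and (most of) $\nab_xK_{ST}$ by an $O(1)$ constant; the remaining contribution from $\nab_xK_{ST}$ containing $|\nab_y K|$ gives a term $\int_0^t \|\nabla_xK(s)\|_{L^\infty_x}\,ds$, which is harmless for a Gronwall argument.

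The main obstacle is the $\nab_x K_{TT}$ piece, where one has to choose the parameter $\delta\in(0,t)$ optimally. For the contribution on $\{0\leq s\leq t-\delta\}$, the $(t-s)^{-3}$ singularity combined with the cone measure produces a logarithm $\log(t/\delta)$ after using \eqref{fbd}. For the contribution on $\{t-\delta\leq s\leq t\}$, the $(t-s)^{-2}$ singularity (only a single power of $\pel_0$) combines with the volume element and the bound \eqref{dfbd} to give $\lesssim \delta\,\bakC(t)$. The boundary term at $|y-x|=\delta$ is $O(1)$ by \eqref{fbd}. Choosing $\delta=\min(t/2,1/\bakC(t))$ balances these and yields
\begin{equation*}
|\nab_x K_{TT}(t,x)|\ls 1+\log\bakC(t).
\end{equation*}
By Lemma \ref{lemm.forw.back}, $\log\bakC(t)\ls \log\fowC(t)$. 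Putting everything together,
\begin{equation*}
\|\nab_x K(t)\|_{L^\infty_x}\ls 1+\log\fowC(t)+\int_0^t \|\nab_x K(s)\|_{L^\infty_x}\,ds.
\end{equation*}

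To close, I differentiate \eqref{char1}--\eqref{char2} and observe that the derivatives of the characteristics satisfy a linear ODE system whose coefficients are bounded by $1+\|\nab_x K(s)\|_{L^\infty_x}$ (using $\|K\|_{L^\infty}\ls 1$ from Proposition \ref{Kinftybd} to handle the $\vh\times B$ terms). Gronwall's inequality then yields
\begin{equation*}
\log\fowC(t)\ls 1+\int_0^t \|\nab_x K(s)\|_{L^\infty_x}\,ds.
\end{equation*}
Substituting this back into the bound for $\|\nab_xK(t)\|_{L^\infty_x}$ and applying Gronwall one more time gives the desired conclusion $\|\nab_x K\|_{L^\infty_t([0,T_*);L^\infty_x)}\ls 1$.
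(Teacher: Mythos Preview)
Your proposal is correct and follows essentially the same route as the paper: decompose $\nabla_x K$ via Proposition \ref{dKdecomposition}, use \eqref{fbd}, \eqref{dfbd} and Proposition \ref{Kinftybd} to bound the pieces, balance $\delta$ in the $\nab_x K_{TT}$ term to produce a $\log\bakC(t)$ loss, and close by coupling with the ODE for $\fowC$ through Lemma \ref{lemm.forw.back}. The only cosmetic differences are your choice $\delta=\min(t/2,1/\bakC(t))$ versus the paper's $\delta=t/(1+\bakC(t))$, and the order in which Gronwall is applied: the paper first Gronwalls the $K$ estimate to obtain $\|\nabla_x K(t)\|_{L^\infty_x}\ls 1+\log\bakC(t)$, then feeds this into the $\fowC$ inequality to get the nonlinear Gronwall $\fowC(t)\ls 1+\int_0^t \fowC(s)(1+\log\fowC(s))\,ds$, whereas you first Gronwall the characteristics ODE to bound $\log\fowC(t)$ and then close with a linear Gronwall for $\|\nabla_x K\|$.
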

\begin{proof}
By Proposition \ref{dKdecomposition}, we need to control the right hand side in the estimates for the terms $|\nab_xK_{SS}|$, $|\nab_xK_{ST}|$, $|\nab_xK_{TS}|$, $|\nab_xK_{TT}|$. By Proposition \ref{Kinftybd} and \eqref{fbd}, we have
$$|\nab_x K_{SS}|+|\nab_x K_{TS}|\ls 1.$$
The data term and the second term in the estimates for $\nab_xK_{ST}$ in Proposition \ref{dKdecomposition} are bounded using Proposition \ref{Kinftybd} and \eqref{fbd}.

The third term in the estimates for $\nab_xK_{ST}$ can be controlled by  
\begin{equation*}
\int_{C_{t,x}} \int_{\mathbb R^3} \frac{\pel_0^3\,\big((|\nabla_y K|+\rho)f\big)(s,y,\pel)}{(t-s)}d\pel\, d\sigma
\ls 1+\int_0^t  \|\nabla_x K(s)\|_{L^\infty_x} ds
\end{equation*}
using \eqref{fbd}.

We finally move to $\nab_xK_{TT}$. The data term is bounded as before. The second term can also be controlled using \eqref{fbd} but suffers a logarithmic loss in the integration in time. More precisely,
\begin{equation*}
\int_{C_{t,x}\cap \{0\leq s\leq t-\delta\}} \int_{\mathbb R^3} \frac{\pel_0^3\,f(s,y,\pel)}{(t-s)^3}d\pel\, d\sigma
\ls \log\left(\frac{t}{\delta}\right).
\end{equation*}
The third term in the upper bound of $\nab_xK_{TT}$ from Proposition \ref{dKdecomposition} can be estimated using \eqref{dfbd} by
\begin{equation*}
\begin{split}
\int_{C_{t,x}\cap\{t-\delta<s\leq t\}} \int_{\mathbb R^3} \frac{\pel_0\,|\nabla_{y,\pel} f|(s,y,\pel)}{(t-s)^2}d\pel\, dy\, ds
\ls &\delta \bakC(t).
\end{split}
\end{equation*}
The fourth term can then be controlled using \eqref{fbd} by
\begin{equation*}
\begin{split}
\int_{|y-x|=\delta}\int_{\mathbb R^3} \frac{\pZ^3\, f(s=t-\delta,y,\pel)}{\delta^2}d\pel\, dS
\ls &1,
\end{split}
\end{equation*}
since the set $\{|y-x|=\delta\}$ has area $\ls \delta^2$.  
$$
\delta=\frac{t}{1+\bakC(t)},
$$
we get 
$$|\nab_x K_{TT}(t,x)|\ls 1+\log\big(\bakC(t)\big).$$
Combining the above estimates for $\nab_x K_{SS}$, $\nab_x K_{TS}$, $\nab_x K_{ST}$ and $\nab_x K_{TT}$, we have
\begin{equation*}
|\nabla_x K(t,x)|
\ls 1+\log\big(\bakC(t)\big)+\int_0^t \|\nabla_x K(s)\|_{L^\infty_x}  ds.
\end{equation*}
By Gronwall's inequality, we therefore have
\begin{equation}\label{hr.1}
\|\nabla_x K(t)\|_{L^\infty_x} 
\ls 1+\log\big(\bakC(t)\big).
\end{equation}
We apply $\nabla_{x,\pel}$ to the ODE's \eqref{char1} and \eqref{char2} for the forward characteristics, starting at $(0, x,\pel)$, 
and then we integrate over the interval $[0,t]$ to obtain
\begin{multline}\notag
\left| ( \nabla_{x,\pel} X, \nabla_{x,\pel} V) \right|(t; 0,x,\pel)
\\
\ls 1+ \int_{0}^t ds \left( 1+ \|\nabla_x K(s)\|_{L^\infty_x} \right) \left| ( \nabla_{x,\pel} X, \nabla_{x,\pel} V) \right|(s; 0,x,\pel).
\end{multline}
By taking appropriate supremums, as in \eqref{fowC.def}, this further directly implies that 
\begin{equation}\label{hr.2}
\fowC(t) \ls 1+\int_0^t \fowC(s)(1+\|\nabla_x K(s)\|_{L^\infty_x} ) ds.
\end{equation}
Combining \eqref{hr.1} and \eqref{hr.2}, and using Lemma \ref{lemm.forw.back}, we have
$$
\fowC(t)\ls 1+\int_0^t \fowC(s)\left(1+\log \big( \fowC(s)\big) \right) ds,$$
which implies
\begin{equation}\label{G.unif.bd}
\fowC(t)\ls 1.
\end{equation}
Returning to \eqref{hr.1}, and again using Lemma \ref{lemm.forw.back}, we also have
$
\|\nabla_x K(t)\|_{L^\infty_x} \ls 1.
$
\end{proof}

As a consequence, by Proposition \ref{prelim.est} and \eqref{G.unif.bd}, we have the following estimates for the first derivatives of $f$:
\begin{proposition}\label{prop.dfbd}
Under the assumptions of Theorem \ref{main.theorem.1}, the following bounds hold:
\begin{equation*}
\begin{split}
\|\int_{\mathbb R^3} \left|(\nabla_{x,\pel} f)(t,x,\pel) \right| \pel_0^3 d\pel\|_{L^\infty([0,T_*); L^\infty_x)}\ls 1
\end{split}
\end{equation*}
and $$\|w_{3}\nab_{x,\pel} f\|_{L^\infty([0,T_*);L^\infty_x L^2_{\pel})}\ls 1.$$
\end{proposition}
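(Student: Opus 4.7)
The plan is to observe that Proposition \ref{prop.dfbd} is essentially a direct combination of three facts already established: the pointwise bounds on derivatives of $f$ in terms of the backward characteristic quantity $\bakC(t)$ from Proposition \ref{prelim.est}, the comparison between backward and forward characteristics in Lemma \ref{lemm.forw.back}, and the uniform bound $\fowC(t) \ls 1$ obtained inside the proof of Proposition \ref{prop.higher.reg} (see equation \eqref{G.unif.bd}).

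First I would invoke \eqref{G.unif.bd}, which gives $\fowC(t) \ls 1$ uniformly on $[0,T_*)$. Then, applying Lemma \ref{lemm.forw.back} in the $3$D case (where $i=2$), I obtain
\begin{equation*}
\bakC(t) \ls \fowC(t)^{5} \ls 1.
\end{equation*}
This uniform control of $\bakC(t)$ is the key input; once it is in hand, nothing further is needed beyond Proposition \ref{prelim.est}.

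For the first estimate, I would plug this into \eqref{dfbd} of Proposition \ref{prelim.est}, yielding
\begin{equation*}
\Bigl\| \int_{\mathbb R^3} |\nabla_{x,\pel} f|(t,x,\pel)\, \pel_0^3 \,d\pel \Bigr\|_{L^\infty_x} \ls \bakC(t) \ls 1,
\end{equation*}
which after taking the supremum in $t \in [0,T_*)$ gives the first claim. For the second estimate, I would use \eqref{dfbd.2}: since $w_3(\pel)^2 = \pel_0^3 \log^2(1+\pel_0)$, the square of the $L^\infty_x L^2_\pel$ norm with the weight $w_3$ is precisely the quantity bounded in \eqref{dfbd.2}, so
\begin{equation*}
\|w_{3}\nabla_{x,\pel} f\|_{L^\infty_x L^2_\pel}^2 \ls \bakC(t)^2 \ls 1.
\end{equation*}
Taking the supremum in $t$ completes the proof.

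There is no real obstacle here: all the hard work has already been done in Propositions \ref{prelim.est} and \ref{prop.higher.reg} (where the nonlinear Gronwall-type argument was closed using $\log\bakC(t) \ls \log\fowC(t)$ from Lemma \ref{lemm.forw.back}). The only minor bookkeeping point is to confirm that the uniform bound $\fowC(t) \ls 1$ derived \emph{inside} the proof of Proposition \ref{prop.higher.reg} is available to cite, but this is clear from \eqref{G.unif.bd}.
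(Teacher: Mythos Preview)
Your proof is correct and matches the paper's approach exactly: the paper states Proposition \ref{prop.dfbd} as an immediate consequence of Proposition \ref{prelim.est} and \eqref{G.unif.bd}, and you have correctly filled in the one missing link (Lemma \ref{lemm.forw.back}) needed to pass from $\fowC(t)\ls 1$ to $\bakC(t)\ls 1$.
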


We now move on to show the bounds on the second derivatives of $K$. These estimates are coupled with those for the second derivatives of the characteristics. Thus we again define the supremum along the forward characteristics as
\begin{equation}\label{fowC1.def} 
\fowC_1(t) \eqdef 1+\sup_{s\in [0,t],x,\pel\in \mathbb R^3} \left(\left|\nabla_{x,\pel}^2 X(s;0,x,\pel)\right|+\left|\nabla_{x,\pel}^2 V(s;0,x,\pel)\right|\right),
\end{equation}
and we define the analogous term along backward characteristics
\begin{equation}\label{bakC1.def} 
\bakC_1(t) \eqdef 1+\sup_{s\in [0,t],x,\pel\in \mathbb R^3} \left(\left|\nabla_{x,\pel}^2 X(0;s,x,\pel)\right|+\left|\nabla_{x,\pel}^2 V(0;s,x,\pel)\right|\right).
\end{equation}
We first show some preliminary bounds on the second derivatives of $f$ in terms of estimates for the second derivatives of the characteristics:

\begin{proposition}\label{df2bd}
Under the assumptions of Theorem \ref{main.theorem.1}, the following bound holds:
\begin{equation*}
\left\| \int_{\mathbb R^3} \left|(\nabla_{x,\pel}^2 f)(t, x,\pel) \right| \pel_0 d\pel \right\|_{L^\infty_x}
\ls \bakC_1(t).
\end{equation*}
\end{proposition}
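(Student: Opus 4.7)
The plan is to mirror the proof of Proposition \ref{prelim.est}, replacing the single chain rule differentiation by a second-order one. First, starting from the characteristic formula \eqref{along.char}, $f(t,x,\pel)=f_0(X_0,V_0)$ with $(X_0,V_0)=(X(0;t,x,\pel),V(0;t,x,\pel))$, I would differentiate twice to obtain the schematic pointwise bound
$$|\nabla_{x,\pel}^2 f|(t,x,\pel) \ls |\nabla_{x,\pel}^2 f_0|(X_0,V_0)\,|\nabla_{x,\pel}(X_0,V_0)|^2 + |\nabla_{x,\pel} f_0|(X_0,V_0)\,|\nabla_{x,\pel}^2(X_0,V_0)|.$$
By the definitions \eqref{bakC.def} and \eqref{bakC1.def}, the two characteristic factors are uniformly bounded by $\bakC(t)$ and $\bakC_1(t)$ respectively, so upon multiplying by $\pel_0$ and integrating in $\pel$ one gets
$$\int_{\mathbb R^3}|\nabla_{x,\pel}^2 f|\pel_0\,d\pel \ls \bakC(t)^2\int_{\mathbb R^3}|\nabla_{x,\pel}^2 f_0|(X_0,V_0)\,\pel_0\,d\pel + \bakC_1(t)\int_{\mathbb R^3}|\nabla_{x,\pel} f_0|(X_0,V_0)\,\pel_0\,d\pel.$$

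Next, as in the proof of Proposition \ref{prelim.est}, the hypothesis \eqref{assumption.mt1} combined with $|\vh|\le 1$ and the characteristic ODEs \eqref{char1}--\eqref{char2} yields a fixed radius $R>0$ such that $|X_0-x|+|V_0-\pel|\le R$ uniformly on $[0,T_*)$. Writing $y=X_0-x$ and $w=V_0-\pel$, the two momentum integrals above are then majorized, after taking $L^\infty_x$, by the initial data quantities in \eqref{ini.bd.5.5} and \eqref{ini.bd.4} (for the latter using the trivial inequality $\pel_0\le\pel_0^3$), each of which is bounded by a constant $C_R$. Finally, the bound $\fowC(t)\ls 1$ established in \eqref{G.unif.bd} during the proof of Proposition \ref{prop.higher.reg}, together with Lemma \ref{lemm.forw.back}, gives $\bakC(t)\ls 1$; plugging this in collapses the right-hand side to $\ls 1+\bakC_1(t)\ls\bakC_1(t)$, as required.

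I do not anticipate a genuine obstacle, since the argument is a direct second-order extension of Proposition \ref{prelim.est}. The one point worth flagging is that the weight $\pel_0$ appearing in the conclusion exactly matches the weight used in the hypothesis \eqref{ini.bd.5.5} on $|\nabla^2_{x,\pel} f_0|$, which is lighter than the $\pel_0^3$ weights used for $f_0$ and $\nabla_{x,\pel}f_0$ in \eqref{ini.bd.3} and \eqref{ini.bd.4}. This matching of weights is precisely what makes the estimate close; a heavier weight on the left would require a corresponding strengthening of \eqref{ini.bd.5.5}.
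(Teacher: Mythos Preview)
Your proposal is correct and follows essentially the same argument as the paper: differentiate \eqref{along.char} twice via the chain rule, bound the characteristic derivative factors by $\bakC(t)^2$ and $\bakC_1(t)$, invoke the uniform displacement bound to apply \eqref{ini.bd.5.5} and \eqref{ini.bd.4}, and then use $\bakC(t)\ls 1$ from \eqref{G.unif.bd} and Lemma~\ref{lemm.forw.back}. Your remark about the weight matching is a nice additional observation not made explicit in the paper.
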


\begin{proof}
Using the formula \eqref{along.char}, we have
\begin{equation*}
\begin{split}
&\int_{\mathbb R^3} \left|(\nabla_{x,\pel}^2 f)(t,x,\pel) \right| \pel_0 d\pel\\
\ls &\int_{\mathbb R^3} \left|(\nabla_{x,\pel}^2 f_0)(t,X(0;t,x,\pel),V(0;t,x,\pel)) \right| \bakC(t)^2\pel_0d\pel\\
&+\int_{\mathbb R^3}\left|(\nabla_{x,\pel} f_0)(t,X(0;t,x,\pel),V(0;t,x,\pel)) \right| \bakC_1(t) \pel_0 d\pel.
\end{split}
\end{equation*}
Recall from the proof of Proposition \ref{prelim.est} that we have, by the assumption \eqref{assumption.mt1}, that there exists $R$ such that $|X(0;t,x,\pel)-x|\leq \frac{R}{2}$ and $|V(0;t,x,\pel)-\pel|\leq \frac{R}{2}$. 
Further notice that the bound $\bakC(t) \ls 1$ follows from  \eqref{G.unif.bd} combined with Lemma \ref{lemm.forw.back}.
Therefore, using the assumptions \eqref{ini.bd.4} and \eqref{ini.bd.5.5}, we obtain the desired result.
\end{proof}

At the same time we have the bound

\begin{proposition}\label{G1.est}
Under the assumptions of Theorem \ref{main.theorem.1}, we have
$$
\fowC_1(t)\ls 1+\int_0^t \|\nab^2_x K\|_{L^\infty_x}(s) ds.
$$
\end{proposition}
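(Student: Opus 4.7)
The plan is to differentiate the characteristic ODEs \eqref{char1}, \eqref{char2} twice in $(x,\pel)$, expand via the chain rule, and apply Gronwall's inequality, using the bounds from Proposition \ref{prop.higher.reg} (that $\|\nabla_x K\|_{L^\infty_x} \ls 1$) and the already-established bound $\fowC(t)\ls 1$ from \eqref{G.unif.bd}.

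First, I would differentiate \eqref{char1} twice in $(x,\pel)$ to get schematically
\[
\frac{d}{ds}\nabla^2_{x,\pel} X = (\nabla^2_V \hat{V})(\nabla_{x,\pel} V)^2 + (\nabla_V \hat{V})(\nabla^2_{x,\pel} V).
\]
Since $\hat{V} = V/\sqrt{1+|V|^2}$ satisfies $|\nabla_V \hat{V}|, |\nabla^2_V \hat{V}| \ls 1$, and since $\fowC(t) \ls 1$ controls $|\nabla_{x,\pel} V|$, this immediately yields
\[
\Bigl|\frac{d}{ds}\nabla^2_{x,\pel} X\Bigr| \ls 1 + |\nabla^2_{x,\pel} V|.
\]
Next, I differentiate \eqref{char2} twice in $(x,\pel)$. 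Writing $\tilde{K}(s,X,V) = E(s,X) + \hat{V}\times B(s,X)$, the chain rule produces terms of the three types: (i) $\nabla^2 \tilde{K}$ evaluated at $(s,X,V)$ times $(\nabla_{x,\pel}(X,V))^2$; (ii) $\nabla \tilde{K}$ times $\nabla^2_{x,\pel}(X,V)$; and (iii) derivatives of $\hat{V}$ times $B$ times powers of $\nabla_{x,\pel} V$. The crucial observation is that $\nabla^2_V \tilde{K}$ only hits $\hat{V}$ (bounded) and is multiplied by $B$ which is controlled by $\|K\|_{L^\infty_x} \ls 1$ (Proposition \ref{Kinftybd}); the mixed $\nabla_V \nabla_x \tilde{K}$ term involves at most one $x$-derivative of $K$, controlled by $\|\nabla_x K\|_{L^\infty_x} \ls 1$ (Proposition \ref{prop.higher.reg}); and the term $\nabla_x \tilde{K}$ in type (ii) is likewise $\ls 1$. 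Only the pure $\nabla^2_x K$ remains unbounded.

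Combining these, and using $\fowC(s) \ls 1$ to absorb all factors of $|\nabla_{x,\pel}(X,V)|$, the equation for $\frac{d}{ds}\nabla^2_{x,\pel} V$ becomes
\[
\Bigl|\frac{d}{ds}\nabla^2_{x,\pel} V\Bigr|(s;0,x,\pel) \ls \|\nabla^2_x K(s)\|_{L^\infty_x} + 1 + |\nabla^2_{x,\pel}(X,V)|(s;0,x,\pel).
\]
Integrating from $0$ to $t$ (with vanishing data at $s=0$ since $X(0;0,x,\pel)=x$ and $V(0;0,x,\pel)=\pel$ have vanishing second derivatives in $(x,\pel)$ beyond the identity, which contributes nothing to $\nabla^2$) and taking the supremum over $(x,\pel)\in\mathbb{R}^3\times\mathbb{R}^3$ and $s\in[0,t]$ yields
\[
\fowC_1(t) \ls 1 + \int_0^t \|\nabla^2_x K(s)\|_{L^\infty_x}\,ds + \int_0^t \fowC_1(s)\,ds.
\]
A direct application of Gronwall's inequality on the bounded interval $[0,T_*)$ then produces the claimed estimate $\fowC_1(t) \ls 1+\int_0^t\|\nabla^2_x K(s)\|_{L^\infty_x}\,ds$. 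The only subtlety in the argument is the bookkeeping of the chain-rule terms to verify that no other second-derivative-of-$K$ contributions arise; this is the main obstacle, but it is routine once one observes that $V$-derivatives of $\tilde{K}$ only act on the bounded factor $\hat{V}$.
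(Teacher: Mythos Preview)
Your proposal is correct and follows essentially the same approach as the paper: differentiate the characteristic ODEs twice, use the already-established bounds $\|K\|_{L^\infty_x}\ls 1$, $\|\nabla_x K\|_{L^\infty_x}\ls 1$, and $\fowC(t)\ls 1$ to reduce to an inequality of the form $\fowC_1(t)\ls 1+\int_0^t(\|\nabla_x^2 K(s)\|_{L^\infty_x}+\fowC_1(s))\,ds$, and conclude by Gronwall. Your write-up simply makes explicit the chain-rule bookkeeping that the paper leaves implicit.
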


\begin{proof}
Differentiating the equations for the characteristics \eqref{char1} and \eqref{char2} starting at $(0, x, \pel)$ and using the bounds in Propositions \ref{prop.higher.reg} and \ref{Kinftybd} as well as \eqref{G.unif.bd}, we get
\begin{multline*}
\fowC_1(t)\ls 1+\int_0^t \left (\|\nab^2_x K\|_{L^\infty_x}(s)\fowC(s)^2+\|\nab_x K\|_{L^\infty_x}(s)\fowC_1(s)\right) ds\\
\ls 1+\int_0^t (\|\nab^2_x K\|_{L^\infty_x}(s)+\fowC_1(s)) ds.
\end{multline*}
The desired conclusion follows from an application of Gronwall's inequality.
\end{proof}
Moreover, the second derivatives of the backward characteristics can be bounded by the second derivatives of the forward characteristics:
\begin{lemma}\label{lemm.forw.back.1}
Given \eqref{G.unif.bd}, the following estimate holds:
$\bakC_1(t)\ls \fowC_1(t).$
\end{lemma}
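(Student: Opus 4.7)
The natural plan is to mimic the argument for Lemma \ref{lemm.forw.back} but push the differentiation one order further. Using the notation $y=(x,\pel)$, $\Phi_s(y) \eqdef Y(s;0,y)$ for the forward flow and $\Psi_s(y) \eqdef Y(0;s,y)$ for the backward flow, the identity \eqref{char.along.def} reads $\Psi_s \circ \Phi_s = \mathrm{id}$. One differentiation in $y$ recovers $(D\Psi_s)(\Phi_s(y)) \cdot (D\Phi_s)(y) = I$, which was the content of the earlier lemma. My plan is to differentiate this identity once more, and then solve algebraically for the second derivatives of $\Psi_s$.

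Concretely, differentiating $\Psi_s^k(\Phi_s(y))=y_k$ twice in $y$ and applying the chain rule yields, for each $i,j,k$,
\begin{equation*}
\sum_{l,m} (\partial_m \partial_l \Psi_s^k)(\Phi_s(y))\,(\partial_i \Phi_s^m)(y)\,(\partial_j \Phi_s^l)(y) = -\sum_l (\partial_l \Psi_s^k)(\Phi_s(y))\,(\partial_i\partial_j \Phi_s^l)(y).
\end{equation*}
Writing $A=(\partial_i \Phi_s^m)$, and noting from the proof of Lemma \ref{lemm.forw.back} that $\det A = 1$, Cramer's rule expresses $A^{-1}$ as polynomials of degree at most $d-1$ in the entries of $A$. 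Applying $A^{-T}$ on the left and $A^{-1}$ on the right to the displayed equation solves for the matrix $(\partial_m\partial_l \Psi_s^k)(\Phi_s(y))$ in terms of entries of $A^{-1}$ (each $\ls \bakC(t)$) and of $D^2\Phi_s$ (each $\ls \fowC_1(t)$).

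Taking absolute values and supremums over $s\in[0,t]$ and $y\in\mathbb R^3 \times \mathbb R^3$ then gives a pointwise estimate of the schematic form
\begin{equation*}
\bakC_1(t) \ls \bakC(t)^3 \,\fowC_1(t).
\end{equation*}
Finally, Lemma \ref{lemm.forw.back} together with the hypothesis \eqref{G.unif.bd} ($\fowC(t)\ls 1$) implies $\bakC(t) \ls \fowC(t)^5 \ls 1$, and the asserted bound $\bakC_1(t) \ls \fowC_1(t)$ follows.

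I do not foresee any serious obstacle: the whole argument is an exercise in the chain rule plus the key fact, already established, that $\det D\Phi_s \equiv 1$ so that $A^{-1}$ has entries polynomial in those of $A$. The only care needed is the bookkeeping of the $i,j,k,l,m$ indices when inverting the bilinear action of $A$ on the Hessian of $\Psi_s$, and tracking that the relevant bound for $A^{-1}$ (already given by Lemma \ref{lemm.forw.back}) is uniformly $O(1)$ thanks to \eqref{G.unif.bd}.
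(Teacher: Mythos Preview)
Your proposal is correct and is essentially the same argument as the paper's proof. The paper phrases the computation via the matrix identity $\partial_k(A^{-1})=-A^{-1}(\partial_k A)A^{-1}$ combined with the chain rule applied to $A^{-1}(s;y)=(\partial_j\Psi_s^i)(\Phi_s(y))$, while you differentiate the identity $\Psi_s\circ\Phi_s=\mathrm{id}$ twice directly; both routes lead to the same formula expressing $D^2\Psi_s$ in terms of three factors of $A^{-1}$ and one factor of $D^2\Phi_s$, and both conclude by invoking $\bakC(t)\ls 1$ from \eqref{G.unif.bd} and Lemma~\ref{lemm.forw.back}.
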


We only prove this lemma in the $3$D case, where it is used.  

\begin{proof}
We follow the notation from the proof of Lemma \ref{lemm.forw.back}.   Recall $A(s; y) = (a_{ij}(s; y))$ where $a_{ij}(s; y) = (\partial_jY_i)(s;0,y)$ and 
$A^{-1}(s; y)= (a^{ij}(0;s,Y(s;0,y)))$ from \eqref{A.inv.def}.

We write the well known formula for $\partial_k \left( a^{ij}(0;s,Y(s;0,y)) \right)$ as
$$
\partial_k \left( a^{ij}(0;s,Y(s;0,y)) \right)
=
- a^{il}(0;s,Y(s;0,y)) (\partial_k a_{lm}) (s;0, y)  a^{mj}(0;s,Y(s;0,y)).
$$
Recall that we use the convention of implicitly summing over repeated indices. We calculate the derivative of the inverse as
$$
\partial_k \left( a^{ij}(0;s,Y(s;0,y)) \right)
=
\left( \partial_l a^{ij}\right) (0;s,Y(s;0,y)) \partial_k Y^l (s;0,y).
$$
On the other hand $\partial_k Y^l (s;0,y) = a_{lk}(s; y)$ and $\left( \partial_l a^{ij}\right)$ is the term we want to estimate in the lower bound.  To ease the notation, in the rest of this proof we will suppress all the arguments of each function.  Thus we have
$$
\left( \partial_l a^{ij}\right) a_{lk}
=
- a^{il} (\partial_k a_{lm})  a^{mj}.
$$
Since $a_{lk}a^{kn}=\delta_{ln}$ we have
$$
\left( \partial_n a^{ij}\right) 
=
- a^{kn} a^{il} (\partial_k a_{lm})  a^{mj}.
$$
Thus, we obtain
$$
\sum_{n,i,j} \left| \partial_n a^{ij}\right| \ls 
1+ 
\sum_{k,l,m} |  \partial_k a_{lm} |,
$$
and the lemma follows.  We  just used the bounds for $\fowC(t)$ in \eqref{G.unif.bd} to control $a^{ij}$ as well as that
$\det A(s;x,\pel)=1$ 
from the proof of Lemma \ref{lemm.forw.back}.
\end{proof}

We can now show the boundedness of the second derivatives of $K$:

\begin{proposition}\label{prop.higher.reg.2}
Under the assumptions of Theorem \ref{main.theorem.1}, we have
$$\|\nabla_x^2 K\|_{L^\infty_t([0,T_*); L^\infty_x)}\ls 1.$$
\end{proposition}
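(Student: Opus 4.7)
The plan is to bound $|\nabla_x^2 K(t,x)|$ termwise using the representation in Proposition~\ref{d2Kdecomposition}, converting each cone integral to a $ds$ integral via the formula \eqref{cone.vol.form}, and then closing a Gronwall loop with the second--order characteristic bounds from Proposition~\ref{G1.est} and Lemma~\ref{lemm.forw.back.1}. All previously derived estimates, namely $\|K\|_{L^\infty}\ls 1$ (Proposition~\ref{Kinftybd}), $\|\nabla_x K\|_{L^\infty}\ls 1$ (Proposition~\ref{prop.higher.reg}), $\|\pel_0^3 f\|_{L^\infty_x L^1_\pel}\ls 1$ (Proposition~\ref{prelim.est}), $\|\pel_0^3|\nabla_{x,\pel}f|\|_{L^\infty_x L^1_\pel}\ls 1$ (Proposition~\ref{prop.dfbd}), and $\|\pel_0|\nabla^2_{x,\pel}f|\|_{L^\infty_x L^1_\pel}\ls \bakC_1(t)$ (Proposition~\ref{df2bd}), will be used as black boxes.

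Writing each cone integral as $\int_0^t \int_{\mathbb S^2}(\cdot)(t-s)^2 \sin\theta\,d\theta\,d\phi\,ds$ and taking the spatial supremum of the integrand for every fixed $s$, the four bulk terms in Proposition~\ref{d2Kdecomposition} are controlled as follows: the term with $(t-s)^{-1}\pel_0 |\nabla_x^2 K| f$ becomes $\int_0^t (t-s) \|\nabla_x^2 K(s)\|_{L^\infty_x} \|\pel_0 f\|_{L^\infty_x L^1_\pel}\,ds \ls \int_0^t \|\nabla_x^2 K(s)\|_{L^\infty_x}\,ds$; the term with $(t-s)^{-1}\pel_0 |\nabla_x K||\nabla_x f|$ is dominated by the finite constant $\int_0^t (t-s) \|\nabla_x K\|_{L^\infty_x}\|\pel_0 \nabla_x f\|_{L^\infty_x L^1_\pel}\,ds\ls 1$; the term with $(t-s)^{-1}\pel_0 |K||\nabla_x^2 f|$ is dominated by $\int_0^t (t-s)\|K\|_{L^\infty_x} \bakC_1(s)\,ds\ls \int_0^t \bakC_1(s)\,ds$; and the term with $(t-s)^{-2}\pel_0 |\nabla_x^2 f|$ becomes $\int_0^t \|\pel_0|\nabla_x^2 f|\|_{L^\infty_x L^1_\pel}(s)\,ds\ls \int_0^t \bakC_1(s)\,ds$. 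Crucially, no near-vertex splitting as in the $\nabla_x K_{TT}$ case is needed here, since the singularities $(t-s)^{-1}$ and $(t-s)^{-2}$ are both balanced by the $(t-s)^2$ factor in $d\sigma$ on the cone.

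Combining these bounds yields the integral inequality
\begin{equation*}
\|\nabla_x^2 K(t)\|_{L^\infty_x} \ls 1 + \int_0^t \|\nabla_x^2 K(s)\|_{L^\infty_x}\,ds + \int_0^t \bakC_1(s)\,ds.
\end{equation*}
By Lemma~\ref{lemm.forw.back.1} we may replace $\bakC_1$ by $\fowC_1$, and Proposition~\ref{G1.est} gives $\fowC_1(t)\ls 1 + \int_0^t \|\nabla_x^2 K(s)\|_{L^\infty_x}\,ds$. Setting $h(t) \eqdef \|\nabla_x^2 K(t)\|_{L^\infty_x} + \fowC_1(t)$ therefore yields $h(t)\ls 1+\int_0^t h(s)\,ds$, and Gronwall's inequality concludes $h(t)\ls 1$ on $[0,T_*)$, giving the claim.

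The only mild subtlety I anticipate is bookkeeping for the data contribution, which has already been shown in the footnote accompanying Proposition~\ref{d2Kdecomposition} to be controlled by the initial bounds \eqref{ini.bd.3}, \eqref{ini.bd.4} and \eqref{ini.bd.6} via Sobolev embedding. Beyond this, the argument is entirely parallel in structure to the proof of Proposition~\ref{prop.higher.reg}, with the simplification that no logarithmic loss arises because the kernel singularity in the $\nabla_x^2 K$ representation is $(t-s)^{-2}$ rather than $(t-s)^{-3}$, so no truncation parameter $\delta$ needs to be optimized.
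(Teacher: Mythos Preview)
Your proposal is correct and follows essentially the same approach as the paper. The only cosmetic difference is that the paper applies Gronwall's inequality twice in sequence (first to absorb the $\int_0^t\|\nabla_x^2 K(s)\|_{L^\infty_x}\,ds$ term, then to close on $\fowC_1$), whereas you add the two integral inequalities and apply Gronwall once to the sum $h(t)=\|\nabla_x^2 K(t)\|_{L^\infty_x}+\fowC_1(t)$; the two arguments are equivalent.
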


\begin{proof}
By Proposition \ref{d2Kdecomposition} and the estimates in Propositions \ref{Kinftybd}, \ref{prop.higher.reg} and \ref{prop.dfbd}, and using \eqref{fbd} we  have
$$
\|\nab^2_x K\|_{L^\infty_t([0,t); L^\infty_x)} \ls 1+ \int_0^t \big(\|\nab^2_x K (s)\|_{L^\infty_x}+\|\pel_0\nab_{x,\pel}^2 f(s)\|_{L^\infty_xL^1_p} \big)\,ds.
$$
By Gronwall's inequality, we thus have
$$
\|\nab^2_x K\|_{L^\infty_t([0,t); L^\infty_x)} \ls 1+ \int_0^t \|\pel_0\nab_{x,\pel}^2 f(s)\|_{L^\infty_xL^1_p} ds.
$$
Using Proposition \ref{df2bd}, we obtain
\bea\label{d2kbd}
\|\nab^2_x K\|_{L^\infty_t([0,t); L^\infty_x)} \ls 1+ \int_0^t \bakC_1(s) ds.
\eea
Combining this estimate with Proposition \ref{G1.est} and Lemma \ref{lemm.forw.back.1}, we get
$$\fowC_1(t)\ls 1+\int_0^t \fowC_1(s) ds,$$
which implies by Gronwall's inequality that
$$
\sup_{t\in [0,T_*)} \fowC_1(t)\ls 1.
$$
Returning to \eqref{d2kbd} and using Lemma \ref{lemm.forw.back.1} again, we obtain the desired conclusion.
\end{proof}

On the time interval $[0,T_*)$, we have now obtained the bounds 
$$\|K\|_{L^\infty([0,T_*);L^{\infty}_x)}+\|\nab_x K\|_{L^\infty([0,T_*);L^{\infty}_x)}+\|\nab_x^2 K\|_{L^\infty([0,T_*);L^{\infty}_x)}\ls 1,$$
using Propositions \ref{Kinftybd}, \ref{prop.higher.reg} and \ref{prop.higher.reg.2}.  Moreover, by Proposition \ref{prop.dfbd}, we have
$$\|w_{3}\nab_{x,\pel} f\|_{L^\infty([0,T_*);L^\infty_x L^2_{\pel})}\ls 1.$$
Thus recalling \eqref{A.t.def} we have $\mathcal A(t) \ls 1.$
By Theorem \ref{theorem.local.existence}, we have therefore concluded the proof of Theorem \ref{main.theorem.1}.

\section{Strichartz estimates}\label{stricharz.sec}
In these two sections, we state some standard linear estimates for the wave and Vlasov equations that we will use in Section \ref{sec.pf.con.cri} to prove Theorem \ref{main.theorem.2}. As mentioned above, we will need the Strichartz estimates for the linear wave equation. These estimates have been extensively studied (see \cite{Strichartz}, \cite{Kapitanski}, \cite{KeelTao}). We will need the following statement:

\begin{theorem}[Strichartz estimates]\label{Strichartz}
Let $u$ be a solution to the linear inhomogeneous wave equation in $\mathbb R^3$ with zero initial data:
$$\Box u= F,\quad u(0,x)=0,\quad \frac{\partial u}{\partial t}(0,x)=0.$$
Then, the following estimates hold
$$\|u \|_{L^{q_1}_t L^{r_1}_x}\ls \|F\|_{L^{q'_2}_t L^{r'_2}_x},$$
where 
$$
\frac 1{q_1}+\frac {3}{r_1}=\frac 1{q'_2}+\frac {3}{r'_2}-2, \quad 
\frac 1{q_1}\leq \frac 12-\frac 1{r_1},
\quad \frac 1{q'_2}\geq \frac 32-\frac 1{r'_2}.
$$
This works in the range
$2\leq q_1,q_2\leq \infty$ and  $2\leq r_1,r_2<\infty$, 
where $'$ denotes the usual H{\"o}lder conjugate exponent: $\frac{1}{r} + \frac{1}{r'}=1$.
Hence
$1\leq q_2'\leq 2$ and $1< r_2' \le 2$, 
\end{theorem}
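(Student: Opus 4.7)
The plan is to prove the estimate by the classical $TT^*$ method, combined with the dispersive decay of the half-wave propagator and the Hardy-Littlewood-Sobolev inequality in time, following the Ginibre-Velo and Lindblad-Sogge scheme. By Duhamel's formula and the spectral calculus, the solution is
\begin{equation*}
u(t,x)=\int_0^t \frac{\sin((t-s)\sqrt{-\Delta})}{\sqrt{-\Delta}}F(s,\cdot)(x)\,ds.
\end{equation*}
First I would discard the retardation constraint and prove the analogous bound for the untruncated operator $\widetilde u(t,x)\eqdef \int_{\mathbb R}\frac{\sin((t-s)\sqrt{-\Delta})}{\sqrt{-\Delta}}F(s,\cdot)(x)\,ds$ as a map $L^{q_2'}_tL^{r_2'}_x\to L^{q_1}_tL^{r_1}_x$, and then recover the retarded form via the Christ-Kiselev lemma. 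This is legitimate because the admissibility hypotheses together with $r_1,r_2<\infty$ force $q_2'<2<q_1$, so the strict time-exponent inequality Christ-Kiselev requires is automatic.

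Writing $2i\sin\tau=e^{i\tau}-e^{-i\tau}$, the bound for $\widetilde u$ reduces to estimating $|\nabla|^{-1}\int_{\mathbb R} e^{\pm i(t-s)\sqrt{-\Delta}}F(s)\,ds$. I would factor this operator as $T_1 T_2^*$ with $T_i g\eqdef e^{it\sqrt{-\Delta}}|\nabla|^{-s_i}g$ and $s_i\eqdef \tfrac{3}{2}-\tfrac{3}{r_i}-\tfrac{1}{q_i}$. The $TT^*$ principle then reduces the inhomogeneous inequality to two homogeneous Strichartz estimates
\begin{equation*}
\|e^{it\sqrt{-\Delta}}g\|_{L^{q_i}_tL^{r_i}_x}\ls \|g\|_{\dot H^{s_i}_x}, \qquad i=1,2,
\end{equation*}
provided the scaling identity $s_1+s_2=1$ holds; a direct computation confirms this is precisely equivalent to the hypothesis $\tfrac{1}{q_1}+\tfrac{3}{r_1}=\tfrac{1}{q'_2}+\tfrac{3}{r'_2}-2$, the extra $+1$ being the derivative absorbed by the $|\nabla|^{-1}$ factor.

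To prove each homogeneous estimate I would perform a Littlewood-Paley decomposition $g=\sum_{\lambda\in 2^{\mathbb Z}}P_\lambda g$ and use the standard stationary-phase dispersive bound
\begin{equation*}
\|e^{it\sqrt{-\Delta}}P_\lambda g\|_{L^\infty_x}\ls \lambda|t|^{-1}\|P_\lambda g\|_{L^1_x},
\end{equation*}
which, interpolated with the $L^2_x$ isometry $\|e^{it\sqrt{-\Delta}}P_\lambda g\|_{L^2_x}=\|P_\lambda g\|_{L^2_x}$, yields the frequency-localized dispersive decay
\begin{equation*}
\|e^{i(t-s)\sqrt{-\Delta}}P_\lambda h\|_{L^r_x}\ls \lambda^{2(1-2/r)}|t-s|^{-(1-2/r)}\|P_\lambda h\|_{L^{r'}_x}, \qquad 2\le r\le\infty.
\end{equation*}
Running $TT^*$ at frequency $\lambda$ and applying Hardy-Littlewood-Sobolev in $t$ on the admissibility line, then interpolating with Bernstein's inequality to reach any non-sharp $(q,r)$ inside the admissible region, produces $\|e^{it\sqrt{-\Delta}}P_\lambda g\|_{L^q_tL^r_x}\ls \lambda^{s}\|P_\lambda g\|_{L^2_x}$. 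Summing the dyadic pieces via the Littlewood-Paley square function inequality together with Minkowski's inequality (which is where $q,r\ge 2$ is used) closes the homogeneous estimate.

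The main obstacle in this type of argument is the sharp endpoint $q=2$, which in three dimensions requires the delicate Keel-Tao interpolation and in fact fails at $r=\infty$ for the wave equation. The assumption $r_1,r_2<\infty$ combined with the admissibility inequalities in the statement above forces the strict inequalities $\tfrac{1}{q_i}<\tfrac{1}{2}-\tfrac{1}{r_i}$, so one stays safely inside the open range of Hardy-Littlewood-Sobolev and the proof closes without invoking any endpoint machinery.
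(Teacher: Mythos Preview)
The paper does not supply its own proof of this theorem; it quotes the Strichartz estimates as a known result and refers the reader to the literature (Strichartz, Kapitanski, Keel--Tao). Your outline is the standard Ginibre--Velo/Lindblad--Sogge proof of these estimates and is essentially correct: the reduction to the untruncated operator via Christ--Kiselev, the factorization $T_1T_2^*$ with the scaling check $s_1+s_2=1$, and the frequency-localized dispersive estimate plus Hardy--Littlewood--Sobolev all go through as you describe.

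One small inaccuracy: your final paragraph claims that $r_i<\infty$ together with the admissibility condition forces the \emph{strict} inequality $\tfrac{1}{q_i}<\tfrac{1}{2}-\tfrac{1}{r_i}$. That is not true; for instance $(q_i,r_i)=(4,4)$ sits exactly on the sharp line $\tfrac{1}{q_i}=\tfrac{1}{2}-\tfrac{1}{r_i}$. What $r_i<\infty$ does force, via $\tfrac{1}{q_i}\le\tfrac{1}{2}-\tfrac{1}{r_i}<\tfrac{1}{2}$, is the strict inequality $q_i>2$. This is the relevant fact both for Christ--Kiselev (you get $q_2'<2<q_1$ as you asserted earlier) and for Hardy--Littlewood--Sobolev on the sharp line (the kernel $|t-s|^{-2/q_i}$ maps $L^{q_i'}_t\to L^{q_i}_t$ precisely when $2<q_i<\infty$). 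So your conclusion that no endpoint machinery is needed stands, but the justification should be that $q_i>2$, not that the pair lies strictly inside the admissible region.
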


\begin{remark}
Notice that the solution to the linear inhomogeneous wave equation is given explicitly by
$$u=\int_{C_{t,x}}\frac{F(s,y)}{t-s} d\sigma $$ 
in $3$ dimensions. Thus the above Strichartz estimates can be rephrased as
$$\|\int_{C_{t,x}}\frac{F(s,y)}{t-s} d\sigma  \|_{L^{q_1}_t L^{r_1}_x}\ls \|F\|_{L^{q'_2}_t L^{r'_2}_x} .$$
This will be the precise estimate that we will apply in the following sections.
\end{remark}

\section{Moment estimates}\label{sec.moment}

We recall the following standard interpolation inequalities:

\begin{proposition}[General interpolation inequality]\label{prop.interpolation.0}
For $1\leq q<\infty$ and $M\geq S>-3$, the following estimate holds in $\mathbb{R}^3_x \times \mathbb{R}^3_\pel$:
$$
\| \pel_0^S f(t) \|_{ L^q_x L^1_\pel}\ls \| \pel_0^M f(t) \|_{ L^{\frac{(S+3)}{M+3}q}_x L^1_\pel}^{\frac{S+3}{M+3}}.
$$
\end{proposition}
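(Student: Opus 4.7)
The strategy is to reduce the claim to a pointwise-in-$x$ interpolation inequality for the momentum moments, which then upgrades automatically to the stated $L^q_x L^1_\pel$ estimate since the right-hand side is a pure power of the $\pel$-integral. Specifically, I plan to establish first that, for a.e.\ $(t,x)$,
\begin{equation*}
\int_{\mathbb R^3} \pel_0^S f(t,x,\pel)\,d\pel \ls \left(\int_{\mathbb R^3} \pel_0^M f(t,x,\pel)\,d\pel\right)^{\frac{S+3}{M+3}},
\end{equation*}
where the implicit constant depends on $\|f\|_{L^\infty_{x,\pel}}$. This dependence is admissible under our $\ls$ convention since $\|f\|_{L^\infty_{x,\pel}}$ is one of the conserved quantities provided by Proposition \ref{cons.law.3} (taking $q\to\infty$) and is controlled by the initial data.

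To prove the pointwise inequality, I would split the $\pel$-integration at a radius $R\geq 1$ to be chosen. For the low-momentum region, the hypothesis $S>-3$ ensures integrability of $\pel_0^S$ near the origin, so
\begin{equation*}
\int_{\pel_0\leq R} \pel_0^S f\,d\pel \leq \|f\|_{L^\infty_{x,\pel}} \int_{\pel_0\leq R} \pel_0^S\,d\pel \ls R^{S+3}.
\end{equation*}
For the high-momentum region, the assumption $S\leq M$ together with $\pel_0>R$ gives $\pel_0^{S-M}\leq R^{S-M}$, hence
\begin{equation*}
\int_{\pel_0>R} \pel_0^S f\,d\pel \leq R^{S-M}\int_{\mathbb R^3}\pel_0^M f\,d\pel.
\end{equation*}
Adding these two contributions and choosing $R$ to balance them, namely $R = \bigl(\int \pel_0^M f\,d\pel\bigr)^{1/(M+3)}$, produces exactly the pointwise inequality above, at least when this choice satisfies $R\geq 1$. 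In the remaining case $\int\pel_0^M f\,d\pel\leq 1$ the inequality is trivial, since $(S+3)/(M+3)\leq 1$ makes the right-hand side no smaller than the quantity $\int \pel_0^M f\,d\pel$ that already dominates the left-hand side (using $\pel_0^S\leq \pel_0^M$).

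With the pointwise bound in hand, applying $\|\cdot\|_{L^q_x}$ to both sides and using the identity $\|g^\alpha\|_{L^q_x}=\|g\|_{L^{q\alpha}_x}^{\alpha}$ with $\alpha=(S+3)/(M+3)$ immediately yields the stated inequality. There is no real obstacle in this argument; the only substantive inputs are the $L^\infty_{x,\pel}$ conservation used in the low-momentum region and the integrability condition $S>-3$, both of which are built into the hypotheses.
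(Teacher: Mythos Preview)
Your argument is correct and is precisely the standard splitting-and-optimization proof one expects for this moment interpolation; the paper itself does not supply a proof here but defers to the companion paper \cite{LSSE}, where the same approach is used. There is nothing to add.
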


We will only use the special case $q=\frac{M+3}{S+3}$ in the following sections:

\begin{proposition}[Interpolation inequality]\label{prop.interpolation}
For $M\geq S>-3$ we have:
$$\| \pel_0^S f(t) \|_{ L^{\frac{M+3}{S+3}}_x L^1_\pel}\ls \| \pel_0^M f(t) \|_{ L^1_x L^1_\pel}^{\frac{S+3}{M+3}}.$$
\end{proposition}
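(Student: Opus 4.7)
The proof is a standard moment interpolation carried out pointwise in $x$, with the $L^{(M+3)/(S+3)}_x$ norm recovered at the end. Fix $t$ and $x$ and write $g(x)\eqdef \int_{\mathbb R^3}\pel_0^M f(t,x,\pel)\,d\pel$. The plan is to split the $\pel$-integral at some threshold $R=R(x)\ge 1$ as
$$
\int_{\mathbb R^3}\pel_0^S f\,d\pel=\int_{|\pel|\le R}\pel_0^S f\,d\pel+\int_{|\pel|>R}\pel_0^S f\,d\pel,
$$
control the high-momentum part by the weight bound $\pel_0^S\le R^{S-M}\pel_0^M$ valid when $M\ge S$ and $\pel_0\ge R$, and control the low-momentum part by volume using $S>-3$ and $\|f(t)\|_{L^\infty_{x,\pel}}\ls 1$ (from Proposition \ref{cons.law.3}).

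Concretely, the two pieces are bounded by
$$
\int_{|\pel|\le R}\pel_0^S f\,d\pel\ls \|f(t)\|_{L^\infty_{x,\pel}}\int_{|\pel|\le R}\pel_0^S\,d\pel\ls R^{S+3},
$$
and
$$
\int_{|\pel|>R}\pel_0^S f\,d\pel\le R^{S-M}\int_{|\pel|>R}\pel_0^M f\,d\pel\le R^{S-M}g(x).
$$
Balancing the two sides by choosing $R=\max(1,g(x)^{1/(M+3)})$ equalizes both contributions when $g(x)\ge 1$ and makes the second bound dominate when $g(x)<1$, giving in either case the pointwise estimate
$$
\int_{\mathbb R^3}\pel_0^S f(t,x,\pel)\,d\pel\ls g(x)^{\frac{S+3}{M+3}}.
$$
Raising to the power $(M+3)/(S+3)$ and integrating in $x$ yields
$$
\|\pel_0^S f(t)\|_{L^{(M+3)/(S+3)}_x L^1_\pel}^{(M+3)/(S+3)}\ls \int_{\mathbb R^3}g(x)\,dx=\|\pel_0^M f(t)\|_{L^1_xL^1_\pel},
$$
which is precisely the claimed inequality.

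There is no real obstacle here; the only mild subtlety is the case $S<0$, where the volume bound $\int_{|\pel|\le R}\pel_0^S\,d\pel\ls R^{S+3}$ requires $S>-3$ (exactly the hypothesis) and the threshold $R$ should not be taken below $1$. Note that the implicit constant in $\|f\|_{L^\infty_{x,\pel}}\ls 1$ is absorbed under the convention of Section \ref{sec.notation.3D}, since $\|f(t)\|_{L^\infty}=\|f_0\|_{L^\infty}$ by Proposition \ref{cons.law.3}. The same Chebyshev-style splitting and optimization, applied with $L^q_x$ in place of $L^{(M+3)/(S+3)}_x$ at the final step, gives the more general Proposition \ref{prop.interpolation.0}.
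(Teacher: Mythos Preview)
Your argument is correct and is the standard moment-splitting proof; the paper itself defers to the companion work \cite{LSSE} for this proposition, where the same technique is used. One small clarification is worth making: in the case $g(x)<1$ with $R=1$, the low-momentum piece still contributes $\ls 1$, which is not obviously $\ls g(x)^{(S+3)/(M+3)}$; the clean fix is to skip the splitting in that regime and use $\pel_0^S\le\pel_0^M$ (valid since $\pel_0\ge 1$ and $S\le M$) to obtain $\int_{\mathbb R^3}\pel_0^S f\,d\pel\le g(x)\le g(x)^{(S+3)/(M+3)}$ directly.
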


The proof of Propositions  \ref{prop.interpolation.0}-\ref{prop.interpolation} is given in our companion paper \cite[Section 4, Propositions 4.1-4.2]{LSSE}.  We will also use the following standard moment estimate.

\begin{proposition}[Moment estimate]\label{prop.moment}  We have the estimate
$$\| \pel_0^N f \|_{L^\infty_t([0,T); L^1_x L^1_\pel)}
\ls 
\| \pel_0^N f_0 \|_{L^1_x L^1_\pel}
+\| E\|_{L^1_t([0,T); L^{N+3}_x)}^{N+3}+\| B\|_{L^1_t([0,T); L^{N+3}_x)}^{N+3}.
$$
\end{proposition}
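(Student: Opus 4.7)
\medskip

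\textbf{Proof proposal for Proposition \ref{prop.moment}.} The plan is the standard energy/moment-identity argument: multiply the Vlasov equation \eqref{vlasov} by $\pel_0^N$, integrate in $(x,\pel)$, and control the resulting source term by H\"older's inequality together with the interpolation bound of Proposition \ref{prop.interpolation}. The single algebraic miracle we will exploit is that $\nabla_\pel \pel_0 = \vh$, together with $\vh \cdot (\vh \times B) = 0$, so that the magnetic field drops out of the moment identity and only the electric field survives on the right-hand side (hence the $B$ contribution in the statement is a free throw-in, since we can always bound $|E| \le |E|+|B|$).

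Concretely, using \eqref{vlasov} and integrating by parts in $x$ and in $\pel$ (the $x$-integration by parts kills the transport term since $\nabla_x \cdot \vh = 0$), one obtains
\begin{equation*}
\frac{d}{dt} \int_{\Rt} \int_{\Rt} \pel_0^N f \, d\pel \, dx
= N \int_{\Rt}\int_{\Rt} \pel_0^{N-1} (\vh \cdot E) \, f \, d\pel \, dx,
\end{equation*}
and in particular
\begin{equation*}
\frac{d}{dt} \|\pel_0^N f\|_{L^1_x L^1_\pel}
\ls \int_{\Rt} |E(t,x)| \Big( \int_{\Rt} \pel_0^{N-1} f \, d\pel \Big) dx.
\end{equation*}

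Next I would apply H\"older's inequality in $x$ with exponents $N+3$ and $\frac{N+3}{N+2}$ to get
\begin{equation*}
\frac{d}{dt} \|\pel_0^N f\|_{L^1_x L^1_\pel}
\ls \|E(t)\|_{L^{N+3}_x}\, \|\pel_0^{N-1} f(t)\|_{L^{(N+3)/(N+2)}_x L^1_\pel}.
\end{equation*}
Proposition \ref{prop.interpolation} with $S=N-1$ and $M=N$ (so that $\frac{M+3}{S+3}=\frac{N+3}{N+2}$) bounds the second factor by $\|\pel_0^N f(t)\|_{L^1_x L^1_\pel}^{(N+2)/(N+3)}$. Writing $M(t) \eqdef \|\pel_0^N f(t)\|_{L^1_x L^1_\pel}$, this yields the scalar differential inequality
\begin{equation*}
\frac{d}{dt} M(t) \ls \|E(t)\|_{L^{N+3}_x}\, M(t)^{(N+2)/(N+3)}.
\end{equation*}

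The final step is to integrate this ODE. Dividing by $M^{(N+2)/(N+3)}$ and observing that $\frac{d}{dt} M^{1/(N+3)} = \frac{1}{N+3} M^{-(N+2)/(N+3)} \frac{d}{dt} M$, we see that $M(t)^{1/(N+3)}$ grows at most at the rate $\|E(t)\|_{L^{N+3}_x}$. Integrating from $0$ to $t$ and raising to the power $N+3$ gives
\begin{equation*}
M(t) \ls M(0) + \|E\|_{L^1_t([0,T);L^{N+3}_x)}^{N+3},
\end{equation*}
and taking the supremum over $t \in [0,T)$ together with $|E|\le |E|+|B|$ (to symmetrize the right-hand side into the stated form) finishes the proof. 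There is no essential obstacle: the only mildly delicate points are the integration-by-parts boundary terms (justified by the $L^1_x L^1_\pel$-decay encoded in $\Dinit$ and in \eqref{ini.bd.2}) and keeping track of the exponent $\frac{N+2}{N+3}$ to set up Proposition \ref{prop.interpolation} correctly.
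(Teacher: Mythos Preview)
Your proof is correct and is precisely the standard moment-identity argument that the paper defers to its companion \cite[Section~4, Proposition~4.3]{LSSE}: multiply \eqref{vlasov} by $\pel_0^N$, integrate, use $\nabla_\pel \pel_0=\vh$ and $\vh\cdot(\vh\times B)=0$ to drop the magnetic contribution, then close with H\"older and Proposition~\ref{prop.interpolation} (with $S=N-1$, $M=N$) to obtain the Bernoulli-type ODE for $M(t)$. Your observation that the $B$-term in the stated bound is redundant is also correct; it is included only for symmetry and costs nothing.
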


Proposition \ref{prop.moment} is proved in our  paper \cite[Section 4, Proposition 4.3]{LSSE}.

\section{Proof of the second continuation criteria (Theorem \ref{main.theorem.2})}\label{sec.pf.con.cri}

In this section, we will prove Theorem \ref{main.theorem.2}; the continuation criteria. Thus all the estimates in this section will be obtained under the assumption that for some pair $(q,\theta)$ the following main quantity is bounded
\begin{equation}\label{main.assum.3D}
M_{\th,q}\eqdef \|f \pel_0^\th\|_{L^\infty_t([0,T_*);L^q_xL^1_\pel)}, 
\quad \th>\frac{2}{q} \quad \& \quad  2< q\leq \infty.
\end{equation}
We will first show that under the assumption $M_{\th,q}\ls 1$, all sufficiently high moments of $f$ are bounded. Then we will show that the integral of $E$ and $B$ along any characteristic is bounded. Theorem \ref{main.theorem.2} will then follow from Theorem \ref{main.theorem.1}.

\subsection{Propagation of moments}
We will first prove the following interpolation-type inequality. This is a consequence of Proposition \ref{prop.interpolation.0}.

\begin{proposition}\label{prop.interpolation.2}
Suppose we have positive real numbers $\eta$, $\rho$ and $\sigma$ satisfying
$$
0<q\eta<1,
$$
and
\begin{equation}
\label{sigma.bigger}
\sigma\geq \frac{\rho-\eta(N+3-3q)}{1-q\eta}.
\end{equation}
Then we have
$$
\|f \pel_0^{\rho}\|_{L^\infty_t([0,T_*);L^q_x L^1_\pel)}\ls  M_{\sigma,q}^{1-q\eta}  \|f \pel_0^N\|_{L^\infty_t([0,T_*);L^1_x L^1_\pel)}^{\eta},
$$
where $M_{\sigma,q}$ and the $q$ exponent are from \eqref{main.assum.3D}.
\end{proposition}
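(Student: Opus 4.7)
The plan is to view Proposition~\ref{prop.interpolation.2} as a two-step H\"older interpolation (first in $\pel$, then in $x$), capped off by a single application of the moment estimate in Proposition~\ref{prop.interpolation}. The essence of the proof is to identify the intermediate momentum weight that makes the exponents on both sides match.

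I would first introduce the equality case $\sigma_0 \eqdef \frac{\rho - \eta(N+3-3q)}{1-q\eta}$ of \eqref{sigma.bigger} together with the auxiliary weight $S \eqdef \frac{N+3-3q}{q}$. These are chosen precisely so that $(1-q\eta)\sigma_0 + q\eta\, S = \rho$ and $\frac{N+3}{S+3}=q$. Writing $f\pel_0^\rho = (f\pel_0^{\sigma_0})^{1-q\eta}(f\pel_0^S)^{q\eta}$ and applying H\"older in $\pel$ with exponents $\frac{1}{1-q\eta}$ and $\frac{1}{q\eta}$ (admissible since $0<q\eta<1$) yields, pointwise in $(t,x)$,
$$\int_{\mathbb R^3} f \pel_0^\rho \, d\pel \leq \left(\int_{\mathbb R^3} f \pel_0^{\sigma_0} \, d\pel\right)^{1-q\eta}\left(\int_{\mathbb R^3} f \pel_0^{S} \, d\pel\right)^{q\eta}.$$
Next I would take the $L^q_x$ norm of this inequality and apply H\"older in $x$ with exponents $\frac{q}{1-q\eta}$ and $\frac{1}{\eta}$, whose reciprocals sum to $\frac{1}{q}$; this distributes the $L^q_x$ norm onto each factor separately, giving
$$\left\|\int f \pel_0^\rho \, d\pel\right\|_{L^q_x} \leq \left\|\int f\pel_0^{\sigma_0} \, d\pel\right\|_{L^q_x}^{1-q\eta}\left\|\int f\pel_0^{S} \, d\pel\right\|_{L^q_x}^{q\eta}.$$

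The second factor is then controlled by Proposition~\ref{prop.interpolation} (applied with $M=N$ and the above $S$), which by design yields $\|\int f\pel_0^S \, d\pel\|_{L^q_x}^{q\eta} \ls \|f\pel_0^N\|_{L^1_x L^1_\pel}^{\eta}$. For the first factor, I would use the elementary monotonicity $M_{\sigma_0, q} \leq M_{\sigma, q}$, valid because $\sigma \geq \sigma_0$ and $\pel_0 \geq 1$, to upgrade it to $M_{\sigma,q}^{1-q\eta}$. Taking $\sup_{t\in[0,T_*)}$ then concludes the proof. There is no serious obstacle here: the only real content is the algebraic identification of $\sigma_0$ and $S$ so that the two H\"older steps compose cleanly with Proposition~\ref{prop.interpolation}, and the admissibility checks on all exponents (including $S>-3$ when $N>-3$) are immediate from the hypotheses $0<q\eta<1$ and $2<q\leq\infty$.
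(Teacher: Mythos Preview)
Your proposal is correct and follows essentially the same route as the paper: split $f\pel_0^\rho=(f\pel_0^{\sigma_0})^{1-q\eta}(f\pel_0^S)^{q\eta}$, apply H\"older first in $\pel$ and then in $x$ with the same effective exponents, and finish with Proposition~\ref{prop.interpolation} applied at $S=\frac{N+3-3q}{q}$, $M=N$. The only cosmetic differences are that you phrase the $x$-H\"older step via the generalized form (exponents $\frac{q}{1-q\eta}$ and $\frac{1}{\eta}$ with reciprocals summing to $\frac{1}{q}$) rather than first raising to the $q$-th power, and you make the monotonicity step $M_{\sigma_0,q}\le M_{\sigma,q}$ explicit; both are equivalent to what the paper does.
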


\begin{proof}
Fix $t\in[0,T_*)$. We first apply H{\"o}lder's inequality in the $\pel$ integration with the conjugate H{\"o}lder exponents $\frac{1}{q\eta}$ and $\frac{1}{1-q\eta}$ to get
\begin{multline}\notag
\|f \pel_0^{\rho}\|_{L^q_x L^1_\pel}
\ls 
\|f^{1-q\eta} \pel_0^{\rho-\eta(N+3-3q)} f^{q\eta}\pel_0^{\eta(N+3-3q)}\|_{L^q_x L^1_\pel}
\\
\ls 
\|\big(\|f \pel_0^{\frac{\rho-\eta(N+3-3q)}{1-q\eta}}\|_{L^1_\pel}^{1-q\eta}\|f \pel_0^{\frac{N+3-3q}{q}}\|_{ L^1_\pel}^{q\eta}\big)\|_{L^q_x}.
\end{multline}
We then apply H{\"o}lder's inequality in $x$ with the same conjugate exponents 
to obtain
\begin{equation*}
\|f \pel_0^{\rho}\|_{L^q_x L^1_\pel}
\ls \|f \pel_0^{\frac{\rho-\eta(N+3-3q)}{1-q\eta}}\|_{L^q_x L^1_\pel}^{1-q\eta}\|f \pel_0^{\frac{N+3-3q}{q}}\|_{L^q_x L^1_\pel}^{q\eta}.
\end{equation*}
Finally, we apply the interpolation inequality in Proposition \ref{prop.interpolation} to achieve
$$\|f \pel_0^{\rho}\|_{L^q_x L^1_\pel}
\ls \|f \pel_0^{\frac{\rho-\eta(N+3-3q)}{1-q\eta}}\|_{L^q_x L^1_\pel}^{1-q\eta}\|f \pel_0^N\|_{L^1_x L^1_\pel}^{\eta}.$$
We have just used $S=(N+3-3q)/q$, $M=N$ and $\frac{M+3}{S+3} = q$ as would be needed to apply Proposition \ref{prop.interpolation}.
We now take the supremum over all $t\in[0,T_*)$. By the assumption \eqref{sigma.bigger} the desired conclusion follows.
\end{proof}

Next we control the $K_T$ term:  

\begin{proposition}\label{KT.3.prop}
Suppose that $N$ is sufficiently large depending upon $\th$ and $q$ from \eqref{main.assum.3D}.  Then  we have the following estimate
$$
\| K_T\|_{L^1_t([0,T_*);L^{N+3}_x)}^{N+3} \ls M_{\th,q}^\beta \| f\pel_0^N\|_{L^\infty_t([0,T_*); L^1_x L^1_\pel)}^{\alpha},
$$
for some explicitly computable $\alpha \in (0,1)$ and $\beta>0$, where $M_{\th,q}$ is defined in \eqref{main.assum.3D}.  
\end{proposition}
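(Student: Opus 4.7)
I plan to combine the pointwise kernel estimate of Proposition~\ref{3D.kt.ep.prop} with the Strichartz estimate of Theorem~\ref{Strichartz} and the moment interpolation of Proposition~\ref{prop.interpolation.2}. The strategy is to apply Strichartz to the wave representation of $|K_T|^{2+\ep}$, convert the resulting source term into an $L^q_x$ moment of $f$, and finally interpolate that moment between $M_{\th,q}$ and a high moment $\|f\pel_0^N\|_{L^1_xL^1_\pel}$.

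First, I apply Proposition~\ref{3D.kt.ep.prop} with parameters $\gamma\in[0,2)$ and $\ep>0$ (to be chosen later depending on $\th$ and $q$), which gives the pointwise bound
$$|K_T(t,x)|^{2+\ep}\ls \int_{C_{t,x}}\frac{F(s,y)}{t-s}\,d\sigma,\qquad F=\Bigl(\int_{\Rt} f\pel_0^{m}\,d\pel\Bigr)^{a},$$
with $m=\frac{2+\gamma}{2-\gamma}+\ep$ and $a=(1-\tfrac{\gamma}{2})(2+\ep)$. The right-hand side is precisely $\Box^{-1}F$ in three dimensions, so Theorem~\ref{Strichartz} is available.

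Second, I invoke Theorem~\ref{Strichartz} with exponents chosen so that $(2+\ep)r_1=N+3$ and $a r_2'=q$; the scaling relation $\frac{1}{q_1}+\frac{3}{r_1}=\frac{1}{q_2'}+\frac{3}{r_2'}-2$ together with the admissibility conditions then determines $q_1,q_2'$. For $N$ large, $r_1\geq 2$ is automatic, and the parameter $\gamma$ can be tuned so that $r_2'\in(1,2]$ matches the Strichartz window. After a trivial H\"older bound in time (permissible since $T_*<\infty$) passing from $L^{(2+\ep)q_1}_t$ down to $L^1_t$ on the left and from $L^{aq_2'}_t$ up to $L^\infty_t$ on the right, one arrives at
$$\|K_T\|_{L^1_t([0,T_*);L^{N+3}_x)}^{2+\ep}\ls \Bigl\|\int_{\Rt} f\pel_0^{m}\,d\pel\Bigr\|_{L^\infty_t([0,T_*);L^q_x)}^{a}.$$

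Third, I apply Proposition~\ref{prop.interpolation.2} with $\rho=m$, $\sigma=\th$, and an $\eta$ chosen just above the threshold $\frac{m-\th}{N+3-3q+q\th}$ (positive for $N$ large, using $\th>0$), obtaining
$$\Bigl\|\int_{\Rt} f\pel_0^{m}\,d\pel\Bigr\|_{L^\infty_t L^q_x}\ls M_{\th,q}^{1-q\eta}\,\|f\pel_0^N\|_{L^\infty_t L^1_xL^1_\pel}^{\eta}.$$
Raising to the $\frac{N+3}{2+\ep}$ power yields the claimed bound with $\beta=\tfrac{(N+3)(1-q\eta)a}{2+\ep}$ and $\alpha=\tfrac{(N+3)\eta a}{2+\ep}=\eta(N+3)(1-\tfrac{\gamma}{2})$.

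The main obstacle is to certify $\alpha\in(0,1)$. For this, one must choose $\eta$ in the non-empty window between the interpolation lower bound $\frac{m-\th}{N+3-3q+q\th}$ and the subcriticality upper bound $\frac{1}{(N+3)(1-\gamma/2)}$. As $N\to\infty$, the window exists iff $(m-\th)(1-\tfrac{\gamma}{2})<1$, i.e.,
$$\th\bigl(1-\tfrac{\gamma}{2}\bigr)>\tfrac{\gamma}{2}+\ep\bigl(1-\tfrac{\gamma}{2}\bigr).$$
Here the hypothesis $\th>\tfrac{2}{q}$ is used critically: one tunes $\gamma\in[0,2)$ to the Strichartz admissibility condition $r_2'=q/a\in(1,2]$ (which forces $1-\tfrac{\gamma}{2}$ to scale like $\tfrac{1}{q}$ when $q$ is large), and then taking $\ep$ small enough, the inequality reduces precisely to $\th q>2$. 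For large $q$ one further uses an $x$-interpolation between $L^q_x$ (controlled by $M_{\th,q}$) and $L^1_x$ (controlled via log-convex interpolation between conserved $L^1$ mass and $\|f\pel_0^N\|_{L^1_xL^1_\pel}$ through Proposition~\ref{prop.interpolation}) to bridge any mismatch between $ar_2'$ and $q$; this only affects $\alpha$ by a factor tending to $1$ as $N\to\infty$, so the subcriticality is preserved. The delicate balancing of all three constraints (Strichartz admissibility, $q\eta<1$, and $\alpha<1$) simultaneously across the full range $2<q\leq\infty$ is the principal difficulty.
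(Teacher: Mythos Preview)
Your overall architecture matches the paper's proof exactly: Proposition~\ref{3D.kt.ep.prop} followed by Strichartz (Theorem~\ref{Strichartz}) followed by the moment interpolation of Proposition~\ref{prop.interpolation.2}, with a final $L^1_x$--$L^q_x$ interpolation to absorb the discrepancy between the Strichartz exponent and the hypothesis exponent~$q$. The paper fixes $q_2'=1$, $r_2'=\tfrac{6(N+3)}{3N+17}\approx 2$ uniformly in $q$, lands on $\tilde q=ar_2'\approx 2(2-\gamma)$, and chooses $\gamma\in[0,1)$ so that simultaneously $\tilde q\le q$ and $\sigma\approx\tfrac{\gamma}{2-\gamma}<\th$; the $L^1_x$--$L^q_x$ interpolation then handles the gap between $\tilde q$ and $q$.

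There is one genuine slip in your execution. Setting $r_2'=q/a$ cannot be made admissible for $q$ large: $r_2'\le 2$ forces $(1-\gamma/2)(2+\ep)\ge q/2$, which is impossible once $q>2(2+\ep)\approx 4$ since $1-\gamma/2\le 1$. Your parenthetical that ``$1-\tfrac{\gamma}{2}$ scales like $\tfrac{1}{q}$'' is backwards; the constraint would force $1-\gamma/2$ to scale like $q$, which is the obstruction. So for $q>4$ the direct step~2 fails, and one must do what the paper does: keep $r_2'$ pinned near $2$ (so $\tilde q=ar_2'\le 4$), then interpolate $L^{\tilde q}_x$ between $L^1_x$ (conserved mass or energy) and $L^q_x$. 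You gesture at exactly this in your last paragraph, but the presentation obscures that this is the main line of the argument for $q>4$, not a patch. Two smaller points: the interpolation lower bound for $\eta$ should have $N+3-3q-q\th$ in the denominator (sign error on $q\th$), and the large-$N$ limiting condition you derive, $(m-\th)(1-\gamma/2)<1$, reduces to $q(\th+1)>4$ under $r_2'=2$, not to $\th q>2$; the former is still implied by $\th>2/q$ with $q>2$, so your conclusion survives, but the ``precisely'' is not accurate.
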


\begin{proof}
We will use the Strichartz estimates (Theorem \ref{Strichartz}) from Section \ref{stricharz.sec}, 
for any small $\ep>0$, with 
$$
q_1=\frac{2(N+3)}{N-1-6\ep}, 
\quad 
r_1=\frac{N+3}{2+\ep},
\quad 
q'_2=1, 
\quad
r'_2=\frac{6(N+3)}{3N+17}.
$$
Then for any $\gamma \in [0,2)$ from Proposition \ref{3D.kt.ep.prop} we have
\beaa
\| |K_T|^{2+\ep} \|_{L^{\frac{2(N+3)}{N-1-6\ep}}_t([0,T_*);L^{\frac{N+3}{2+\ep}}_x)}^{N+3}\ls  \|\big(\int_{\Rt}f(s,x,\pel)\pel_0^{\frac{2+\gamma}{2-\gamma}+\ep}d\pel\big)^{\frac{(2-\gamma)(2+\ep)}{2}} \|_{L^1_t([0,T_*);L_x^{\frac{6(N+3)}{3N+17}})}^{N+3}.
\eeaa
This last inequality implies that
\begin{equation*}
\| K_T \|_{L^{\frac{2(N+3)(2+\ep)}{N-1-6\ep}}_t([0,T_*);L^{N+3}_x)}^{N+3
}\ls  
\|f\pel_0^{\frac{2+\gamma}{2-\gamma}+\ep} \|_{L^\infty_t([0,T_*);L_x^{\frac{3(N+3)(2-\gamma)(2+\ep)}{3N+17}}L^1_\pel)}^{\frac{(N+3)(2-\gamma)}{2}}.
\end{equation*}
By H\"older's inequality in $t$, this implies
\begin{equation}\label{KT.3.1}
\| K_T \|_{L^1_t([0,T_*);L^{N+3}_x)}^{N+3
}\ls  
\|f\pel_0^{\frac{2+\gamma}{2-\gamma}+\ep} \|_{L^\infty_t([0,T_*);L_x^{\frac{3(N+3)(2-\gamma)(2+\ep)}{3N+17}}L^1_\pel)}^{\frac{(N+3)(2-\gamma)}{2}}.
\end{equation}
Let $\delta>0$ be a small constant to be chosen later. To control the right hand side we apply Proposition \ref{prop.interpolation.2} with 
\begin{equation}\label{tq.def}
\rho=\frac{2+\gamma}{2-\gamma}+\ep,
\quad 
\eta=\frac{2(1-\delta)}{(N+3)(2-\gamma)},
\quad \& \quad
{\tilde{q}}=\frac{3(N+3)(2-\gamma)(2+\ep)}{3N+17}.
\end{equation}
With these exponents, we use Proposition \ref{prop.interpolation.2} to obtain
\begin{equation}\label{this.est.prop}
\|f\pel_0^{\rho} \|_{L^\infty_t([0,T_*);L_x^{{\tilde{q}}}L^1_\pel)}
\ls
\| f \pel_0^\sigma\|_{L^\infty_t([0,T_*);L_x^{{\tilde{q}}}L^1_\pel)}^{1-{\tilde{q}}\eta}
\| f \pel_0^N\|_{L^\infty_t([0,T_*);L_x^{1}L^1_\pel)}^{\eta}
\end{equation}
provided that we have
\begin{multline}\label{sigma.cond}
\sigma \ge \frac{\rho-\eta(N+3-3{\tilde{q}})}{1-{\tilde{q}}\eta}
\\
=
\frac{\frac{2+\gamma}{2-\gamma}+\ep-\frac{2(1-\delta)}{(N+3)(2-\gamma)}(N+3-\frac{3(N+3)(2-\gamma)(2+\ep)}{3N+17})}{1-\frac{6(2+\ep)(1-\delta)}{3N+17}}
\\
=
\frac{\gamma}{2-\gamma}+\ep+O(\delta)+O(1/N)
\xrightarrow{N\to\infty} \frac{\gamma}{2-\gamma}+\ep+O(\delta).
\end{multline}
We observe that moreover
$$
{\tilde{q}} = 2(2-\gamma) + O(\ep) + O(1/N).
$$
Now given any fixed pair $(q,\th)$ from \eqref{main.assum.3D}, we can choose $N$ to be sufficiently large and $\ep$, $\delta$ to be sufficiently small such that there exists $\gamma\in [0,1)$ so that both ${\tilde{q}} \le q$ and $\sigma \le \theta$, where $\tilde{q}$ is given by \eqref{tq.def} and $\sigma$ obeys \eqref{sigma.cond}. In particular we choose $\gamma \in [0,1)$ such that $2(2-\gamma) < q$ and $\frac{\gamma}{2-\gamma} < \frac{2}{q}$.  Then, moreover, $\sigma$ can be chosen to satisfy $0\leq \sigma\leq 1$. Notice that by the conservation laws in Propositions \ref{cons.law.1} and \ref{cons.law.3}, we have via interpolation that
\begin{equation}\label{KT.3.2}
\begin{split}
\| f \pel_0^\sigma\|_{L^\infty_t([0,T_*);L_x^{{\tilde{q}}}L^1_\pel)}
\ls &
\| f \pel_0^\sigma\|_{L^\infty_t([0,T_*);L_x^{q}L^1_\pel)}^{(1-\frac{1}{\tilde{q}})\frac{q}{q-1}}\| f \pel_0^\sigma\|_{L^\infty_t([0,T_*);L_x^{1}L^1_\pel)}^{(\frac{1}{\tilde{q}}-\frac{1}{q})\frac{q}{q-1}}\\
\ls &
\| f \pel_0^\th\|_{L^\infty_t([0,T_*);L_x^{q}L^1_\pel)}^{(1-\frac{1}{\tilde{q}})\frac{q}{q-1}}.
\end{split}
\end{equation}
Therefore this proposition is implied by the equations \eqref{KT.3.1}, \eqref{this.est.prop} and \eqref{KT.3.2}.
\end{proof}

For $K_S$, we have the following bound:

\begin{proposition}\label{KS.3.prop}
Let $N$ be sufficiently large depending on $\th$ and $q$ from \eqref{main.assum.3D}.  Then we have following the estimate
$$
\| K_S\|_{L^1_t([0,T_*);L^{N+3}_x)}^{N+3} 
\ls 
1+M_{\th,q}^{\beta'}\| f\pel_0^N\|_{L^\infty_t ([0,T_*);L^1_x L^1_\pel)}^{\alpha'},$$
for some explicitly computable $\alpha'\in(0,1)$ and $\beta'>0$.
\end{proposition}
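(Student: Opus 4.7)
The plan is to mirror the strategy of Proposition \ref{KT.3.prop} while contending with the extra factor of $|K|$ present in the pointwise bound $|K_S|\ls \int_{C_{t,x}} (t-s)^{-1}|K|(s,y)\int_{\Rt}\pel_0 f\,d\pel\,d\sigma$ from Proposition \ref{kS.trivial.bd}. Setting $h(s,y)\eqdef \int_{\Rt}\pel_0 f\,d\pel$, I would apply the Strichartz estimate (Theorem \ref{Strichartz}) with the same Strichartz-endpoint exponents $r_1=N+3$, $q_1=\tfrac{2(N+3)}{N+1}$, $q_2'=1$, $r_2'=\tfrac{6(N+3)}{3N+13}$ used for $K_T$, giving
\[
\|K_S\|_{L^{q_1}_t L^{N+3}_x}\ls \||K|h\|_{L^1_t L^{r_2'}_x}.
\]
Since $r_2'<2$ for large $N$, there is room to apply H\"older in $x$ and in $t$ to decompose the right-hand side as a product of norms of $K$ and $h$ separately.

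The key step is to choose the H\"older split so that $h$ sits in $L^{q/(q-1)}_t L^{q}_x$ matching the assumption \eqref{main.assum.3D}, which forces $K$ into $L^{b_1}_t L^{a_1}_x$ with $b_1\to q$ and $a_1\to \tfrac{2q}{q-2}$ as $N\to\infty$. These are precisely the indices along which $\|K\|$ can be interpolated between the conserved norm $\|K\|_{L^\infty_t L^2_x}\ls 1$ (Proposition \ref{cons.law.1}) and the bootstrapped norm $\|K\|_{L^{q_1}_t L^{N+3}_x}$, with interpolation parameter $\lambda\to (q-2)/q$. Decomposing $K=K_S+K_T+(K)_0$ and using Proposition \ref{KT.3.prop} (more precisely, the intermediate bound on $\|K_T\|_{L^{q_T}_t L^{N+3}_x}^{N+3}$ appearing in its proof, together with the embedding $L^{q_T}_t\hookrightarrow L^{q_1}_t$ valid for $q_1\le q_T$ on the bounded interval $[0,T_*)$) to control the $K_T$ contribution, I arrive at a sublinear inequality of the form $X\ls (X+A)^{1-\lambda} Y$, where $X\eqdef\|K_S\|_{L^{q_1}_t L^{N+3}_x}$, $A$ is controlled by previous results, and $Y\eqdef\|h\|_{L^\infty_t L^q_x}$. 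Young's inequality absorbs the $X^{1-\lambda}$ term into the left-hand side and yields $X\ls Y^{1/\lambda}+A^{1-\lambda}Y$.

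To close, I would invoke Proposition \ref{prop.interpolation.2} with $\rho=1$, $\sigma=\theta$, and some small $\eta>0$ to obtain $Y\ls M_{\theta,q}^{1-q\eta}\|f\pel_0^N\|^\eta$, while Proposition \ref{KT.3.prop} gives $A^{N+3}\ls 1+M_{\theta,q}^{\beta}\|f\pel_0^N\|^\alpha$ with $\alpha<1$. Raising $X\ls Y^{1/\lambda}+A^{1-\lambda}Y$ to the $(N+3)$-th power and converting $L^{q_1}_t$ to $L^1_t$ via H\"older on the bounded time interval then delivers the claimed bound. The main obstacle is the bookkeeping needed to ensure that the final exponent on $\|f\pel_0^N\|$ is strictly less than $1$: the two terms contribute exponents $\eta(N+3)/\lambda$ and $\alpha(1-\lambda)+\eta(N+3)$ respectively, both requiring $\eta(N+3)<\lambda$; on the other hand the Proposition \ref{prop.interpolation.2} hypothesis with $\sigma=\theta$ forces $\eta\ge \eta_*\sim(1-\theta)/N$ when $\theta<1$. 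Comparing $\eta_*$ with $\lambda/(N+3)$ as $N\to\infty$ reduces exactly to the sharp threshold $\theta>2/q$, which is our hypothesis. The endpoint $q=\infty$ (where $\lambda\to 1$ degenerates the interpolation) is handled by using conservation directly in place of interpolation, closely paralleling the $q=\infty$ sketch given in the introduction.
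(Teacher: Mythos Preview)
Your approach is correct and is essentially the paper's proof: same Strichartz exponents, same H\"older split of $|K|h$ with interpolation of $K$ against the conserved $L^\infty_tL^2_x$ norm, same appeal to Proposition~\ref{prop.interpolation.2} for $h=\int\pel_0 f\,d\pel$, and closure by absorbing the sublinear $K_S$ contribution via Young's inequality. The only cosmetic difference is that the paper introduces an auxiliary parameter $\gamma\in(0,1)$ (placing $h$ in $L^{\tilde q}_x$ with $\tilde q\approx 2/\gamma$ and then interpolating down to $M_{\th,q}$ using conservation of $\|f\pel_0\|_{L^1_xL^1_\pel}$) and bootstraps on $\|K\|_{L^1_tL^{N+3}_x}$ rather than on the Strichartz norm itself, which has the minor advantage of treating $q=\infty$ uniformly instead of as the separate endpoint you flag.
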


\begin{proof}
Fix any small $\ep>0$.   We  will use the Strichartz estimates as in Section \ref{stricharz.sec} with 
$$
q_1=\frac{2(N+3)}{N+1-\ep}, \quad 
r_1=N+3, \quad 
q'_2=1, \quad r'_2=\frac{6(N+3)}{3N+13-\ep}.
$$ 
Then with Proposition \ref{kS.trivial.bd} we have
\bea
\| K_S \|_{L^{\frac{2(N+3)}{N+1-\ep}}_t([0,T_*);L^{N+3}_x)}^{N+3}
\ls  
\|K f \pel_0 \|_{L^1_t([0,T_*);L_x^{\frac{6(N+3)}{3N+13-\ep}}L^1_{\pel})}^{N+3}.\label{KS.3.1}
\eea
We use H\"older's inequality, for  $\gamma\in(0,1)$, and interpolation to obtain
\begin{multline}\label{KS.3.2}
\|K f \pel_0 \|_{L^1_t([0,T_*);L_x^{\frac{6(N+3)}{3N+13-\ep}}L^1_{\pel})}^{N+3}
\\
\ls 
\| K\|_{L^1_t ([0,T_*);L^{\frac{2(N+3)}{(N+3)-(N+1)\gamma}}_x)}^{N+3}\|f \pel_0 \|_{L^\infty_t([0,T_*);L_x^{\frac{6(N+3)}{4-\ep+3(N+1)\gamma}}L^1_{\pel})}^{N+3}
\\
\ls 
\| K\|_{L^\infty_t([0,T_*); L^2_x)}^{(N+3)(1-\gamma)}\| K\|_{L^1_t([0,T_*); L^{N+3}_x)}^{(N+3)\gamma}\|f \pel_0 \|_{L^{\infty}_t([0,T_*);L_x^{\frac{6(N+3)}{4-\ep+3(N+1)\gamma}}L^1_{\pel})}^{N+3}
\\
\ls 
\| K\|_{L^1_t([0,T_*); L^{N+3}_x)}^{(N+3)\gamma}\|f \pel_0 \|_{L^{\infty}_t([0,T_*);L_x^{\frac{6(N+3)}{4-\ep+3(N+1)\gamma}}L^1_{\pel})}^{N+3}.
\end{multline}
In the last step we also used the conservation law in Proposition \ref{cons.law.1}.

Applying Proposition \ref{prop.interpolation.2} with (for some arbitrarily small $\delta>0$)
$$
\rho=1, \quad 
\eta=\frac{1-\gamma}{N+3}(1-\delta),
\quad 
{\tilde{q}}=\frac{6(N+3)}{4-\ep+3(N+1)\gamma},
$$
we then have
\begin{equation*}
\|f \pel_0 \|_{L^{\infty}_t([0,T_*);L_x^{{\tilde{q}}}L^1_{\pel})}
\ls \|f \pel_0^\sigma \|_{L^{\infty}_t([0,T_*);L_x^{{\tilde{q}}}L^1_{\pel})}^{1-{\tilde{q}}\eta}  
\|f \pel_0^N \|_{L^{\infty}_t([0,T_*);L_x^1 L^1_{\pel})}^{\eta}
\end{equation*}
where
\begin{equation}
\sigma\ge \frac{1-\frac{(1-\gamma)(1-\delta)}{N+3}(N+3-\frac{18(N+3)}{4-\ep+3(N+1)\gamma})}{1-\frac{6(1-\gamma)(1-\delta)}{4-\ep+3(N+1)\gamma}}
\eqdef
A.
\label{condition}
\end{equation}
We observe that 
$$
{\tilde{q}} = \frac{2}{\gamma} + O(1/N),
\quad 
A = \gamma+\delta(1-\gamma)+ O(1/N),
$$
where $\delta>0$ will be chosen to be small.

Moreover, the condition \eqref{condition} can be written as
\begin{equation*}
\sigma \ge \gamma+ O(1/N) + O(\delta) =\frac 2{\tilde{q}}+O(1/N)+O(\delta).
\end{equation*}
Consider a fixed pair $(q,\th)$ from \eqref{main.assum.3D}.
Then for $N$ sufficiently large and $\delta$ sufficiently small, we can choose 
$\gamma \in (0,1)$ and $\sigma$ satisfying \eqref{condition} such that 
$\tilde{q} \in (2, q]$ and $\sigma<\theta$.  We can further guarantee that $0<\sigma \le 1$.
Then similar to \eqref{KT.3.2} we have
$$
\|f \pel_0^\sigma \|_{L^{\infty}_t([0,T_*);L_x^{{\tilde{q}}}L^1_{\pel})}^{1-{\tilde{q}}\eta}
\ls 
M_{\th,q}^{\beta'_0},
$$
where $\beta'_0>0$ is some explicitly computable constant.

Returning to \eqref{KS.3.1} and \eqref{KS.3.2}, we have thus shown that 
\begin{multline}\label{KS.3.3}
\| K_S \|_{L^{\frac{N+3}{N+1-\ep}}_t([0,T_*);L^{N+3}_x)}^{N+3}
\\
\ls 
\| K\|_{L^1_t([0,T_*); L^{N+3}_x)}^{(N+3)\gamma}
M_{\th,q}^{(N+3)\beta_0'}  
\|f \pel_0^N \|_{L^{\infty}_t([0,T_*);L_x^1 L^1_{\pel})}^{(1-\gamma)(1-\delta)}.
\end{multline}
Applying the Glassey-Strauss decomposition again, we have
\begin{equation*}
\| K\|_{L^1_t([0,T_*); L^{N+3}_x)}^{(N+3)\gamma}
\ls 
\| K_0\|_{L^1_t([0,T_*); L^{N+3}_x)}^{(N+3)\gamma}+\| K_T\|_{L^1_t([0,T_*); L^{N+3}_x)}^{(N+3)\gamma}+\| K_S\|_{L^1_t([0,T_*); L^{N+3}_x)}^{(N+3)\gamma}.
\end{equation*}
Since $K_0$ depends only on the initial data, we have
$$\| K_0\|_{L^1_t([0,T_*); L^{N+3}_x)}^{(N+3)\gamma}\ls 1.$$  
The $K_T$ term can be estimated using Proposition \ref{KT.3.prop}:
$$
\| K_T\|_{L^1_t([0,T_*); L^{N+3}_x)}^{(N+3)\gamma}\ls M_{\th,q}^{\gamma \beta} \|f \pel_0^N \|_{L^{\infty}_tL_x^1 L^1_{\pel}}^{\gamma \alpha}.
$$
Here $\alpha$, $\beta$ are from the statement of Proposition \ref{KT.3.prop}, where $\alpha\in(0,1)$.
Further using H{\"o}lder's inequality in $t$ we have  
\begin{equation*}
\| K_S\|_{L^1_t L^{N+3}_x}^{(N+3)\gamma}
\ls
\| K_S\|_{L^{\frac{N+3}{N+1-\ep}}_t L^{N+3}_x}^{(N+3)\gamma}.
\end{equation*}
Substituting these bounds into \eqref{KS.3.3}, we have
\begin{multline*}
\| K_S \|_{L^{\frac{N+3}{N+1-\ep}}_t([0,T_*);L^{N+3}_x)}^{N+3}
\\
\ls 
M_{\th,q}^{(N+3)\beta_0'}  
\|f \pel_0^N \|_{L^{\infty}_t([0,T_*);L_x^1 L^1_{\pel})}^{(1-\gamma)(1-\delta)}
\\
+
M_{\th,q}^{\gamma \beta+(N+3)\beta_0'}  
\|f \pel_0^N \|_{L^{\infty}_t([0,T_*);L_x^1 L^1_{\pel})}^{\gamma \alpha+(1-\gamma)(1-\delta)}
\\
+
\| K_S\|_{L^{\frac{N+3}{N+1-\ep}}_t([0,T_*); L^{N+3}_x)}^{(N+3)\gamma}
M_{\th,q}^{(N+3)\beta_0'}  
\|f \pel_0^N \|_{L^{\infty}_t([0,T_*);L_x^1 L^1_{\pel})}^{(1-\gamma)(1-\delta)}.
\end{multline*}
To finish the argument we use that $\gamma \in (0,1)$ and we apply Young's inequality to the last term.  For any small $\mu>0$ we have
$$
\| K_S\|_{L^{\frac{N+3}{N+1-\ep}}_t([0,T_*); L^{N+3}_x)}^{(N+3)\gamma}
B
\leq \mu \| K_S\|_{L^{\frac{N+3}{N+1-\ep}}_t([0,T_*); L^{N+3}_x)}^{(N+3)}+\mu^{-1}B^{\frac{1}{1-\gamma}}.
$$
Here for simplicity here we define $B$ to be the rest of the terms which multiply the term $\| K_S\|_{L^{\frac{N+3}{N+1-\ep}}_t([0,T_*); L^{N+3}_x)}^{(N+3)\gamma}$ just above.  We thereby obtain the estimate
$$
\| K_S\|_{L^{\frac{N+3}{N+1-\ep}}_t([0,T_*);L^{N+3}_x)}^{N+3} 
\ls 
1+M_{\th,q}^{\beta'}\| f\pel_0^N\|_{L^\infty_t ([0,T_*);L^1_x L^1_\pel)}^{\alpha'}.$$
The conclusion of the proposition follows after we apply the H\"older's inequality in time on the lower bound above.
\end{proof}

These bounds of the electromagnetic fields imply that we can control sufficiently high moments for $f$:

\begin{proposition}\label{propagation.moment}
Suppose that
$M_{\th,q}\eqdef \|f \pel_0^\th\|_{L^\infty_t([0,T_*);L^q_xL^1_\pel)}\ls 1$
for some fixed pair $(q,\th)$ satisfying \eqref{main.assum.3D}.
Then for $N$ sufficiently large depending on  $(q,\th)$, we have
$$\| f\pel_0^N\|_{L^\infty_t([0,T_*); L^1_x L^1_\pel)}\ls 1.$$
\end{proposition}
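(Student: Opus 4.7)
The plan is to combine the moment estimate of Proposition \ref{prop.moment} with the bounds on the decomposed electromagnetic field coming from Propositions \ref{KT.3.prop} and \ref{KS.3.prop}, and close the estimate via a self-improving inequality. Throughout we fix $(q,\th)$ satisfying \eqref{main.assum.3D}, and we choose $N$ sufficiently large (depending on $(q,\th)$) so that both Propositions \ref{KT.3.prop} and \ref{KS.3.prop} are applicable. By assumption \eqref{ini.bd.2} (or its variant mentioned in the remark after Theorem \ref{main.theorem.2}), $\|f_0 \pel_0^N\|_{L^1_xL^1_\pel} \ls 1$.

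First, I apply Proposition \ref{prop.moment} to obtain
\[
\|f\pel_0^N\|_{L^\infty_t([0,T_*);L^1_xL^1_\pel)} \ls 1 + \|E\|_{L^1_t([0,T_*);L^{N+3}_x)}^{N+3} + \|B\|_{L^1_t([0,T_*);L^{N+3}_x)}^{N+3}.
\]
Next, I use the Glassey-Strauss decomposition $K = (K)_0 + K_T + K_S$ from Section \ref{sec.dec.K}. The initial data contribution $(K)_0$ is bounded in $L^1_t L^{N+3}_x$ by the initial data norms \eqref{ini.bd.2.5}--\eqref{ini.bd.6} (for instance via the explicit representation and Sobolev embedding). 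For $K_T$ and $K_S$, I invoke Propositions \ref{KT.3.prop} and \ref{KS.3.prop} respectively to obtain, for $N$ sufficiently large,
\[
\|K_T\|_{L^1_t([0,T_*);L^{N+3}_x)}^{N+3} + \|K_S\|_{L^1_t([0,T_*);L^{N+3}_x)}^{N+3} \ls 1 + M_{\th,q}^{\beta''}\|f\pel_0^N\|_{L^\infty_t([0,T_*);L^1_xL^1_\pel)}^{\alpha''},
\]
for some explicitly computable $\alpha'' \in (0,1)$ and $\beta''>0$ (taking $\alpha''=\max(\alpha,\alpha')$ and $\beta''=\max(\beta,\beta')$ from the two propositions, with the appropriate adjustments).

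Combining the two displays above and using the hypothesis $M_{\th,q} \ls 1$ yields
\[
\|f\pel_0^N\|_{L^\infty_t([0,T_*);L^1_xL^1_\pel)} \ls 1 + \|f\pel_0^N\|_{L^\infty_t([0,T_*);L^1_xL^1_\pel)}^{\alpha''}.
\]
Since $\alpha'' \in (0,1)$, a standard application of Young's inequality (or a direct argument: if $X \leq C(1 + X^{\alpha''})$ with $\alpha''<1$, then $X \leq C'$) yields the desired bound
\[
\|f\pel_0^N\|_{L^\infty_t([0,T_*);L^1_xL^1_\pel)} \ls 1.
\]

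The main obstacle will be the bookkeeping: ensuring that the value of $N$ (and the choice of the auxiliary parameters $\ep,\delta,\gamma$ hidden in Propositions \ref{KT.3.prop} and \ref{KS.3.prop}) can be simultaneously chosen so that both propositions produce exponents $\alpha,\alpha'$ strictly less than $1$ on $\|f\pel_0^N\|_{L^\infty_t L^1_xL^1_\pel}$. This has already been built into the statements of those propositions, so at the level of this proof the argument reduces to the self-improving inequality above. No additional analysis is needed beyond verifying that the single threshold $N$ on which both propositions apply can be chosen in terms of $(\th,q)$.
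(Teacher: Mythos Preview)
Your proposal is correct and follows essentially the same approach as the paper: apply the moment estimate (Proposition \ref{prop.moment}), bound $\|K\|_{L^1_tL^{N+3}_x}^{N+3}$ via the Glassey--Strauss decomposition together with Propositions \ref{KT.3.prop} and \ref{KS.3.prop}, and close by the self-improving inequality $X\ls 1+X^{\alpha''}$ with $\alpha''<1$. Your write-up is in fact slightly more explicit than the paper's in separating out the data term $(K)_0$ and in justifying how to take a single exponent $\alpha''=\max(\alpha,\alpha')$.
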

\begin{proof}
By Proposition \ref{prop.moment}, we have
\begin{multline*}
\| \pel_0^N f \|_{L^\infty_t([0,T_*); L^1_x L^1_\pel)}
\ls 
\| \pel_0^N f_0 \|_{L^1_x L^1_\pel}+\| E\|_{L^1_t([0,T_*); L^{N+3}_x)}^{N+3}+\| B\|_{L^1_t([0,T_*); L^{N+3}_x)}^{N+3}
\\
\ls  1+\| \pel_0^N f \|_{L^\infty_t([0,T_*); L^1_x L^1_\pel)}^{\alpha}
\end{multline*}
for some $\alpha<1$.  Here we also used Propositions \ref{KT.3.prop} and \ref{KS.3.prop} and the main assumption of Proposition \ref{propagation.moment}. The proof of the proposition can be concluded after a standard application of Young's inequality.
\end{proof}

\subsection{Conclusion of the proof}
According to Theorem \ref{main.theorem.1}, in order to show that a solution can be continued, we need to bound the integral of $K$ over all characteristics. To this end, we need to apply the following slight improvement\footnote{The original Lemma 1.3 in \cite{Pallard} requires $\|g\|_{L^\infty_t([0,T_*);L^4_x)}$ on the right hand side.} of Lemma 1.3 in Pallard \cite{Pallard}:

\begin{proposition}[Pallard \cite{Pallard}]
Let $X(t):\mathbb R_t \to \mathbb R^3$ be a $C^1$ function  with $|X'(t)|< 1$ and define
$$
I_i(t,x;g) \eqdef \int_0^t ds'  \int_{C_{s',x}} d\sigma(s,\omega)  ~ \frac{g(s,X(s')+(s'-s)\omega)}{(s'-s)^{i+1}},
\quad
(i=0,1).
$$
Then the following estimate holds:
$$\sup_{t\in [0,T_*),x\in\mathbb R^3} (|I_0(t,x;g)|+|I_1(t,x;g)|)\ls \|g\|_{L^1_t([0,T_*);L^4_x)}.$$
\end{proposition}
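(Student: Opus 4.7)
My plan is to convert the cone integral into a standard spatial integral via Fubini, then apply H{\"o}lder's inequality in the spatial variable with exponents $(4, 4/3)$ and exploit a spherical cancellation to bound the singular kernel.

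\textbf{Reduction to a spatial integral.} I parametrize the backward light cone from the apex $(s', X(s'))$ by $(s, \omega) \in [0, s'] \times S^2$ via $y = X(s') + (s'-s)\omega$, so that the surface measure is $d\sigma = (s'-s)^2 \sin\theta\, d\theta\, d\phi\, ds$. Swapping the order of integration in $s$ and $s'$ via Fubini and setting $r = s' - s$,
\begin{equation*}
I_i(t, x; g) = \int_0^t ds \int_0^{t-s} dr \int_{S^2} r^{1-i}\, g(s, X(s+r) + r\omega) \, d\omega.
\end{equation*}
For each fixed $s$ I change from $(r, \omega)$ to $y = X(s+r) + r\omega$, with Jacobian $dy = r^2(1 + \omega \cdot X'(s+r))\, dr\, d\omega$. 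The hypothesis $|X'| < 1$ guarantees that this map is injective on $[0, t-s] \times S^2$: if $(r_1, \omega_1) \neq (r_2, \omega_2)$ produced the same image then WLOG $r_1 > r_2$, and the identity $X(s+r_1) - X(s+r_2) = r_2 \omega_2 - r_1 \omega_1$ combined with $|r_2 \omega_2 - r_1 \omega_1| \ge r_1 - r_2$ would contradict the Lipschitz bound. It also ensures that $1 + \omega \cdot X' \ge 1 - \sup_s|X'(s)| > 0$ is bounded away from zero.

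\textbf{H{\"o}lder and radial estimate.} Writing $D(s,t) \subset \mathbb{R}^3$ for the image and $r(s,y)$, $\omega(s,y)$, $s'(s,y) = s + r(s,y)$ for the inverse functions, the previous step gives
\begin{equation*}
I_i(t, x; g) = \int_0^t ds \int_{D(s,t)} \frac{g(s, y)}{r(s, y)^{1+i}\,(1 + \omega(s,y) \cdot X'(s'(s, y)))}\, dy.
\end{equation*}
The factor $(1 + \omega \cdot X')^{-1}$ is bounded by a constant depending on $\sup_s |X'(s)|$, which I absorb into the implicit constant. Applying H{\"o}lder in $y$ with exponents $(4, 4/3)$ for each fixed $s$ yields the pointwise bound $\|g(s, \cdot)\|_{L^4_y} \cdot \|r^{-(1+i)}\|_{L^{4/3}_y(D(s,t))}$. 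Changing back to $(r, \omega)$ to evaluate the $L^{4/3}$ norm and exploiting the spherical cancellation $\int_{S^2} \omega\, d\omega = 0$,
\begin{equation*}
\|r^{-(1+i)}\|_{L^{4/3}_y(D(s,t))}^{4/3} = \int_0^{t-s} r^{2 - 4(1+i)/3} \int_{S^2} (1 + \omega \cdot X'(s+r))\, d\omega\, dr = 4\pi \int_0^{t-s} r^{2 - 4(1+i)/3}\, dr.
\end{equation*}

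\textbf{Conclusion.} The exponent $2 - 4(1+i)/3$ equals $2/3$ when $i=0$ and $-2/3$ when $i=1$; both strictly exceed $-1$, so the radial integral converges at $r=0$ and is bounded by a multiple of $(t-s)^{3 - 4(1+i)/3}$. Hence $\|r^{-(1+i)}\|_{L^{4/3}_y(D(s,t))} \ls (t-s)^{9/4 - (1+i)}$, i.e., $(t-s)^{5/4}$ for $i=0$ and $(t-s)^{1/4}$ for $i=1$, each with positive exponent. Combining and bounding $(t-s)^{9/4-(1+i)} \le T_*^{9/4-(1+i)}$ uniformly in $s \in [0, t)$,
\begin{equation*}
|I_i(t, x; g)| \ls \int_0^t \|g(s, \cdot)\|_{L^4_y}\, ds = \|g\|_{L^1_t([0, T_*); L^4_x)},
\end{equation*}
uniformly in $(t, x)$. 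The key technical step is the spherical cancellation $\int_{S^2} \omega\, d\omega = 0$, which eliminates all dependence of the radial integrand on the direction of $X'$ and makes it integrable at $r=0$; the slight improvement from $L^\infty_t L^4_x$ to $L^1_t L^4_x$ then comes simply from pulling the (uniformly bounded) factor $(t-s)^{9/4-(1+i)}$ out of the $s$-integral pointwise instead of integrating it against a constant $\|g\|_{L^\infty_t L^4_x}$.
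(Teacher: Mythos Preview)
Your proof follows the same route as the paper---Fubini to swap $s,s'$, the change of variables $(r,\omega)\mapsto y=X(s+r)+r\omega$, then H\"older in $y$ with exponents $(4,4/3)$---and the arithmetic is correct. One difference matters, however: you discard the Jacobian factor $(1+\omega\cdot X')^{-1}$ via the pointwise bound $(1-\sup_s|X'(s)|)^{-1}$ \emph{before} applying H\"older. The paper instead carries the full Jacobian through the H\"older step, so that the kernel integral contains $|J_\pi|^{-1/3}$; this leaves an angular factor $\int_{S^2}(1+\omega\cdot X')^{-1/3}\,d\omega$, which is bounded uniformly for all $|X'|<1$ (the singularity at $\omega\cdot X'=-1$ is integrable even as $|X'|\to 1$). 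Consequently your implicit constant depends on $(1-\sup_s|X'(s)|)^{-1}$ while the paper's does not. For a single $C^1$ curve on the compact interval $[0,T_*]$ this is harmless, but in the very next proposition the estimate is applied uniformly over \emph{all} characteristics $X(s;t,x,\pel)$, whose speeds $|\hat V|$ can lie arbitrarily close to $1$; there your bound would degenerate. The fix is immediate: skip the crude pointwise bound and apply H\"older directly to the product $g(s,y)\cdot r^{-(1+i)}(1+\omega\cdot X')^{-1}$, yielding the kernel $\int r^{2-4(1+i)/3}(1+\omega\cdot X')^{-1/3}\,dr\,d\omega$ as in the paper.

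A minor remark: the ``spherical cancellation'' $\int_{S^2}\omega\,d\omega=0$ that you call the key technical step is not actually doing any work. Integrability at $r=0$ comes entirely from the power $2-4(1+i)/3>-1$, and $\int_{S^2}(1+\omega\cdot X')\,d\omega\le 8\pi$ would hold regardless; the cancellation just gives the exact value $4\pi$.
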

\begin{proof}
By Fubini's theorem, we have for $i=0,1$ that
\begin{equation*}
\begin{split}
I_i(t;g)= &\int_0^t ds'\, \int_0^{s'} ds\, \int_0^{2\pi} d\phi\, \int_0^{\pi} (s'-s)^{1-i}\sin\th\, d\th\, g(s,X(s')+(s'-s)\omega)\\
=& \int_0^{t} ds\, \int_s^t ds'\,\int_0^{2\pi} d\phi\, \int_0^{\pi} (s'-s)^{1-i}\sin\th\, d\th\, g(s,X(s')+(s'-s)\omega).
\end{split}
\end{equation*}
To proceed, we define
\begin{equation*}
\tilde{I}_{j;s,t} 
\eqdef 
\int_s^t ds'\,\int_0^{2\pi} d\phi\, \int_0^{\pi} (s'-s)^{2-j}\sin\th\, d\th\, g(s,X(s')+(s'-s)\omega),
\quad (j=1,2).
\end{equation*}
By Lemma 2.1 in \cite{Pallard} we see that the map $\pi \eqdef X(s')+(s'-s)\omega$ is a $C^1_{s',\th,\phi}$ diffeomorphism with the Jacobian given by $J_\pi = (X'(s)\cdot \omega +1)(s'-s)^2\sin\theta$.  Then using this change of variable and H\"older's inequality, we have
\begin{multline*}
\notag
\tilde{I}_{j;s,t} 
\ls 
\| g(s,\cdot)\|_{L^4(\mathbb{R}^3)}
\left( \int_s^t ds'\,\int_0^{2\pi} d\phi\, \int_0^{\pi}d\th ~ \frac{(s'-s)^{(2-j)\frac 43}\sin^{4/3}\th}{\left| J_\pi \right|^{\frac 13}}  \right)^{\frac 34} \\
\ls 
\| g(s,\cdot)\|_{L^4(\mathbb{R}^3)}.
\end{multline*}
Since we have
$$\int_s^t ds'\,\int_0^{2\pi} d\phi\, \int_0^{\pi}d\th ~ \frac{(s'-s)^{(2-j)\frac 43}\sin^{4/3}\th}{\left| J_\pi \right|^{\frac 13}}\ls 1.$$
The desired conclusion follows after an integration in $s$.
\end{proof}

Combining this estimate with the estimate in\footnote{Here, we are using the bounds $\frac{1}{\pel_0^2(1+\vh\cdot\om)^{\frac 32}}\ls \pel_0$ and $\frac{1}{\pel_0(1+\vh\cdot\om)}\ls \pel_0$.} Proposition \ref{T.prop}, we obtain

\begin{proposition}\label{Pallard.prop}  The integral of $K$ over any characteristic can be bounded by
\begin{multline}\notag
\sup_{t\in [0,T_*),x,\pel\in\mathbb R^3} \int_0^{T_*} ds |K(s,X(s;t,x,\pel ))|
\\
\ls 1+\|Kf\pel_0 \|_{L^1_t([0,T_*);L^4_x L^1_{\pel})}+\|f\pel_0 \|_{L^1_t([0,T_*);L^4_x L^1_{\pel})}.
\end{multline}
\end{proposition}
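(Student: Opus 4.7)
The plan is to reduce the bound to direct applications of the preceding Pallard lemma after using Proposition \ref{T.prop} together with the Glassey--Strauss decomposition $4\pi K = (K)_0 + K_T + K_S$.

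First, I would use Proposition \ref{T.prop} and the pointwise estimates $\frac{1}{\pel_0^2(1+\vh\cdot\om)^{3/2}}\ls \pel_0$ and $\frac{1}{\pel_0(1+\vh\cdot\om)}\ls \pel_0$ (as indicated in the footnote preceding the statement, and justified via \eqref{sing.est.3D}) to obtain the simplified pointwise bounds
\begin{equation*}
|K_T(t,x)|\ls \int_{C_{t,x}}\int_{\Rt}\frac{\pel_0\, f(s,y,\pel)}{(t-s)^2}\,d\pel\,d\sigma,\qquad |K_S(t,x)|\ls \int_{C_{t,x}}\int_{\Rt}\frac{\pel_0\,(|K|f)(s,y,\pel)}{t-s}\,d\pel\,d\sigma.
\end{equation*}

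Next, fix $(t,x,\pel)$ and let $s'\mapsto X(s')=X(s';t,x,\pel)$ denote the associated characteristic. Because $\frac{dX}{ds'}=\hat V(s')$ with $|\hat V|<1$, the curve $X$ satisfies the hypothesis of the preceding Pallard lemma. Evaluate the two pointwise bounds at $x=X(s')$ and integrate in $s'\in[0,T_*)$; interchanging the order of integration (Fubini), the right-hand sides take exactly the form of $I_1$ and $I_0$ applied to the functions $g_T(s,y)\eqdef \int_{\Rt}\pel_0 f(s,y,\pel)\,d\pel$ and $g_S(s,y)\eqdef \int_{\Rt}\pel_0(|K|f)(s,y,\pel)\,d\pel$, respectively. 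Hence the preceding Pallard lemma yields
\begin{equation*}
\int_0^{T_*}\!\!|K_T(s',X(s'))|\,ds'\ls \|\pel_0 f\|_{L^1_t([0,T_*);L^4_x L^1_{\pel})},\qquad \int_0^{T_*}\!\!|K_S(s',X(s'))|\,ds'\ls \|\pel_0|K|f\|_{L^1_t([0,T_*);L^4_x L^1_{\pel})}.
\end{equation*}

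Finally, the initial-data piece $(K)_0$ must be handled separately: by the representation formulas recalled in the footnote to Section \ref{sec.dec.K}, $(K)_0(s',\cdot)$ is controlled pointwise in $L^\infty_x$ by spherical means of $f_0$, $K_0$ and their first derivatives, which by Sobolev embedding together with \eqref{ini.bd.3}--\eqref{ini.bd.6} are bounded uniformly in $s'\in[0,T_*)$. Integrating this uniform $L^\infty$ bound in $s'$ over the bounded interval $[0,T_*)$ produces the constant $1$ on the right-hand side of the claim. Summing the three contributions and taking the supremum over $(t,x,\pel)$ proves the proposition.

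I do not anticipate a substantial obstacle: given Proposition \ref{T.prop} and the refined Pallard lemma just established, the only bookkeeping is matching the kernels to $I_0$ and $I_1$ and verifying $|X'|<1$, both of which are immediate. The sole delicate point is the verification that $(K)_0$ along $X(s')$ is integrable in $s'$, which follows routinely from the initial-data assumptions listed in Theorem \ref{main.theorem.1}.
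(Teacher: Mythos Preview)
Your proposal is correct and follows essentially the same approach as the paper: use the Glassey--Strauss decomposition, simplify the kernels in Proposition \ref{T.prop} via the pointwise bounds $\frac{1}{\pel_0^2(1+\vh\cdot\om)^{3/2}}\ls \pel_0$ and $\frac{1}{\pel_0(1+\vh\cdot\om)}\ls \pel_0$, and apply the preceding Pallard lemma with $g_T=\int \pel_0 f\,d\pel$ and $g_S=\int \pel_0|K|f\,d\pel$. The paper's proof is a single sentence with a footnote indicating exactly these kernel bounds; your write-up simply supplies the routine details (matching $K_T$ to $I_1$ and $K_S$ to $I_0$, checking $|X'|<1$, and handling $(K)_0$).
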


This allows us to bound the integral of $K$ over all characteristics:

\begin{proposition}\label{KLinftybd}
The following estimate for $K$ holds:
\begin{equation*}
\sup_{t\in [0,T_*),x,\pel\in\mathbb R^3} \int_0^{T_*} ds |K(s,X(s;t,x,\pel ))|
\ls 1.
\end{equation*}
\end{proposition}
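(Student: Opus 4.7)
The plan is to combine Proposition \ref{Pallard.prop} with the propagation of moments result (Proposition \ref{propagation.moment}) and the Strichartz-type bounds for $K_T, K_S$ in Propositions \ref{KT.3.prop} and \ref{KS.3.prop}. More precisely, by Proposition \ref{Pallard.prop} it suffices to bound $\|f\pel_0\|_{L^1_t([0,T_*);L^4_xL^1_\pel)}$ and $\|Kf\pel_0\|_{L^1_t([0,T_*);L^4_xL^1_\pel)}$. Since $T_*<\infty$, the first quantity reduces (after a trivial time H\"older) to a fixed-time $L^4_xL^1_\pel$ bound, and the key observation is that Proposition \ref{propagation.moment} provides $\|\pel_0^N f\|_{L^\infty_t([0,T_*);L^1_xL^1_\pel)}\ls 1$ for any sufficiently large $N$, for the pair $(\theta,q)$ in the assumption of Theorem \ref{main.theorem.2}.

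First I would handle the second term. Invoking Proposition \ref{prop.interpolation} with $M=N$ and $S=\tfrac{N-9}{4}$ gives $\|f\pel_0^{(N-9)/4}\|_{L^\infty_t L^4_xL^1_\pel}\ls \|\pel_0^N f\|_{L^\infty_t L^1_xL^1_\pel}^{1/4}\ls 1$. For $N\ge 13$ the weight $(N-9)/4\ge 1$ so that the pointwise bound $\pel_0\le \pel_0^{(N-9)/4}$ yields $\|f\pel_0\|_{L^\infty_tL^4_xL^1_\pel}\ls 1$, and hence $\|f\pel_0\|_{L^1_tL^4_xL^1_\pel}\ls 1$.

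Next I would bound $\|Kf\pel_0\|_{L^1_tL^4_xL^1_\pel}$ by applying H\"older in $x$ at fixed $t$ with exponents $r_1=N+3$ and $r_2=\tfrac{4(N+3)}{N-1}$, so that $\tfrac 14=\tfrac{1}{r_1}+\tfrac{1}{r_2}$. This yields
\[
\|Kf\pel_0\|_{L^1_tL^4_xL^1_\pel}\ls \|K\|_{L^1_t([0,T_*);L^{N+3}_x)}\|f\pel_0\|_{L^\infty_t([0,T_*);L^{r_2}_xL^1_\pel)}.
\]
The second factor is controlled exactly as in the previous step: choose $N$ large enough that $S:=\tfrac{N+3}{r_2}-3=\tfrac{N-13}{4}\ge 1$; then Proposition \ref{prop.interpolation} with $M=N$ and this $S$, together with $\pel_0\le\pel_0^S$, gives $\|f\pel_0\|_{L^\infty_tL^{r_2}_xL^1_\pel}\ls 1$. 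For the first factor I would use the Glassey--Strauss splitting $K=(K)_0+K_T+K_S$, control $\|(K)_0\|_{L^1_tL^{N+3}_x}$ by initial data and Sobolev embedding, and apply Propositions \ref{KT.3.prop} and \ref{KS.3.prop}; since Proposition \ref{propagation.moment} has already given $\|\pel_0^N f\|_{L^\infty_tL^1_xL^1_\pel}\ls 1$ and $M_{\theta,q}\ls 1$ is the assumption, both $\|K_T\|_{L^1_tL^{N+3}_x}$ and $\|K_S\|_{L^1_tL^{N+3}_x}$ are $\ls 1$.

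The main obstacle I anticipate is purely bookkeeping: I must verify that a single large $N$ can be chosen for which (i) Proposition \ref{propagation.moment} applies for the given $(\theta,q)$, (ii) the interpolation exponent $(N-13)/4$ is $\ge 1$, and (iii) the bounds of Propositions \ref{KT.3.prop} and \ref{KS.3.prop} are nontrivial. All three requirements are compatible since each is satisfied for all $N$ sufficiently large, so there is no genuine conflict. Combining these three estimates with Proposition \ref{Pallard.prop} concludes the proof.
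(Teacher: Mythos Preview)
Your proposal is correct and follows the same overall strategy as the paper: use Proposition \ref{propagation.moment} to control high moments, interpolate via Proposition \ref{prop.interpolation} to bound the $L^4_xL^1_\pel$ norms of $f\pel_0$, and combine the Glassey--Strauss splitting with Propositions \ref{KT.3.prop} and \ref{KS.3.prop} to control $K$, then feed everything into Proposition \ref{Pallard.prop}.

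The only substantive difference is in how you split the product $Kf\pel_0$. The paper applies H\"older in $x$ with exponents $8$ and $8$, and therefore needs an $L^8_x$ bound on $K$; it obtains this by interpolating between the conserved $L^2_x$ norm (Proposition \ref{cons.law.1}) and the $L^{N+3}_x$ bound coming from Propositions \ref{KT.3.prop}--\ref{KS.3.prop}. You instead split with exponents $N+3$ and $\tfrac{4(N+3)}{N-1}$, which lets you use the $L^{N+3}_x$ bound on $K$ directly and avoids invoking the $L^2$ conservation law for $K$ at this step. Your route is marginally more direct; the paper's route has the minor advantage of keeping the moment-interpolation exponents fixed ($L^4$ and $L^8$) rather than $N$-dependent. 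Either way the bookkeeping closes for all sufficiently large $N$, as you observe.
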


\begin{proof}
First, by Proposition \ref{propagation.moment}, we have $\| f\pel_0^N\|_{L^\infty_t([0,T_*); L^1_x L^1_\pel)}\ls 1$ for any sufficiently large $N$. Then, by Proposition \ref{prop.interpolation}, we have
\begin{equation}\label{final.1}
\|f \pel_0\|_{L^\infty_t([0,T_*); L^4_x L^1_\pel)}+\|f \pel_0\|_{L^\infty_t([0,T_*); L^8_x L^1_\pel)}\ls 1 .
\end{equation}
Moreover, substituting the bound derived in Proposition \ref{propagation.moment} into the estimates in Propositions \ref{KT.3.prop} and \ref{KS.3.prop}, and using the conservation law in Proposition \ref{cons.law.1}, we have
$$\|K\|_{L^1_t([0,T_*);L^8_x)}\ls \|K\|_{L^\infty_t([0,T_*);L^2_x)}^{\frac{N-5}{4(N+1)}}\|K\|_{L^1_t([0,T_*);L^{N+3}_x)}^{\frac{3N+9}{4(N+1)}}\ls 1.$$
Therefore, 
\begin{equation}\label{final.2}
\|Kf\pel_0 \|_{L^1_t([0,T_*);L^4_x L^1_{\pel})}\ls \|K\|_{L^1_t([0,T_*);L^8_x)}\|f \pel_0\|_{L^\infty_t([0,T_*); L^8_x L^1_\pel)}\ls 1.
\end{equation}
By \eqref{final.1}, \eqref{final.2} and Proposition \ref{Pallard.prop}, we obtain
the desired estimate.
\end{proof}
With these estimates 
Theorem \ref{main.theorem.2} therefore follows from Theorem \ref{main.theorem.1}.   \hfill {\bf Q.E.D.}


\begin{thebibliography}{99} 

\bibitem{AI} R. Sospedra-Alfonso and R. Illner. \textit{Classical solvability of the relativistic Vlasov - Maxwell system with bounded spatial density}, Math. Methods Appl. Sci. 33(6) (2010), 751-757.

\bibitem{BGP} F. Bouchut, F. Golse and C. Pallard, \textit{Classical solutions and the Glassey-Strauss theorem for the 3D Vlasov-Maxwell system}, Arch. Ration. Mech. Anal. 170 (2003), 1-15.

\bibitem{GSnearneutral} R. Glassey and J. Schaeffer, \textit{Global existence for the relativistic Vlasov-Maxwell system with nearly neutral initial data}, Comm. Math. Phys. 119 (1988), 353-384.

\bibitem{GS2.5D} R. Glassey and J. Schaeffer, \textit{The two and one half dimensional relativistic Vlasov Maxwell system}, Commun. Math. Phys. 185 (1997), 257-284.

\bibitem{GS2D1} R. Glassey and J. Schaeffer, \textit{The relativistic Vlasov-Maxwell system
in two space dimensions: Part I}, Arch. Rational Mech. Anal. 141 (1998) 331-354.

\bibitem{GS2D2} R. Glassey and J. Schaeffer, \textit{The relativistic Vlasov-Maxwell system
in two space dimensions: Part II}, Arch. Rational Mech. Anal. 141 (1998) 355-374.

\bibitem{GS86} R. Glassey and W. Strauss, \textit{Singularity formation in a collisionless plasma could
only occur at large velocities}, Arch. Rat. Mech. Anal. 92 (1986), 59-90.

\bibitem{GSdilute} R. Glassey and W. Strauss, \textit{Absence of shocks in an initially dilute collisionless plasma}, Comm. Math. Phys. 113
(1987), 191-208

\bibitem{GS87} R. Glassey and W. Strauss, \textit{High velocity particles in a collisionless plasma}, Math. Meth.
in the Appl. Sci. 9 (1987), 46-52.

\bibitem{GS87.2} R. Glassey and W. Strauss, \textit{Large velocities in the relativistic Vlasov-Maxwell equations}, J. Fac. Sci. Univ. Tokyo 36 Sec IA Math (1989), 615-527.

\bibitem{Kapitanski} L. V. Kapitanski, \textit{Some generalizations of the Strichartz-Brenner inequality}, Leningrad Math J. 1 (1990), 693-726.

\bibitem{KeelTao} M. Keel and T. Tao, \textit{Endpoint {S}trichartz estimates}, Amer. J. Math 120 (1998), 955-980.

\bibitem{KS} S. Klainerman and G. Staffilani, \textit{A new approach to study the Vlasov-Maxwell system}, Commun. Pure Appl. Anal. 1 (2002), 103-125.



\bibitem{LS} J. Luk and R. Strain, \textit{A new continuation criterion for the Vlasov-Maxwell system}, Commun. Math. Phys., to appear.

\bibitem{LSSE} J. Luk and R. Strain, \textit{Strichartz estimates and moment bounds for the Vlasov-Maxwell system I. The $2$-D and $2\frac 12$-D cases}, preprint.

\bibitem{LP} P.-L. Lions and B. Perthame, \textit{Propagation of moments and regularity for the $3$-dimensional {V}lasov-{P}oisson system}, Invent. Math 105 (1991), 415-430.

\bibitem{Pallard} C. Pallard, \textit{On the boundedness of the momentum support of solutions to the relativistic
Vlasov-Maxwell system}, Indiana Univ. Math. J. 54(5) (2005), 1395-1409.

\bibitem{Pfaffelmoser} K. Pfaffermoser, \textit{Global classical solutions of the {V}lasov-{P}oisson system in three dimensions for general initial data}, J. Diff. Equations 95 (1992), 281-303.

\bibitem{Rein} G. Rein, \textit{Generic global solutions of the relativistic Vlasov-Maxwell system of plasma physics}, Comm. Math. Phys. 135 (1990), 41-78.

\bibitem{Schaeffer} J. Schaeffer, \textit{Global existence of smooth solutions to the Vlasov-Poisson system in three dimensions}, Comm. P.D.E. 
16(8\&9) (1991), 1313-1335.

\bibitem{S} J. Schaeffer, \textit{A small data theorem for collisionless plasma that includes high velocity particles}, Indiana Univ. Math. J. 53(1) (2004), 1-34.

\bibitem{Strichartz}R. S. Strichartz, \textit{Restriction of {F}ourier transform to quadratic surfaces and decay of solutions of wave equations}, Duke Math J. 44 (1977), 705-774.

\end{thebibliography}
\end{document}